\newtheorem{thm}{Theorem}[section]
\crefname{thm}{Theorem}{Theorems}
\newtheorem{cor}[thm]{Corollary}
\crefname{cor}{Corollary}{Corollaries}
\newtheorem{lem}[thm]{Lemma}
\crefname{lem}{Lemma}{Lemmas}
\newtheorem{clm}[thm]{Claim}
\newcommand\Z{\mathbb{Z}}
\newcommand\N{\mathbb{N}}
\newcommand\R{\mathbb{R}}
\newcommand\T{\mathbb{T}}
\newcommand\E{\mathbb{E}}
\newcommand\Prb{\mathbb{P}}
\newcommand\eps{\varepsilon}
\newcommand\sdiff{\mathbin{\triangle}}
\newcommand\id{\mathbbm{1}}
\newcommand\medcup{\mathbin{\scalebox{0.95}{\ensuremath{\bigcup}}}} % medium size \cup
\newcommand\medcap{\mathbin{\scalebox{0.95}{\ensuremath{\bigcap}}}} % medium size \cap
\title{Random Translates in Minkowski Sums}
\author{Paul Balister \and B\'ela Bollob\'as \and Imre Leader \and Marius Tiba}
\address{Mathematical Institute, University of Oxford, Oxford OX2\thinspace6GG, UK.}
\email{paul.balister@maths.ox.ac.uk}
\address{Department of Pure Mathematics and Mathematical Statistics,
Wilberforce Road, Cambridge, CB3\thinspace0WA, UK, and Department of Mathematical Sciences,
University of Memphis, Memphis, TN\thinspace38152, USA.}
\email{b.bollobas@dpmms.cam.ac.uk}
\address{Department of Pure Mathematics and Mathematical Statistics,
Wilberforce Road, Cambridge, CB3\thinspace0WA, UK.}
\email{i.leader@dpmms.cam.ac.uk}
\address{Mathematical Institute, University of Oxford, Oxford OX2\thinspace6GG, UK.}
\email{marius.tiba@maths.ox.ac.uk}
\thanks{PB was partially supported by EPSRC grant EP/W015404/1.
BB was partially supported by NSF grant DMS-1855745.}
\date{\today}
\begin{document}

\maketitle

\begin{abstract}
Suppose that $A$ and $B$ are sets in~$\R^d$, and we form the sumset of $A$ with $n$
random points of~$B$. Given the volumes of $A$ and~$B$, how should we choose them to
minimize the expected volume of this sumset? Our aim in this paper is to show that we
should take $A$ and $B$ to be Euclidean balls. We also consider the analogous question
in the torus~$\T^d$, and we show that in this case the optimal choices of $A$ and $B$
are bands, in other words, sets of the form $[x,y]\times\T^{d-1}$. We also give stability versions of our results. 
\end{abstract}

\section{Introduction}

Let $A$ and $B$ be sets of finite positive measure in~$\R^d$. The fundamental 
Brunn--Minkowski inequality (see for example~\cite{Bar,Gard}) asserts that the volume 
$|A+B|$ of the sumset
$A+B$ is at least $(|A|^{1/d}+|B|^{1/d})^d$.
Note that we may obtain equality for any convex
body~$A$ by setting $B$ to be a homothetic copy of~$A$. But suppose we try to
`approximate' $B$ by a finite set -- in the sense that,  for a fixed value
of~$n$, we form the sumset $A+S$, where $S$ consists of $n$ points chosen independently
and uniformly at random from~$B$. Given their volumes, how should we choose $A$ and~$B$ 
so as to minimize the expected size of $A+S$?

Note that now there is no reason at all to believe that this minimum may be attained
using any given (convex) choice of $A$ by a suitably related choice of~$B$, unlike
for the Brunn--Minkowski inequality itself. Indeed, it is natural to guess, based for example upon isoperimetric notions, that Euclidean
balls might actually be best. And this is what we prove.

\begin{thm}\label{main_sphere}
 Let $d,n\in\N$ and let $A$ and\/ $B$ be sets of finite positive measure in~$\R^d$.
 Let $A'$ and\/ $B'$ be balls in $\R^d$ with the same volumes as
 $A$ and\/~$B$ respectively. Then for $b_1,\dots,b_n$ and\/ $b'_1,\dots,b'_n$ chosen
 independently and uniformly at random from $B$ and\/~$B'$, respectively, we have 
 \[
  \E(|\medcup_i(A+b_i)|) \ge \E(|\medcup_i(A'+b'_i)|).
 \]
\end{thm}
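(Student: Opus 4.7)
The plan is to rewrite the expected covered volume as an integral $\int\phi(g)\,dx$ of a concave function applied to a convolution, and then combine the Riesz rearrangement inequality with Hardy--Littlewood--Polya majorisation. By Fubini, for each $x\in\R^d$ one has $\Prb[x\in A+b_i]=g(x)$ where $g(x):=(\id_A*\id_B)(x)/|B|$, and independence gives $\Prb[x\in\medcup_i(A+b_i)]=\phi(g(x))$ with $\phi(t):=1-(1-t)^n$. Hence
\[
\E\bigl|\medcup_i(A+b_i)\bigr| = \int_{\R^d}\phi(g(x))\,dx,
\]
and similarly $\E\bigl|\medcup_i(A'+b'_i)\bigr|=\int\phi(\tilde g)\,dx$ with $\tilde g:=(\id_{A'}*\id_{B'})/|B|$. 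The left-hand side is translation-invariant, so we may centre $A'$ and $B'$ at the origin, making $\tilde g$ symmetric decreasing. Note that $\phi$ is increasing and concave on $[0,1]$ with $\phi(0)=0$.

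Next, let $g^*$ be the symmetric-decreasing rearrangement of $g$ on $\R^d$; rearrangement preserves distributions, so $\int\phi(g)\,dx=\int\phi(g^*)\,dx$. The Riesz rearrangement inequality applied to $\id_E$, $\id_A$, $\id_B$ gives, for every measurable $E\subset\R^d$,
\[
\int_E(\id_A*\id_B)(x)\,dx \;\le\; \int_{E^*}(\id_{A'}*\id_{B'})(x)\,dx,
\]
where $E^*$ is the centred ball of volume $|E|$. Taking the supremum over all $E$ of a fixed volume $v=|B_r|$ and using the defining property $\sup_{|E|=v}\int_E g = \int_{B_r}g^*$ of the symmetric-decreasing rearrangement, we deduce
\[
\int_{B_r}g^*(x)\,dx \;\le\; \int_{B_r}\tilde g(x)\,dx \qquad \text{for every } r\ge 0,
\]
with equality at $r=\infty$ (both sides equal $|A|$).

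Both $g^*$ and $\tilde g$ are symmetric decreasing on $\R^d$ with equal total mass, and the displayed inequality is exactly the Hardy--Littlewood--Polya majorisation $g^*\prec\tilde g$. Because $\phi$ is concave with $\phi(0)=0$, this majorisation yields $\int\phi(g^*)\,dx\ge\int\phi(\tilde g)\,dx$, and with $\int\phi(g)=\int\phi(g^*)$ the theorem follows. The central difficulty is the Riesz step: one needs a rearrangement inequality for the three-fold integral strong enough that the supremum over $E$ matches the integral of $g^*$ over the centred ball, and for this we invoke the classical Riesz--Brascamp--Lieb--Luttinger inequality. The final passage from majorisation to the reversed integral inequality for concave $\phi$ is standard (e.g.\ via integration by parts exploiting $\phi''\le 0$, or by reducing via layer-cake to the test functions $t\mapsto\min(t,s)$, for which the inequality is exactly the majorisation condition).
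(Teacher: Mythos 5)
Your proof is correct, and the key chain of ideas --- rewrite $\E|\bigcup_i(A+b_i)|$ as $\int\phi(\id_A*\id_B/|B|)$ with $\phi(t)=1-(1-t)^n$ concave, apply the Riesz rearrangement (Riesz--Sobolev) inequality over all test sets $E$ to deduce that the symmetric-decreasing rearrangement of $\id_A*\id_B$ is majorized by $\id_{A'}*\id_{B'}$, and then use HLP majorization for the concave $\phi$ --- works as stated. However, it is genuinely different from the paper's \emph{main} proof of \Cref{main_sphere}. The paper's primary route (Sections 2--5) proves a discrete $\Z_p$ version first, using Pollard's inequality and a profile/rearrangement lemma (\Cref{engine}) in the group $\Z_p$, then passes to $\T$ by approximation and to $\R^d$ via Steiner symmetrization in the spirit of \Cref{compress_limit_d}. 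Your argument is instead essentially the paper's \emph{alternative} proof sketched in Section~\ref{stability_sphere}: the triple $(\text{rewrite as }\int\phi(\id_A*\id_B),\ \text{Riesz--Sobolev}, \ \text{majorization with concave }\phi)$ is exactly what \Cref{Main_ingredient_Rd} and \Cref{cont_arbitrary_functions_Rd} encode, specialized to the case of zero deficit. The paper even flags this in the introduction: the stability proofs ``do not rely upon the actual inequalities themselves, so that these may be viewed as also providing alternative proofs of the results.'' The trade-off is that the symmetrization route is elementary (Pollard in $\Z_p$ plus Steiner symmetrization) and generalizes readily to the ``different sets $A_i$, $B_i$'' statement, while your route is shorter and invokes the Riesz rearrangement inequality in $\R^d$ as a black box, and it is the one that naturally upgrades to the sharp stability statement when combined with the Christ stability theorem. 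One small point worth saying explicitly: in passing from $\sup_{|E|=v}\int_E g=\int_{B_r}g^*$ you are using the Hardy--Littlewood inequality and the attainment of the supremum on (approximate) superlevel sets; this is standard, but should be cited.
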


From this result, we may obtain estimates on how many random points of $B$ we need to
take to ensure that the expected volume is close to the true lower bound on
$|A+B|$ given by the Brunn--Minkowski inequality. Indeed, \Cref{main_sphere} has the following consequence.

\begin{cor}\label{estimate_sphere}
 Let $d,n\in\N$ and let $A$ and\/ $B$ be sets of finite positive measure in~$\R^d$.
 Then for $b_1,\dots,b_n$ chosen independently and uniformly at random from $B$ we have 
 \[
  \E(|\medcup_i(A+b_i)|) \ge (|A|^{1/d}+|B|^{1/d})^d \cdot\big(1-(c+o(1))n^{-2/(d+1)}\big)
  \qquad\text{as }n\to\infty,
 \]
 where $c$ is a constant that depends only on the dimension $d$ and the ratio
 of the volumes of\/ $A$ and\/~$B$.
\end{cor}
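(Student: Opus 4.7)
The plan is to combine \Cref{main_sphere} with a direct computation for balls. Let $A' = B(0,r)$ and $B' = B(0,s)$ be balls with the same volumes as $A$ and~$B$. Since balls achieve equality in Brunn--Minkowski, $|A'+B'| = (r+s)^d\omega_d = (|A|^{1/d}+|B|^{1/d})^d$ (with $\omega_d$ the volume of the unit ball in $\R^d$), and by \Cref{main_sphere} we have $\E(|\medcup_i(A+b_i)|) \ge \E(|\medcup_i(A'+b'_i)|)$. It therefore suffices to prove
\[
L_n := |A'+B'| - \E\bigl(|\medcup_i(A'+b'_i)|\bigr) \le (c+o(1))\,|A'+B'|\,n^{-2/(d+1)}
\]
for some $c = c(d,|A|/|B|)$. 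Since a point $x \in A'+B'$ fails to be covered exactly when no $b'_i$ lies in $B' \cap (x-A')$,
\[
L_n = \int_{A'+B'}\Bigl(1 - \frac{f(x)}{|B'|}\Bigr)^n\,dx, \qquad f(x) := |B' \cap (x-A')|.
\]

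By rotational symmetry $f(x)$ depends only on $\rho = |x|$, and I split the integral into three regimes. For $\rho \le |s-r|$ one of $B'$, $x-A'$ contains the other, so $f(x) = \min\{|A'|,|B'|\}$, giving an integrand that is zero or exponentially small in~$n$. For $\rho$ close to $r+s$, set $\delta := (r+s) - \rho$; then $B' \cap (x-A')$ is the lens of two balls of radii $s$ and $r$ whose centres are at distance $r+s-\delta$. Splitting this lens along the separating hyperplane through the $(d-2)$-sphere of intersection yields two spherical caps with heights proportional to $\delta$, and the standard asymptotic for the volume of a cap of height $h$ in a $d$-ball of radius $R$, namely $\tfrac{2\omega_{d-1}}{d+1}(2R)^{(d-1)/2}h^{(d+1)/2}(1+O(h/R))$ as $h\to 0$, gives
\[
f(\rho) = \alpha(d,r,s)\,\delta^{(d+1)/2}\bigl(1 + O(\delta)\bigr)
\]
for an explicit positive constant $\alpha$. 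In the intermediate region $|s-r| < \rho < r+s-\eta_0$ for any fixed small $\eta_0 > 0$, $f$ is bounded away from~$0$, so the contribution to $L_n$ is again exponentially small in~$n$.

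Writing $L_n$ in spherical coordinates as $d\omega_d \int_0^{r+s}\rho^{d-1}(1-f(\rho)/|B'|)^n\,d\rho$, the only non-negligible contribution comes from the boundary layer. Replacing $(1-f/|B'|)^n$ by $\exp(-nf/|B'|)$ (with uniform control of the error for $\delta$ of order at most $n^{-2/(d+1)}\log n$) and substituting $u = n^{2/(d+1)}\delta$ transforms the leading term into
\[
L_n = \bigl(1+o(1)\bigr)\,d\omega_d\,(r+s)^{d-1}\,n^{-2/(d+1)}\int_0^{\infty}\exp\!\Bigl(-\tfrac{\alpha}{|B'|}u^{(d+1)/2}\Bigr)\,du.
\]
The integral is finite (and explicit in terms of the Gamma function); dividing by $|A'+B'|=(r+s)^d\omega_d$ produces a constant $c = c(d,r/s)$. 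Since $r/s = (|A|/|B|)^{1/d}$, this depends only on $d$ and $|A|/|B|$, as required.

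The main technical obstacle is the lens asymptotic $f(\rho) \sim \alpha\,\delta^{(d+1)/2}$: one must compute both spherical caps to leading order and control the $O(\delta)$ error uniformly. Everything else is a Laplace-type integration that is routine but needs care to ensure the $o(n^{-2/(d+1)})$ correction on the final bound.
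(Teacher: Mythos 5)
Your proof is correct and essentially matches the paper's: both reduce to balls via \Cref{main_sphere}, express the expected uncovered volume as $\int_{A'+B'}\bigl(1-|(x-A')\cap B'|/|B'|\bigr)^n\,dx$, expand the lens volume near the boundary as $\alpha\,\delta^{(d+1)/2}(1+o(1))$, and finish with a Laplace-type evaluation giving the $n^{-2/(d+1)}$ rate. The only cosmetic difference is that you compute the two spherical caps of the lens via their heights $h_1\approx r\delta/(r+s)$ and $h_2\approx s\delta/(r+s)$, whereas the paper parameterizes them by the half-angles $\alpha$, $\beta$ of the triangle with sides $a$, $b$, $a+b-\eps$; the two computations give the same leading constant.
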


It is also very natural to ask similar questions in the torus~$\T^d$, where $\T$ denotes
the circle $\R/\Z$. This time the analogue of the Brunn--Minkowski inequality would state that bands,
meaning sets of the form $[x,y]\times\T^{d-1}$, are best. More precisely, if $A$ and $B$
are (measurable) sets in $\T^d$ then $|A+B| \ge |A|+|B|$ (if the sum of the two volumes is at most
$|\T^d|=1$, of course -- so strictly one should write $\min\{|A|+|B|,1\}$). This is a result of 
Macbeath~\cite{macbeath} -- it can also be deduced immediately from Kneser's theorem~\cite{Knes}
in the group~$\Z_p^d$. We show that indeed bands are always best to minimize the expected volume
of $A+S$, where $S$ consists of $n$ points chosen uniformly at random from~$B$.

\begin{thm}\label{main_torus}
 Let $d,n\in\N$ and let $A$ and\/ $B$ be sets of positive measure in~$\T^d$. Let $A'$
 and\/ $B'$ be bands in $\T^d$ \textup{(}oriented in the same direction\textup{)} with the
 same volumes as $A$ and\/ $B$ respectively, i.e., sets of the form $I\times\T^{d-1}$,
 where $I$ is a closed interval in~$\T$. Then for $b_1,\dots,b_n$ and\/ $b'_1,\dots,b'_n$
 chosen independently and uniformly at random from $B$ and $B'$ respectively, we have 
 \[
  \E(|\medcup_i(A+b_i)|) \ge \E(|\medcup_i(A'+b'_i)|).
 \]
\end{thm}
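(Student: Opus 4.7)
By inclusion--exclusion,
$$\E\bigl|\medcup_i(A+b_i)\bigr|=\int_{\T^d}\bigl[1-(1-f_{A,B}(x))^n\bigr]\,dx,\qquad f_{A,B}(x):=\frac{|B\cap(x-A)|}{|B|}=\frac{(\id_A*\id_B)(x)}{|B|},$$
and analogously for the band pair $A',B'$. Since $t\mapsto1-(1-t)^n$ is concave, increasing, and vanishes at $0$, and since $\int f_{A,B}=|A|=\int f_{A',B'}$, the inequality (for every $n\in\N$) is equivalent to
$$\int_{\T^d}\phi(f_{A,B})\,dx\ge\int_{\T^d}\phi(f_{A',B'})\,dx$$
for every concave increasing $\phi\colon[0,1]\to\R$ with $\phi(0)=0$; equivalently, $f_{A',B'}$ is a mean-preserving spread of $f_{A,B}$ under Lebesgue measure on $\T^d$.

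To establish this I would mimic the Steiner-symmetrization approach that presumably drives \Cref{main_sphere}, with a torus analogue of the symmetrization. For a coordinate direction $e$, let $R_e$ be the operation that, on every fibre of $\T^d$ parallel to $e$ (a circle), replaces the slice of $A$ by an arc of the same $1$-dimensional length placed at a chosen base point, and apply $R_e$ simultaneously to $A$ and $B$. Writing $A_t=\{s\in\T:(s,t)\in A\}$ for $t\in\T^{d-1}$, Fubini gives $f_{A,B}(x_1,y)=\frac{1}{|B|}\int_{\T^{d-1}}(\id_{A_t}*\id_{B_{y-t}})(x_1)\,dt$, so the key lemma
$$\int_{\T^d}\phi(f_{R_eA,R_eB})\,dx\le\int_{\T^d}\phi(f_{A,B})\,dx$$
reduces to a one-dimensional Riesz--Sobolev-type statement on $\T$: replacing each slice by its band rearrangement ``concentrates'' the convolution and therefore, because $\phi$ is concave and the $\T$-integral of the concentrated function is unchanged, decreases $\int\phi$.

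To finish one would iterate $R_{e_i}$ for $i=2,\dots,d$, or run an induction on $d$ with the $1$-dimensional case as the base, exploiting that for a band in direction $e_1$ the $d$-dimensional problem separates as a product with the trivial $\T^{d-1}$ factor and reduces exactly to the $1$-dimensional problem on the first factor. The main obstacle, and the analytical heart of the proof, is the one-dimensional rearrangement inequality on $\T$: unlike on $\R$, where the symmetric decreasing rearrangement interacts cleanly with convolutions via the classical Riesz--Sobolev inequality, on the circle one must work with arcs at a fixed base point and carefully handle ``wrap-around'' when the rearranged supports together exceed the length of $\T$, which complicates the usual sliding and layer-cake arguments. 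A secondary difficulty is arranging the iteration to genuinely converge to bands, since a single application of $R_{e_i}$ in one direction does not yield a band and subsequent applications can undo earlier ones; a Pólya-type smoothing argument or a careful induction on dimension will likely be needed.
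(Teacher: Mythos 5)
Your opening reformulation is sound and in fact parallels the viewpoint the paper adopts in its stability argument in Section~\ref{stability}: since $\phi(t)=1-(1-t)^n$ is concave increasing with $\phi(0)=0$, the theorem would follow from the majorization statement that $f_{A',B'}$ dominates $f_{A,B}$, which is exactly the content of the torus Riesz--Sobolev inequality of Christ and Iliopoulou (\Cref{Riesz-Sobolev}). However, the central symmetrization lemma you propose, $\int\phi(f_{R_eA,R_eB})\le\int\phi(f_{A,B})$, does not reduce to a one-dimensional statement via your Fubini identity. That identity writes $f_{A,B}(x_1,y)$ as an \emph{average} over $t\in\T^{d-1}$ of one-dimensional convolutions, and $\phi$ sits outside that average; Jensen's inequality goes the wrong way for concave $\phi$, and one-dimensional Riesz--Sobolev applied slice-by-slice only compares integrals over centred arcs in $x_1$, not over the superlevel sets of the $t$-average, so it does not yield the required per-$y$ majorization. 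The paper sidesteps this by a different mechanism: it conditions on the transverse coordinates of the sample points. Writing $b_i=(z_i,u_i)$ and the test point $x=(z,u)$, conditioning on $u,u_1,\dots,u_n$ turns the problem into a genuine one-dimensional problem on $\T$ for the slices $A_{u-u_i}$ and $B_{u_i}$, which are \emph{different} sets for each $i$; this is precisely why \Cref{main_tool} and \Cref{main_corollary} are proved for varying families $A_1,\dots,A_n$, $B_1,\dots,B_n$ --- a generality your sketch does not anticipate but cannot do without.

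There is a second, independent obstruction: on $\T^d$, Steiner symmetrizations in the $d$ coordinate directions alone do \emph{not} converge to bands. For instance, the cross-shaped set $\big([-\tfrac18,\tfrac18]\times\T\big)\cup\big(([-\tfrac14,-\tfrac18]\cup[\tfrac18,\tfrac14])\times[-\tfrac18,\tfrac18]\big)\subseteq\T^2$ is a fixed point of both coordinate symmetrizations yet is not a band; unlike $\R^d$, the torus has no continuous rotations, so axis-parallel symmetrizations cannot mix the coordinates. The paper therefore symmetrizes along shear directions, using matrices $(x,y)\mapsto(x,y+jx)$ with $j\to\infty$ (\Cref{compress_limit_2} and \Cref{compress_limit_d}), with a uniformity-in-$X$ convergence argument to make the limit legitimate. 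Finally, the one-dimensional base case on $\T$, which you correctly identify as the analytical heart, is the hardest part of the paper: it is established by passing to $\Z_p$, invoking a Pollard-type inequality there, and proving the non-obvious functional inequality of \Cref{engine}; it does not follow from the classical Riesz--Sobolev inequality on $\R$ because of wrap-around, as you suspected.
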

To keep the notation simple we use the same notation $|\cdot|$ for Lebesgue measure in the
torus as in Euclidean space --- there is no place where this is ambiguous.

Again, \Cref{main_torus} allows us to estimate how many points we need to take to ensure that the sum
of $A$ and $n$ points of $B$ is, on average, of volume at least $\min\{|A|+|B|,1\}-\eps$.

\begin{cor}\label{estimate_intervals}
 Let $n\ge 1$ and let $A$ and\/ $B$ be sets of positive measure in $\T^d$. Then for
 $b_1,\dots,b_n$ chosen independently and uniformly at random from $B$ we have 
 \[
  \E(|\medcup_i(A+b_i)|) \ge |A|+\tfrac{n-1}{n+1} |B|
   -\tfrac{n-1}{n+1} |B|\big(1-\tfrac{|A|}{|B|}\big)^{n+1}\id_{|A|<|B|},
 \]
 when $|A|+|B|\le 1$, and
 \[
  \E(|\medcup_i(A+b_i)|) \ge 
   1-\big(\tfrac{1-|A|}{|B|}\big)^n\big(|B|-\tfrac{n-1}{n+1}(1-|A|)\big)
   -\tfrac{n-1}{n+1} |B|\big(1-\tfrac{|A|}{|B|}\big)^{n+1}\id_{|A|<|B|},
 \]
 when $|A|+|B|>1$, where $\id_{|A|<|B|}$ is $1$ if\/ $|A|<|B|$ and\/ $0$ otherwise.
\end{cor}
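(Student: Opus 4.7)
\medskip
\noindent\textbf{Proof plan.}
The plan is to apply \Cref{main_torus} to reduce to the case of bands and then compute the expected volume explicitly in one dimension. By \Cref{main_torus}, it is enough to prove the stated inequality when $A$ and $B$ are themselves bands, say $A=[0,\alpha]\times\T^{d-1}$ and $B=[0,\beta]\times\T^{d-1}$ with $\alpha=|A|$ and $\beta=|B|$. Then each $b_i=(x_i,y_i)$ with $x_i$ uniform on $[0,\beta]$ and $y_i$ uniform on $\T^{d-1}$; since the band is $\T^{d-1}$-invariant, $A+b_i=[x_i,x_i+\alpha]\times\T^{d-1}$, so the problem reduces to computing $\E(|\medcup_i[x_i,x_i+\alpha]|)$ where the intervals are on $\T=\R/\Z$ and the $x_i$ are i.i.d.\ uniform on $[0,\beta]$.

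To evaluate this, I would swap the expectation and Lebesgue integral via Fubini. For each $t\in\T$, $t\in\medcup_i[x_i,x_i+\alpha]$ iff some $x_i$ lies in the arc $[t-\alpha,t]\pmod 1$, so by independence
\[
 \Prb[t\in\medcup_i[x_i,x_i+\alpha]] = 1-\bigl(1-L(t)/\beta\bigr)^n,
\]
where $L(t):=\bigl|[t-\alpha,t]\cap[0,\beta]\bigr|$ is computed on the torus. Thus
\[
 \E\bigl(|\medcup_i[x_i,x_i+\alpha]|\bigr)=\int_0^1\!\Bigl(1-\bigl(1-L(t)/\beta\bigr)^n\Bigr)\,dt.
\]

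The remaining task is to compute $L(t)$ piecewise and integrate. It is a routine but fiddly case analysis depending on the two orderings $\alpha\lessgtr\beta$ and on whether $\alpha+\beta\le 1$ or $\alpha+\beta>1$ (the latter being where arcs can wrap around the torus and combine). In each case $L$ is piecewise linear in $t$ with at most four pieces: a constant ``full overlap'' piece of value $\min(\alpha,\beta)$ (or $\alpha+\beta-1$ in the wrap case), two linear ramps, and (when $\alpha+\beta\le 1$) a zero piece. Substituting into the integral and performing elementary antidifferentiation yields the stated closed-form expressions, and the indicator $\id_{|A|<|B|}$ arises because the ``plateau'' piece $L(t)\equiv\alpha$ over $t\in[\alpha,\beta]$ contributes only when $\alpha<\beta$ (and similarly one should verify that the computation is symmetric in sign as $\alpha$ and $\beta$ swap roles apart from this term).

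The only real obstacle is the bookkeeping: one must be careful that the sub-intervals of $t$ are correctly ordered in each regime, and use the algebraic identity $\alpha+\beta-1=\beta\bigl(1-(1-\alpha)/\beta\bigr)$ to combine the wrap-around contributions into the compact form $\bigl((1-|A|)/|B|\bigr)^n\bigl(|B|-\tfrac{n-1}{n+1}(1-|A|)\bigr)$. As a sanity check one can verify $\int_0^1 L(t)\,dt=\alpha\beta$ by Fubini, matching the $n=1$ expectation $\E(|A+b_1|)=|A|$ after cancellation of the extra terms; this catches any slip in the case analysis.
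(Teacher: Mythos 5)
Your reduction to bands via \Cref{main_torus} and the further reduction to a one-dimensional computation on $\T$ match the paper's first step exactly, but the way you evaluate $\E(|\medcup_i[x_i,x_i+\alpha]|)$ is genuinely different. You use Fubini: for each $t\in\T$ you compute the pointwise coverage probability $1-(1-L(t)/\beta)^n$, where $L(t)=|[t-\alpha,t]\cap[0,\beta]|$, and integrate over $t$. The paper instead uses order statistics: it sorts $b_1<\dots<b_n$, observes that the consecutive gaps $b_i-b_{i-1}$ each have the distribution of $\min_j b_j$, writes $|\medcup_i(A+b_i)|=a+\sum_{i>1}\min\{a,b_i-b_{i-1}\}$ (with a separate correction term for wrap-around when $a+b>1$), and computes $\E(\min\{a,b_1\})$ by a symmetry argument about which of $n+1$ uniform points is smallest. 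Both are standard and correct. Your method is in fact the same one the paper uses for the Euclidean analogue, \Cref{estimate_sphere}, so it makes the two calculations uniform; its cost is that the piecewise description of $L(t)$ produces a longer (though elementary) case split, especially when $\alpha+\beta>1$ (the plateau value becomes $\alpha+\beta-1$ rather than $0$, and the ramps wrap). The paper's order-statistics argument localises the wrap-around into a single correction term $\E\max\{a+b-1-b_2,0\}$, which is slightly cleaner to evaluate. I checked that your integral does reproduce the stated closed form in the case $\alpha<\beta$, $\alpha+\beta\le 1$, and your sanity checks ($\int_0^1 L\,dt=\alpha\beta$ and the $n=1$ case) are valid, so the plan is sound; you would just need to carry out the remaining case analysis to have a complete proof.
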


Our approach to our main result in Euclidean space, \Cref{main_sphere}, proceeds by first proving a discrete
version of the result in one dimension, so in the group~$\Z_p$. Thus suppose we have sets
$A$ and $B$ in $\Z_p$ of given size, and we want to minimize the expected size of the sum
of $A$ with $n$ random points of~$B$. Then it is best to take $A$ and $B$ to be intervals.
Although this might seem very natural, it is actually quite non-trivial to prove. Indeed,
this result, or rather a generalization where we choose the $n$ points not all from the
same set but from different sets, is at the heart of our approach, and is where most of the
difficulty lies. Once we have proved this result, the general result will follow by some
reasonably standard arguments of discrete approximation and Steiner symmetrization. 

\begin{thm}\label{main_tool}
 Let $n,p\in\N$ with $p$ prime and let $A_1,\dots,A_n$ and\/ $B_1,\dots,B_n$ be finite non-empty 
 subsets of\/~$\Z_p$. Let $A'_1,\dots,A'_n$ and\/ $B'_1,\dots,B'_n$ be intervals of the
 same sizes as the sets $A_1,\dots,A_n$ and\/ $B_1,\dots B_n$ respectively, with each
 $A'_i+B'_i$ centred\footnote{In the case when $A'_i+B'_i=\Z_p$ we mean that the sum of the
 centres of $A'_i$ and $B'_i$ is $0$ or $1/2$.} at\/ $0$ or $1/2$ \textup{(}depending on parity\textup{)}.
 Then for $b_i$, $b'_i$ chosen independently and uniformly at random from $B_i$, $B'_i$,
 respectively, we have
 \[
  \E(|\medcup_i (A_i+b_i)|) \ge \E(|\medcup_i (A'_i+b'_i)|).
 \]
\end{thm}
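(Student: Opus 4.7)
First I would rewrite the expectation by complementation: for each $x\in\Z_p$ and each $i$, independence gives $\Prb(x\notin A_i+b_i) = 1-r_i(x)/|B_i|$, where $r_i(x) := |\{(a,b)\in A_i\times B_i : a+b=x\}|$ is the usual representation count. Hence
\[
 \E\bigl(|\medcup_i(A_i+b_i)|\bigr)
  = p-\sum_{x\in\Z_p}\prod_{i=1}^n\Bigl(1-\frac{r_i(x)}{|B_i|}\Bigr),
\]
and the theorem is equivalent to showing that the sum $F(\{A_i\},\{B_i\})$ on the right is \emph{maximized}, among configurations of prescribed sizes, by intervals appropriately centred. The case $n=1$ is immediate since $F$ depends only on $|A_1|$.

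The heart of the argument would be a local-compression step. For any fixed $j$, the dependence of $F$ on $(A_j,B_j)$ is
\[
 F=\Bigl(\sum_x g(x)\Bigr)-\frac{1}{|B_j|}\sum_{a\in A_j,\,b\in B_j}g(a+b),\qquad
 g(x):=\prod_{i\ne j}\Bigl(1-\tfrac{r_i(x)}{|B_i|}\Bigr)\ge 0,
\]
so maximising $F$ in $(A_j,B_j)$ with sizes fixed amounts to minimising the bilinear functional $\sum_{a,b}g(a+b)$. I would attempt to show, by a swap argument, that this is achieved by intervals: if $A_j$ consists of more than one cyclic arc, relocate a boundary element to an adjacent empty position so that the change in $\sum_{a,b}g(a+b)$ is non-positive. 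In $\Z_p$ one should exploit a pairing of candidate swaps coming from the two extremal boundaries of $A_j$, chosen so that at least one of the two is guaranteed non-positive. Iterating over $j$, alternating between $A_j$ and $B_j$, and using an interval-defect potential to ensure termination, should reduce the problem to intervals. Once every $A_j,B_j$ is an interval, $r_j$ is a discrete tent function, and a final translation argument (translating $A_j$ and $B_j$ oppositely moves the midpoint of $A_j+B_j$ without changing $r_j$) shows that $F$ is maximised when all midpoints coincide, yielding the parity-dependent centering at $0$ or $1/2$.

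The main obstacle, and presumably what the authors flag as the difficulty, is the swap step. The weight $g$ is only known to satisfy $0\le g\le 1$, with no a~priori monotonicity or unimodality; extracting a non-positive swap using only cyclicity and boundedness looks delicate, and guaranteeing termination of the compression in a group with no intrinsic linear order is subtle. I would expect the full argument to intertwine the compressions for different $j$'s, rather than treating them independently, so as to exploit the interval structure of the already-compressed pairs when computing the effect of a swap on the remaining ones.
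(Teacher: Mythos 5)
Your reformulation is the same as the paper's: after complementation the target is $\sum_x\prod_i\id_{A_i^c}*\id_{B_i}(x)$, which equals $\prod_i|B_i|$ times your sum. The gap is in the compression step, and it is more serious than you suggest. For arbitrary non-negative $g$, the bilinear functional $\sum_{a\in A_j,\,b\in B_j}g(a+b)$ is simply not minimised by intervals: if, say, $g$ is supported on an arithmetic progression of common difference $d>1$, the minimiser is a pair of progressions of difference $d$ whose sumset avoids the support of $g$, and an interval pair is strictly worse. So the swap step cannot succeed against arbitrary $g$; it could at best work once the other pairs are already intervals (giving $g$ a symmetric, V-shaped structure), but you then need to organise the induction so that earlier symmetrisations remain valid after later ones and settle the termination issue in $\Z_p$ that you flag. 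None of this is carried out, and it is not clear it can be salvaged along these lines.

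The paper sidesteps compression entirely and so is genuinely different. For each $i$ it invokes a Pollard-type inequality in $\Z_p$ (Theorem~2 of \cite{pollard}): for every $j\ge0$, $\sum_x\min(\id_{D_i}*\id_{B_i}(x),j)\ge\sum_x\min(\id_{D'_i}*\id_{B'_i}(x),j)$, where $D_i=A_i^c$ and $D'_i,B'_i$ are intervals; equivalently $\sum_x\max(0,\id_{D_i}*\id_{B_i}(x)-j)\le\sum_x\max(0,\id_{D'_i}*\id_{B'_i}(x)-j)$ for all $j$, a majorisation-type family. Passing to increasing rearrangements $f_i,g_i$ of the two convolutions (the rearrangement inequality gives $\sum_x\prod_i\id_{D_i}*\id_{B_i}(x)\le\sum_x\prod_i f_i(x)$, and the centring hypothesis makes $\sum_x\prod_i\id_{D'_i}*\id_{B'_i}(x)=\sum_x\prod_i g_i(x)$ an exact identity), the theorem reduces to an Abel-summation lemma: increasing functions satisfying the majorisation condition obey $\sum_x\prod_i f_i(x)\le\sum_x\prod_i g_i(x)$. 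Thus the extremality of intervals is imported once and for all from Pollard rather than re-derived by local swaps, and no iteration or termination argument is required.
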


It might seem strange to be using the cyclic group, instead of the group $\Z$ of integers,
to approximate the continuous space~$\R$, but in fact this is vital to our approach.
Roughly speaking, this is because in $\Z_p$ the complement of a finite set is still finite
--- this will allow us to transform unions into intersections. As we shall see, the size of
the \emph{intersection} of some translates of a set $A$ is much easier to deal with than the size
of a \emph{union} of translates. 

In the torus, our starting point is again the above discrete result. We pass to a continuous version,
and then we try to symmetrize on the torus. However, the torus is not isotropic, so one does not
have the power of `symmetrizing in all possible directions' that one has in~$\R^d$.
Nevertheless, it turns out that there are just enough symmetrizations that we are allowed to
perform that we can solve the problem in two dimensions: essentially, we can symmetrize in
any `direction' given by a member of $SL_2(\Z)$. This is where most of the work comes: armed with the 
two-dimensional version
of the theorem, it is not hard to pass to the general case.

We also consider stability versions of these results. In other words, if our sets $A$ and $B$ give a
value for the expectation that is `close' to the lower bound, how close must $A$ and $B$ be to the
extremal sets for the inequality? We are able to prove stability results for each our two main results that are
sharp, in the sense that the
dependences among the notions of `close' are best possible. These are 
\Cref{stab_Rd} and \Cref{stab} below:
the former as the stability version of \Cref{main_sphere} and the latter for \Cref{main_torus}.   
Interestingly, the proofs of these
stability statements do not rely upon the actual inequalities themselves, so that these may be viewed
as also providing alternative proofs of the results (\Cref{main_sphere} and \Cref{main_torus}). However, these
stability proofs do rely on several deeper recent tools.

The plan of the paper is as follows. In Section~\ref{tool} we prove our main tool, 
\Cref{main_tool}. In Section~\ref{continuous} we translate this result into the
continuous setting of the 1-dimensional torus. In Section~\ref{torus} we prove \Cref{main_torus}, our 
result on the torus, and in the next section we 
finish the proof of our result in~$\R^d$, \Cref{main_sphere}.
Then Section~\ref{calculations} contains the calculations needed for the actual estimates
that follow from our results: these are routine, although somewhat fiddly.
In Section~\ref{stability} we prove our sharp stability result for \Cref{main_torus}, and
then in Section~\ref{stability_sphere} we prove a corresponding stability result for
\Cref{main_sphere}. We conclude with some open problems.

Our notation is standard. However, we would like to highlight one of the conventions that we use.
When we have a measurable set $A$, say in~$\R^d$, we often write as though all of its various
sections in a given direction were measurable sets in $\R^{d-1}$. Of course, this need not
be the case, but as it only fails for a set of sections that has measure zero, this
will never affect any of our arguments. 

Finally, we mention one more useful convention. Often we have intervals, in $\Z_p$ or $\T$,
centred at various points. We always allow the whole space $\Z_p$ or $\T$ as an interval,
and we consider it to be `centred' at any point we choose.

\section{Proof of \Cref{main_tool}}\label{tool}

\begin{proof}[Proof of \Cref{main_tool}]
The case $p = 2$ is easy to check, so we may assume that $p$ is an odd prime.
We need to show that for $b_i$, $b'_i$ chosen independently and uniformly at random
from $B_i$, $B'_i$, respectively, we have
\[
 \E(|\medcup_i (A_i+b_i)|) \ge \E(|\medcup_i (A'_i+b'_i)|),
\]
which after taking complements inside $\Z_p$ is equivalent to
\[
 \E(|\medcap_i (A_i^c+b_i)|) \le \E(|\medcap_i (A^{\prime c}_i+b'_i)|).
\]
After scaling and using $|B_i|=|B'_i|$, we can rewrite the last inequality as 
\[
 \sum_{b_1\in B_1,\dots,b_n\in B_n} |\medcap_i (A_i^c+b_i)|
 \le \sum_{b'_1\in B'_1,\dots,b'_n\in B'_n} |\medcap_i (A^{\prime c}_i+b'_i)|,
\]
which is equivalent to
\[
 \sum_{x\in\Z_p}\prod_i |\{(d_i,b_i)\in A_i^c\times B_i : d_i+b_i=x\}|
 \le \sum_{x\in\Z_p}\prod_i
 |\{(d^{\prime}_i,b'_i)\in A^{\prime c}_i\times B'_i : d^{\prime}_i+b'_i=x\}|,
\]
or in other words
\[
 \sum_{x\in\Z_p}\prod_i\id_{A_i^c}*\id_{B_i}(x)
 \le \sum_{x\in\Z_p}\prod_i\id_{A^{\prime c}_i}*\id_{B'_i}(x),
\]
where $\id_X$ represents the characteristic function of $X\subseteq\Z_p$ and
$*$ denotes convolution. To simplify the notation, set
\[
 D_i=A_i^c\qquad\text{and}\qquad
 D'_i=A^{\prime c}_i + \tfrac{p-1}{2},
\]
so as to centre $D'_i+B'_i$ at $0$ or $-1/2$
(depending on parity). Note that $D'_i$ is an interval with the same size as~$D_i$.
With this notation, the objective is to show that
\begin{equation}\label{eq_equiv}
 \sum_{x\in\Z_p}\prod_i\id_{D_i}*\id_{B_i}(x)
 \le \sum_{x\in\Z_p}\prod_i\id_{D'_i}*\id_{B'_i}(x).
\end{equation}

In order to show the above inequality we shall exploit just the following properties
of these functions.
\begin{clm}\label{clm}
 For any $1\le i\le n$ we have that
 \begin{enumerate}
  \item for $j\ge0$, $\sum_{x\in\Z_p} \max(0,\id_{D_i}*\id_{B_i}(x)-j)
   \le \sum_{x\in\Z_p} \max(0,\id_{D'_i}*\id_{B'_i}(x)-j)$, and
  \item with $(x_0,x_1,\dots,x_{p-1})=(0,-1,1,-2,2,\dots,-\frac{p-1}{2},\frac{p-1}{2})$,
   the sequence $\id_{D'_i}*\id_{B'_i}(x_t)$ is decreasing.
 \end{enumerate}
\end{clm}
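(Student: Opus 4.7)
The plan is to establish the two parts of the claim separately. Part (a) reduces to a classical additive-combinatorics result on $\Z_p$, namely Pollard's theorem. Part (b) is a direct structural observation about the convolution of two interval indicators.

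\emph{Part (a).} Writing $r_i = \id_{D_i}*\id_{B_i}$ and $r'_i = \id_{D'_i}*\id_{B'_i}$, the total masses coincide: $\sum_x r_i(x) = |D_i||B_i| = |D'_i||B'_i| = \sum_x r'_i(x)$. Using the identity $\max(0, h(x)-j) = h(x) - \min(h(x), j)$, inequality (a) is equivalent to
\[
 \sum_x \min(r_i(x), j) \;\ge\; \sum_x \min(r'_i(x), j),
\]
and each side can be re-expressed as $\sum_x \min(h(x), j) = \sum_{k=1}^{j} |\{x : h(x) \ge k\}|$ for integer-valued $h \ge 0$. Pollard's theorem, applied to $D_i, B_i \subseteq \Z_p$ with $p$ prime, supplies the lower bound
\[
 \sum_{k=1}^{j} |\{x : r_i(x) \ge k\}| \;\ge\; j\cdot\min\!\big(p,\, |D_i|+|B_i|-j\big),
\]
valid for $1 \le j \le \min(|D_i|, |B_i|)$. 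A short direct computation with the trapezoidal convolution of two intervals of sizes $|D_i|, |B_i|$ shows that $\sum_x \min(r'_i(x), j)$ equals precisely this Pollard bound, both in the non-wrap regime $|D_i|+|B_i|-1 \le p$ (where $r'_i$ is a strict trapezoid) and in the wrap regime $|D_i|+|B_i|-1 > p$ (where $r'_i$ sits on a floor of height $|D_i|+|B_i|-p$). The boundary levels $j=0$ and $j > \min(|D_i|, |B_i|)$ are identities since both sides become $0$ or $|D_i||B_i|$, so (a) follows.

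\emph{Part (b).} The function $g_i := \id_{D'_i}*\id_{B'_i}$, being the convolution of two interval indicators in $\Z_p$, is a trapezoidal function symmetric about the midpoint $c$ of $D'_i+B'_i$ and non-increasing in the circular distance from $c$. By the centering hypothesis, $c \in \{0, -\tfrac12\}$. The enumeration $(x_0, x_1, \dots, x_{p-1}) = (0, -1, 1, -2, 2, \dots)$ lists $\Z_p$ in non-decreasing order of $|x|$ (handling $c=0$, where the pair $\{x_{2k}, x_{2k+1}\}=\{k,-(k+1)\}$ has one element at distance $k$ and the other at distance $k+1$ from $0$), and simultaneously in non-decreasing order of $|x+\tfrac12|$ (handling $c=-\tfrac12$, where both elements of the same pair sit at the common distance $k+\tfrac12$ from $-\tfrac12$). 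In either centering the sequence $g_i(x_t)$ is therefore non-increasing in $t$.

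\emph{Main obstacle.} The only genuine input is Pollard's theorem; once this is granted, (a) is bookkeeping and (b) is immediate from the shape of $g_i$. The principal technical point is the wrap-around regime in (a), where one must verify that the interval convolution $r'_i$ saturates the Pollard bound even after the "floor" appears.
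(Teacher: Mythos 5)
Your proof is correct, and for both parts it follows essentially the same route as the paper. For part (a) the paper simply cites Theorem~2 of the Nazarewicz--O'Brien--O'Neill--Staples reference, which is already stated in the comparison form $\sum_x \min(\id_{D_i}*\id_{B_i}(x),j) \ge \sum_x \min(\id_{D'_i}*\id_{B'_i}(x),j)$, whereas you reconstruct this comparison from the classical Pollard lower bound $\sum_{k=1}^{j}|\{x:r_i(x)\ge k\}| \ge j\min(p,|D_i|+|B_i|-j)$ together with a verification that the interval convolution saturates the bound in both the wrap and non-wrap regimes; both are correct and the computation you sketch does indeed check out. For part (b) the paper dismisses it as a ``simple computational check,'' and your symmetry argument about the trapezoid centred at $0$ or $-\tfrac12$ is exactly the content of that check.
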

\begin{proof}[Proof of \Cref{clm}]
For the first part, recall Theorem~2 in~\cite{pollard}. This states that for fixed
$1\le i\le n$  and $j\ge 0$ we have
\begin{equation}\label{e:T2p}
 \sum_{x\in\Z_p} \min(\id_{D_i}*\id_{B_i}(x),j)
 \ge \sum_{x\in\Z_p} \min(\id_{D'_i}*\id_{B'_i}(x),j).
\end{equation}
As $|D_i|=|D'_i|$ and $|B_i|=|B'_i|$ we note that
\[
 \sum_{x \in \Z_p} \id_{D_i}*\id_{B_i}(x) = \sum_{x \in \Z_p} \id_{D_i'}*\id_{B_i'}(x).
\]
Thus \eqref{e:T2p} is equivalent to
\[
 \sum_{x\in\Z_p} \big(\id_{D_i}*\id_{B_i}(x)-\min(\id_{D_i}*\id_{B_i}(x),j))\big)
 \le \sum_{x\in\Z_p} \big(\id_{D'_i}*\id_{B'_i}(x)-\min(\id_{D'_i}*\id_{B'_i}(x),j)\big),
\]
i.e.,
\[
 \sum_{x\in\Z_p} \max(0,\id_{D_i}*\id_{B_i}(x)-j)
 \le \sum_{x\in\Z_p} \max(0,\id_{D'_i}*\id_{B'_i}(x)-j),
\]
as desired. 

The second part is just a simple computational check. However, we point out that it is
here where we use the fact that the interval $D'_i+B'_i$ is almost symmetric about the origin.
\end{proof}

We say two functions $h_1,h_2\colon\Z_p\to\N$ have the \emph{same profile} if there
exists a permutation $\tau\colon\Z_p\to\Z_p$ such that  
$h_1=h_2\circ\tau$.
For $1\le i\le n$ construct increasing functions
\[
 f_i,g_i\colon\Z_p\to\N
\]
such that $f_i$ and $\id_{D_i}*\id_{B_i}$ have the same profile and also that $g_i$ and
$\id_{D'_i}*\id_{B'_i}$ have the same profile. (We define the ordering on $\Z_p$ by
identifying $\Z_p$ with $\{0,1,\dots,p-1\}$.)
Note that, as these functions have the same profiles, \Cref{clm} implies that for any
$1\le i\le n$  and any $j\ge 0$ we have that
\[
 \sum_{x\in\Z_p} \max(0,f_i(x)-j) \le \sum_{x\in\Z_p} \max(0,g_i(x)-j).
\]

\begin{clm}\label{increasing}
 We have
 \[
  \sum_{x\in\Z_p}\prod_i\id_{D_i}*\id_{B_i}(x) \le \sum_{x\in\Z_p}\prod_i f_i(x),
 \]
 and
 \[
  \sum_{x\in\Z_p}\prod_i\id_{D'_i}*\id_{B'_i}(x) = \sum_{x\in\Z_p}\prod_i g_i(x).
 \]
\end{clm}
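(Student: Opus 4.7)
The plan is to prove both parts of \Cref{increasing} together by a layer-cake expansion, reducing each side to a sum of intersection sizes over all threshold tuples. For any non-negative integer-valued function $h$ on $\Z_p$ we have $h(x)=\sum_{j\ge 1}\id_{\{h\ge j\}}(x)$, so for any tuple $(h_1,\dots,h_n)$ of such functions,
\begin{equation*}
 \sum_{x\in\Z_p}\prod_i h_i(x) = \sum_{j_1,\dots,j_n\ge 1}\bigl|\medcap_i\{x\in\Z_p:h_i(x)\ge j_i\}\bigr|.
\end{equation*}
Applying this identity to both sides of either assertion therefore reduces the claim to a termwise comparison of intersection sizes.

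For the inequality, set $E_{i,j}=\{x:\id_{D_i}*\id_{B_i}(x)\ge j\}$ and $F_{i,j}=\{x:f_i(x)\ge j\}$. Since $f_i$ is increasing on $\Z_p\cong\{0,1,\dots,p-1\}$, each $F_{i,j}$ is a suffix interval, so $\medcap_i F_{i,j_i}$ is itself a suffix interval of size exactly $\min_i|F_{i,j_i}|$. Because $f_i$ and $\id_{D_i}*\id_{B_i}$ share profiles, $|E_{i,j}|=|F_{i,j}|$, and the trivial bound $|\medcap_i E_{i,j_i}|\le\min_i|E_{i,j_i}|$ gives
\begin{equation*}
 |\medcap_i E_{i,j_i}|\le\min_i|F_{i,j_i}|=|\medcap_i F_{i,j_i}|.
\end{equation*}
Summing over all $(j_1,\dots,j_n)$ yields the first assertion.

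For the equality, the crucial ingredient is \Cref{clm}(b), which asserts that \emph{all} of the functions $\id_{D'_i}*\id_{B'_i}$ are decreasing along the \emph{same} enumeration $t\mapsto x_t$ of $\Z_p$. Reversing that enumeration supplies a single bijection $\sigma\colon\Z_p\to\Z_p$, namely $\sigma(t)=x_{p-1-t}$, such that $\id_{D'_i}*\id_{B'_i}\circ\sigma$ is increasing on $\{0,1,\dots,p-1\}$ for every $i$ at once. Since $g_i$ is the unique increasing function on $\Z_p$ with the same profile as $\id_{D'_i}*\id_{B'_i}$, and precomposition with a bijection preserves profile, this forces $g_i=\id_{D'_i}*\id_{B'_i}\circ\sigma$ for every $i$. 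The change of variables $x=\sigma(t)$ in the sum then yields the claimed equality.

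The only genuinely non-formal point is recognising that \Cref{clm}(b) provides a \emph{common} monotone rearrangement handling all $i$ simultaneously, which is what lets the second statement be an identity rather than merely an inequality; once that is in hand, the first inequality is simply the observation that among collections of sets with prescribed cardinalities, nested suffix intervals realise the largest possible intersection.
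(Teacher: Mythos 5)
Your proposal is correct, and for the second assertion it is essentially the paper's argument verbatim: you identify the same permutation $\sigma$ (indeed $\sigma(t)=x_{p-1-t}$ is exactly the paper's $\sigma$), invoke \Cref{clm}(b) to conclude $\id_{D'_i}*\id_{B'_i}\circ\sigma$ is increasing for every $i$, and then use uniqueness of the increasing function with a given profile to get $g_i=\id_{D'_i}*\id_{B'_i}\circ\sigma$. For the first assertion the paper simply cites ``the rearrangement inequality'' (in its $n$-ary product form) without proof, whereas you give a self-contained derivation via the layer-cake identity $\sum_x\prod_i h_i(x)=\sum_{j_1,\dots,j_n\ge1}|\medcap_i\{h_i\ge j_i\}|$, observing that superlevel sets of the increasing $f_i$ are nested suffix intervals so that $|\medcap_i F_{i,j_i}|=\min_i|F_{i,j_i}|$, which dominates the trivial bound $|\medcap_i E_{i,j_i}|\le\min_i|E_{i,j_i}|$ after matching cardinalities via the profile hypothesis. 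This is a clean and correct proof --- in fact it is the standard proof of the very rearrangement inequality the paper invokes --- so the two approaches are substantively the same, yours just being more explicit on the first half.
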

\begin{proof}[Proof of \Cref{increasing}]
The first part follows from the rearrangement inequality. For the second part let
$\sigma\colon\Z_p\to\Z_p$ be the permutation given by
$(\sigma(p-1),\dots,\sigma(0))=(0,p-1,1,p-2,2,\dots,\frac{p+1}{2},\frac{p-1}{2})$.
It is enough to show the functional equality $g_i(x)=\id_{D'_i}*\id_{B'_i}(\sigma(x))$.
To see why this holds, note that the functions $g_i\colon\Z_p\to\N$ and
$\id_{D'_i}*\id_{B'_i}\circ\sigma\colon\Z_p\to\N$ have the same profile and are increasing.
Indeed, $g_i$ is increasing by construction and $\id_{D'_i}*\id_{B'_i}\circ\sigma$ is increasing
by \Cref{clm}(b); the two functions have the same profile by construction.
\end{proof}

We need one more ingredient to complete the proof of \Cref{main_tool}.

\begin{lem}\label{engine}
 Let $n\in\N$. For $1\le i\le n$ consider increasing functions
 \[
  f_i,g_i\colon\Z_p\to\N
 \]
 with the property that, for all $j\ge0$,
 \[
  \sum_{x\in\Z_p}\max(0,f_i(x)-j) \le \sum_{x\in\Z_p}\max(0,g_i(x)-j).
 \]
 Then it follows that
 \[
  \sum_{x\in\Z_p}\prod_i f_i(x) \le \sum_{x\in\Z_p}\prod_i g_i(x).
 \]
\end{lem}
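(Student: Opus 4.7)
The plan is a telescoping (hybridisation) argument: swap $f_k$ for $g_k$ one coordinate at a time, controlling each swap by a layer-cake decomposition together with a Legendre-type duality identity.

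First I would set $w_k(x) := \prod_{j<k} g_j(x)\prod_{j>k} f_j(x)$, which is non-negative and increasing on $\Z_p \equiv \{0,\dots,p-1\}$ because every $f_j$ and $g_j$ is. The telescoping identity
\[
 \prod_{j} g_j(x) - \prod_{j} f_j(x) \;=\; \sum_{k=1}^n \bigl(g_k(x) - f_k(x)\bigr)\,w_k(x)
\]
reduces the task to showing $\sum_{x\in\Z_p} \bigl(g_k(x) - f_k(x)\bigr)\,w_k(x) \ge 0$ for each $k$. Writing $w_k(x) = \sum_{r \ge 0}\id_{w_k(x) > r}$ and noting that each level set $\{x : w_k(x) > r\}$ is an upper interval $\{T_k(r),\dots,p-1\}$ (since $w_k$ is increasing), this further reduces to the assertion
\[
 \sum_{x \ge T} f_k(x) \;\le\; \sum_{x \ge T} g_k(x) \qquad\text{for every integer }T.
\]

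The last inequality I would then deduce from the Legendre-type identity, valid for any increasing $h\colon\Z_p \to \N$,
\[
 \sum_{x \ge T} h(x) \;=\; \min_{m \ge 0}\Bigl(m(p-T) + \sum_{x\in\Z_p}\max\bigl(0,\,h(x) - m\bigr)\Bigr).
\]
One verifies this by noting that the right-hand side, as a piecewise-linear convex function of $m$, attains its minimum where the cutoff $\{x : h(x) > m\}$ has cardinality $p-T$, with value equal to the sum of the top $p-T$ values of $h$. Applying this identity with $h = f_k$ and with $h = g_k$, and comparing via the hypothesis $\sum_x \max(0, f_k - m) \le \sum_x \max(0, g_k - m)$ for every $m \ge 0$, immediately yields the required tail inequality.

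The main obstacle is this final duality step: the hypothesis gives convex-truncation information in terms of $(\cdot - m)^+$, whereas the hybridisation naturally calls for partial tail sums $\sum_{x \ge T}$. The Legendre-type identity is the bridge, and is precisely where the full strength of the hypothesis (\emph{all} $m \ge 0$, not just $m=0$) is genuinely used.
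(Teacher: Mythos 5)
Your proof is correct and follows essentially the same route as the paper's: hybridise one coordinate at a time, use a layer-cake (equivalently, Abel summation) argument with the increasing weight to reduce to the tail-sum inequality $\sum_{x\ge T}f_i(x)\le\sum_{x\ge T}g_i(x)$, and derive that tail inequality from the hypothesis. Your Legendre-type identity is a clean repackaging of the paper's argument, which in effect takes $j=g_i(T)$ and exploits that, for increasing $g_i$, the quantity $\sum_{x}\max(0,g_i(x)-j)$ collapses exactly to $\sum_{x\ge T}(g_i(x)-j)$ while $\sum_{x}\max(0,f_i(x)-j)\ge\sum_{x\ge T}(f_i(x)-j)$ always holds.
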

\begin{proof}
We first claim that for any $i$ and any $r\in\Z_p$, $\sum_{x=r}^{p-1} f_i(x)\le \sum_{x=r}^{p-1} g_i(x)$.
Indeed, assume otherwise and pick the largest $r$ such that $\sum_{x=r}^{p-1} f_i(x)>\sum_{x=r}^{p-1} g_i(x)$.
Then clearly $f_i(r)>g_i(r)$, as otherwise the inequality would also hold at $r+1$.
By setting $j=g_i(r)$ we have
\[
 \sum_{x\in\Z_p}\max(0,f_i(x)-j)\ge\sum_{x=r}^{p-1} (f_i(x)-j)
 >\sum_{x=r}^{p-1}(g_i(x)-j)=\sum_{x\in\Z_p}\max(0,g_i(x)-j),
\]
a contradiction. Now for any increasing function $F\colon\Z_p\to\N$ we have
\begin{align*}
 \sum_{x\in\Z_p}F(x)f_i(x)
 &=F(0)\sum_{x=0}^{p-1}f_i(x)+\sum_{r=1}^{p-1}(F(r)-F(r-1))\sum_{x=r}^{p-1}f_i(x)\\
 &\le F(0)\sum_{x=0}^{p-1}g_i(x)+\sum_{r=1}^{p-1}(F(r)-F(r-1))\sum_{x=r}^{p-1}g_i(x)\\
 &=\sum_{x\in\Z_p}F(x)g_i(x).
\end{align*}
Hence, as $f_1f_2\dots f_{i-1}g_{i+1}\dots g_n$ is increasing,
\[
 \sum_{x\in\Z_p}f_1(x)\dots f_{i-1}(x)f_i(x)g_{i+1}(x)\dots g_n(x)\le
 \sum_{x\in\Z_p}f_1(x)\dots f_{i-1}(x)g_i(x)g_{i+1}(x)\dots g_n(x).
\]
Hence 
\[
 \sum_{x\in\Z_p}f_1(x)\dots f_{n-1}(x)f_n(x)\le \sum_{x\in\Z_p}f_1(x)\dots f_{n-1}(x)g_n(x)\le
 \dots\le \sum_{x\in\Z_p}g_1(x)\dots g_n(x),
\]
as required.
\end{proof}
This concludes the proof of \Cref{main_tool}.
\end{proof}

\section{Passing to the continuous case}\label{continuous}

From \Cref{main_tool} we can easily pass to the one-dimensional 
torus $\T=\R/\Z$. We begin with a useful lemma. 

\begin{lem}\label{lem_close}
 Let\/ $d,n\in\N$ and\/ $0\le\delta\le 1$, and for $i\in [n]$ let\/ $X_i$, $X'_i$, $Y_i$
 and\/ $Y'_i$ be measurable subsets of\/~$\T^d$ \textup{(}or $\R^d$\textup{)} with positive
 \textup{(}finite\textup{)} measure and\/ $|X_i\sdiff X'_i|\le \delta |X_i|$ and\/
 $|Y_i\sdiff Y'_i| \le \delta |Y_i|$. Then for points $y_i$, $y'_i$ chosen uniformly at
 random from the sets $Y_i$ and $Y'_i$, respectively, we have
 \[
  \Big|\E(|\medcup_i(X'_i+y'_i)|)-\E(|\medcup_i(X_i+y_i)|)\Big|\le 4n\delta \sum_i |X_i|.
 \]
\end{lem}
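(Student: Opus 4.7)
The plan is to interpolate from $(X_i,Y_i)$ to $(X'_i,Y'_i)$ in two stages via the triangle inequality: first replace each $X_i$ by $X'_i$ while keeping the sample $y_i\sim\mathrm{Unif}(Y_i)$, then change the sampling distribution from $\mathrm{Unif}(Y_i)$ to $\mathrm{Unif}(Y'_i)$. At each stage I will bound the change in expectation by a constant multiple of $\delta\sum_i|X_i|$.

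The first stage is a pointwise estimate. By translation-invariance of Lebesgue measure we have $|(X_i+y_i)\sdiff(X'_i+y_i)|=|X_i\sdiff X'_i|$ for every $y_i$, and the elementary inclusion $\medcup_iA_i\sdiff\medcup_iA'_i\subseteq\medcup_i(A_i\sdiff A'_i)$ gives
\[
 \Big||\medcup_i(X_i+y_i)|-|\medcup_i(X'_i+y_i)|\Big|\le\sum_i|X_i\sdiff X'_i|\le\delta\sum_i|X_i|,
\]
which survives taking expectations over any joint law of the $y_i$.

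For the second stage I use a standard hybrid argument. Set
\[
 \tilde F_k=\E\Big(\Big|\medcup_{i\le k}(X'_i+y'_i)\cup\medcup_{i>k}(X'_i+y_i)\Big|\Big),
\]
with all variables independent, and telescope $|\tilde F_n-\tilde F_0|\le\sum_k|\tilde F_k-\tilde F_{k-1}|$. Conditioning on everything except the $k$-th coordinate and writing $V$ for the union of the other translated sets, the map $t\mapsto|V\cup(X'_k+t)|$ has oscillation at most $|X'_k|$. Hence by the standard inequality $|\int h\,d\mu-\int h\,d\nu|\le\mathrm{osc}(h)\cdot d_{TV}(\mu,\nu)$,
\[
 |\tilde F_k-\tilde F_{k-1}|\le|X'_k|\cdot d_{TV}(\mathrm{Unif}(Y_k),\mathrm{Unif}(Y'_k)).
\]
A direct computation from $d_{TV}=1-\int\min(d\mu,d\nu)$ yields $d_{TV}(\mathrm{Unif}(Y_k),\mathrm{Unif}(Y'_k))=1-|Y_k\cap Y'_k|/\max(|Y_k|,|Y'_k|)\le\delta$, using $|Y_k\sdiff Y'_k|\le\delta|Y_k|$ in either case $|Y_k|\lessgtr|Y'_k|$. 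Since $|X'_k|\le(1+\delta)|X_k|\le 2|X_k|$, each summand is at most $2\delta|X_k|$; summing and combining with the first stage gives a bound of $3\delta\sum_i|X_i|$, well within the claimed $4n\delta\sum_i|X_i|$.

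The only step requiring genuine care is the coupling estimate in the second stage: the essential observation is that, after fixing the other $n-1$ coordinates, the $k$-th coordinate's contribution to the volume of the union has oscillation at most $|X'_k|$ rather than $\sum_i|X'_i|$. This is what keeps the final bound linear (rather than quadratic) in $n$.
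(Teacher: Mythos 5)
Your proof is correct, and it takes a genuinely different route from the paper's. The paper's proof couples $y_i$ with $y'_i$ so that $\Prb(y_i\ne y'_i)\le\delta$, then conditions on the event $E=\{y_i=y'_i\ \forall i\}$: on $E$ the difference is at most $\sum_i|X_i\sdiff X'_i|$, and off $E$ it is trivially at most $\sum_i(|X_i|+|X'_i|)$. Since $\Prb(E^c)\le n\delta$ requires a union bound over all $n$ coordinates, the paper's argument produces the factor $n$ in its final bound $(3n+1)\delta\sum_i|X_i|$. Your hybrid argument replaces the coordinates one at a time and exploits the key observation that, with all other coordinates fixed, the $k$-th coordinate perturbs the volume of the union by at most $|X'_k|$ — not $\sum_i|X'_i|$ — so that each telescoping step costs only $|X'_k|\cdot d_{TV}(\mathrm{Unif}(Y_k),\mathrm{Unif}(Y'_k))\le 2\delta|X_k|$. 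Combined with the pointwise first stage, this gives $3\delta\sum_i|X_i|$, strictly sharper than the stated $4n\delta\sum_i|X_i|$ and with no $n$-dependence at all. Both arguments rely on the same underlying estimate $d_{TV}(\mathrm{Unif}(Y_k),\mathrm{Unif}(Y'_k))=1-|Y_k\cap Y'_k|/\max(|Y_k|,|Y'_k|)\le\delta$ (equivalently, the maximal-coupling bound in the paper); the gain comes entirely from replacing the global union bound over $E^c$ with the per-coordinate oscillation bound. Since the paper only uses the lemma qualitatively (to pass to the limit), the improved constant makes no downstream difference, but your version is cleaner.
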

\begin{proof}
Consider a coupling of $y_i$ and $y'_i$ which maximizes the probability $\Prb(y_i=y'_i)$.
It is easy to see that there exists such a coupling where
\[
 \Prb(y_i\ne y'_i)\le \frac{|Y_i\cap Y'_i|}{\max(|Y_i|,|Y'_i|)}.
\]
Indeed, pick elements $a$, $b$, $c$ and $\theta$ uniformly at random from $Y_i\cap Y'_i$,
$Y_i\setminus Y'_i$, $Y'_i\setminus Y_i$ and $[0,1]$, respectively. Set $y_i=a$ if
$\theta\le |Y_i\cap Y'_i|/|Y_i|$ and set $y_i=b$ otherwise; similarly
set $y'_i=a$ if $\theta\le |Y_i\cap Y'_i|/|Y'_i|$ and set $y_i=c$ otherwise. Then
$y_i$ and $y'_i$ are uniform in $Y_i$ and $Y'_i$ respectively and
\[
 \Prb(y_i\ne y'_i)=\Prb\bigg(\theta \ge \frac{|Y_i\cap Y'_i|}{\max(|Y_i|,|Y_i'|)}\bigg)
 =1-\frac{|Y_i\cap Y'_i|}{\max(|Y_i|,|Y'_i|)} \le \frac{|Y_i\sdiff Y'_i|}{\max(|Y_i|,|Y'_i|)}.
\]
Let $E$ be the event that for every $i\in [n]$ we have $y_i=y'_i$. By the above estimate, we have 
\[
 \Prb(E^c) \le \sum_i \Prb(y_i\ne y_i') \le \sum_i \frac{|Y_i\cap Y'_i|}{\max(|Y_i|,|Y_i'|)}
\]
Conditioned on the event $E$, we have the inequality
\[
 \Big||\medcup_i(X'_i+y'_i)|-|\medcup_i(X_i+y_i)|\Big|\le \sum_i |X_i\sdiff X'_i|.
\]
In general, we  have the trivial inequality 
\[
 \Big||\medcup_i(X'_i+y'_i)|-|\medcup_i(X_i+y_i)|\Big|\le \sum_i (|X_i|+|X'_i|).
\]
Combining the last three estimates, we conclude
\begin{align*}
 \Big|\E(|\medcup_i(X'_i+y'_i)|)-\E(|\medcup_i(X_i&+y_i)|)\Big|=
 \Big|\E(|\medcup_i(X'_i+y'_i)|-|\medcup_i(X_i+y_i)|)\Big|\\
 &\le \Big(\sum_i |X_i|+|X'_i|\Big) \Prb(E) + \Big(\sum_i |X_i\sdiff X'_i|\Big)\Prb(E^c)\\
 &\le \Big(\sum_i |X_i|+|X'_i|\Big) \sum_i\frac{|Y_i\sdiff Y'_i|}{\max(|Y_i|,|Y'_i|)} + \sum_i |X_i\sdiff X'_i|\\
 &\le \Big(\sum_i |X_i|+2|X_i|\Big) \sum_i\delta+\sum_i \delta |X_i|\\
 &= (3n+1)\delta \sum_i |X_i|.
\end{align*}
The last inequality follows from the hypotheses that  $|X_i\sdiff X'_i|\le \delta |X_i|$
and $|Y_i\sdiff Y'_i|\le \delta |Y_i|$.
\end{proof}

\begin{cor}\label{main_corollary}
 Let $n\in\N$ and let\/ $A_1,\dots,A_n$ and\/ $B_1,\dots,B_n$ be sets of positive measure in~$\T$.
 Construct intervals $A'_1,\dots,A'_n$ and\/ $B'_1,\dots,B'_n$, each centred about~$0$,
 with the same measure as the sets $A_1,\dots,A_n$ and\/ $B_1,\dots,B_n$, respectively.
 Then for $b_i$ and\/ $b'_i$ chosen independently and uniformly at random from $B_i$ and\/ $B'_i$,
 respectively, we have 
 \[
  \E(|\medcup_i (A_i+b_i)|)\ge \E(|\medcup_i (A'_i+b'_i)|).
 \]
\end{cor}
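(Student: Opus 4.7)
The plan is to deduce the continuous corollary from the discrete \Cref{main_tool} by approximating the sets in two stages: first by finite unions of arcs, and then by a $p$-adic partition for a large prime~$p$. Along the way I will use \Cref{lem_close} repeatedly to control the errors introduced by each approximation.

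Fix $\eps>0$. By inner regularity I may replace each $A_i,B_i$ by a finite disjoint union of closed arcs $\check A_i\subseteq A_i$, $\check B_i\subseteq B_i$ with $|A_i\sdiff \check A_i|,\,|B_i\sdiff \check B_i|<\eps$; \Cref{lem_close} shows that both sides of the desired inequality change by at most $O_n(\eps)$. Hence I may assume each $A_i,B_i$ is a finite union of arcs, with a uniform bound $K$ on the total number of arcs. Now for a large prime~$p$, partition $\T$ into the arcs $I_j=[j/p,(j+1)/p)$ and set $\tilde A_i=\{j\in\Z_p:|A_i\cap I_j|\ge 1/(2p)\}$ and $\tilde B_i=\{j\in\Z_p:|B_i\cap I_j|\ge 1/(2p)\}$, with $\hat A_i=\medcup_{j\in\tilde A_i}I_j$ and $\hat B_i=\medcup_{j\in\tilde B_i}I_j$. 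Only the $O(K)$ arcs $I_j$ meeting the boundary of $A_i$ or $B_i$ contribute to $A_i\sdiff\hat A_i$ or $B_i\sdiff\hat B_i$, so both symmetric differences are $O(K/p)$. Apply \Cref{main_tool} to $\tilde A_i,\tilde B_i$, obtaining intervals $\tilde A'_i,\tilde B'_i\subseteq\Z_p$ of matching sizes satisfying the centring hypothesis (each centred within distance $1/2$ of~$0$, with signs chosen so that $\tilde A'_i+\tilde B'_i$ is centred at $0$ or $1/2$). The corresponding arcs $\hat A'_i=\medcup_{j\in\tilde A'_i}I_j$ and $\hat B'_i=\medcup_{j\in\tilde B'_i}I_j$ in $\T$ are then centred within $1/(2p)$ of~$0$ and have measures within $O(K/p)$ of $|A_i|,|B_i|$.

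The main task is to transfer the discrete inequality to the continuous expectations of $(\hat A_i,\hat B_i)$ and $(\hat A'_i,\hat B'_i)$. Decompose a uniform $b_i\in\hat B_i$ as $b_i=j_i/p+s_i$ with $j_i$ uniform in $\tilde B_i$ and $s_i$ uniform in $[0,1/p)$, all independent across~$i$. If instead the $s_i$ were all equal to a single $s\in[0,1/p)$, the translate $\medcup_i(\hat A_i+j_i/p+s)$ would literally be the lift of the discrete union $\medcup_i(\tilde A_i+j_i)$ shifted by~$s$, with Lebesgue measure exactly $p^{-1}|\medcup_i(\tilde A_i+j_i)|$. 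Since $\hat A_i$ has at most $K$ maximal arcs, $|\hat A_i\sdiff(\hat A_i+s_i)|\le 4K/p$, so the union bound shows that the true expectation on $\T$ differs from $p^{-1}$ times the discrete expectation in $\Z_p$ by $O(nK/p)$. The same estimate applies on the primed side. Combining these with \Cref{lem_close} to compare $(\hat A_i,\hat B_i)$ with $(A_i,B_i)$, and $(\hat A'_i,\hat B'_i)$ with the exact centred intervals $(A'_i,B'_i)$ of measures $|A_i|,|B_i|$, and finally letting $p\to\infty$ and then $\eps\to 0$, yields the inequality.

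The step I expect to be the main obstacle is precisely the transfer from the discrete $\Z_p$ sum to the continuous expectation on~$\T$: the independent fractional parts $s_i$ (one per index $i$) prevent a literal identification of the two quantities. It is exactly to keep the resulting boundary error $O(nK/p)$ vanishing that the preliminary reduction to the finite-arc case, which gives a uniform bound $K$ on the number of maximal arcs of each $\hat A_i$ and $\hat B_i$, is indispensable.
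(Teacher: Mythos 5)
Your proof is correct and follows essentially the same route as the paper's: approximate $A_i,B_i$ by finite unions of arcs (using \Cref{lem_close} to control the error), discretize onto the $p$-adic grid $\{j/p\}$ so that \Cref{main_tool} applies, transfer the discrete inequality back to $\T$ with an $O(nK/p)$ boundary error, and let $p\to\infty$ then $\eps\to0$. The only cosmetic differences are that the paper builds the rational-endpoint approximants directly (with a bound $k\le\eps p$ on the number of arcs) rather than via a $\tfrac{1}{2p}$-threshold, and uses the ceiling map $y_i=\lceil pd_i\rceil$ where you use the explicit decomposition $b_i=j_i/p+s_i$; these are the same idea.
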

\begin{proof}
We identify $\T=\R/\Z$ with $(-\frac{1}{2},\frac{1}{2}]$, and fix a small parameter $\eps$ with $0<\eps<1$.
As all $A_i$, $B_i$, $A'_i$ and $B'_i$ are measurable, we can approximate them with sets
$C_i$, $D_i$, $C'_i$ and~$D'_i$, respectively, so that for $i\in [n]$ we have 
\[
 C_i=\bigcup_{j\in [k]}\big(\tfrac{q_{i,j}}{p},\tfrac{r_{i,j}}{p}\big],\qquad
 D_i=\bigcup_{j\in [k]}\big(\tfrac{s_{i,j}}{p},\tfrac{t_{i,j}}{p}\big],\qquad
 C'_i=\big(\tfrac{-h_i}{p},\tfrac{h_i}{p}\big]\quad\text{ and }\quad
 D'_i=\big(\tfrac{-\ell_i}{p},\tfrac{\ell_i}{p}\big],
\]
where $p$ is a prime, $k\in \N$ with $k\le \eps p$, $h_i,\ell_i\in\N$ and
$-\frac{p}{2}<q_{i,1}<r_{i,1}<q_{i,2}<r_{i,2}<\dots< r_{i,k}<\frac{p}{2}$
and $-\frac{p}{2}<s_{i,1}<t_{i,1}<\dots< s_{i,k}<t_{i,k}<\frac{p}{2}$ are integers with
$2h_i=\sum_j (r_{i,j}-q_{i,j})$ and $2\ell_i=\sum_j(t_{i,j}-s_{i,j})$, and finally
$|C_i\sdiff A_i|\le \eps|A_i|$, $|D_i\sdiff B_i|\le \eps|B_i|$,
$|C'_i\sdiff A'_i|\le \eps|A_i'|$ and $|D'_i\sdiff B'_i|\le \eps|B_i'|$.

\begin{clm}\label{clm_approx}
 For $d_i$ and\/ $d'_i$ chosen uniformly at random from $D_i$ and\/ $D'_i$ respectively, we have 
 \[
  \E(|\medcup_i (C_i+d_i)|)\ge \E(|\medcup_i (C'_i+d'_i)|) - 2n\eps.
 \]
\end{clm}
\begin{proof}
For $i\in [n]$, construct subsets of $\Z_p$
\[
 X_i=\bigcup_{j\in [k]}\{q_{i,j}+1,\dots,r_{i,j}\}\quad\text{and}\quad
 Y_i=\bigcup_{j\in [k]}\{s_{i,j}+1,\dots,t_{i,j}\}
\]
and
\[
 X'_i=\{1-h_i,\dots,h_i\}\quad\text{and}\quad Y'_i=\{1-\ell_i,\dots,\ell_i\}.
\]
Note that $|X_i|=|X'_i|$ and $|Y_i|=|Y'_i|$. Moreover, $X'_i$ and $Y'_i$
are discrete intervals centred around $1/2$. Finally, write
\[
 y_i=\lceil pd_i\rceil\quad\text{and}\quad y'_i=\lceil p d'_i\rceil.
\]

Note that if $d_i$, $d'_i$ are chosen uniformly at random from $D_i$, $D'_i$,
respectively, then $y_i$, $y'_i$ are chosen uniformly at random from $Y_i$, $Y'_i$,
respectively. Moreover, for each such choice 
\[
 \big||\medcup_i (C_i+d_i)| - \tfrac{1}{p}|\medcup_i(X_i+y_i)|\big| \le \tfrac{kn}{p}
 \quad\text{and}\quad
 \big||\medcup_i (C'_i+d'_i)| - \tfrac{1}{p}|\medcup_i(X'_i+y'_i)|\big| \le \tfrac{kn}{p}.
\]

By \Cref{main_tool}, we have that
\[
 \E(|\medcup_i (X_i+y_i)|) \ge \E(|\medcup_i (X'_i+y'_i) |).
\]
Combining the last two inequalities, and recalling $k \le \eps p$, we conclude that
\[
 \E(|\medcup_i (C_i+d_i)|) \ge \E(|\medcup_i (C'_i+d'_i)|) - \tfrac{2kn}{p}
 \ge \E(|\medcup_i (C'_i+d'_i)|) - 2n\eps.
 \qedhere
\]
\end{proof}

By \Cref{lem_close} and the fact that $|C_i\sdiff A_i|\le\eps|A_i|$,
$|D_i\sdiff B_i|\le\eps |B_i|$, $|C'_i\sdiff A'_i|\le\eps|A'_i|$
and $|D'_i\sdiff B'_i|\le\eps|B'_i|$, for $b_i$, $b'_i$, $d_i$ and $d'_i$
chosen uniformly at random from $B_i$, $B'_i$, $D_i$ and $D'_i$, respectively, we have
\[
 \Big|\E(|\medcup_i (C_i+d_i)|) - \E(|\medcup_i (A_i+b_i)|)\Big|
 \le 4n\eps \sum_i |A_i| \le 4n^2\eps
\]
and
\[
 \Big|\E(|\medcup_i (C'_i+d'_i)|) - \E(|\medcup_i (A'_i+b'_i)|)\Big|
 \le 4n\eps \sum_i|A'_i| \le 4n^2\eps.
\]

Combining the last inequalities with \Cref{clm_approx}, we have
\[
 \E(|\medcup_i (A_i+b_i)|) \ge \E(|\medcup_i (A'_i+b'_i)|) - 2n\eps - 8n^2\eps.
\]
As $\eps>0$ can be chosen arbitrarily small, we conclude that
\[
 \E(|\medcup_i (A_i+b_i)|) \ge \E(|\medcup_i (A'_i+b'_i)|).
 \qedhere
\]
\end{proof}

\section{Derivation of \Cref{main_torus}}\label{torus}

For $u\in\T^{d-1}$ and any subset $X$ of $\T^d$, write
\[
 X_u=\{x:(x,u)\in X\},
\]
for the section of $X$ over~$u$, where we identify $\T^d$ with $\T\times\T^{d-1}$. 
We define the \emph{Steiner symmetrization} of a measurable set $X$ in $\T^d$ as
the measurable subset $S_d(X)$ of $\T^d$ which has the property that for every
$u\in\T^{d-1}$, $S_d(X)_u$ is a closed interval in $\T$ centred around the origin
with $|S_d(X)_u|=|X_u|$. 
For the sake of definiteness, when the closed interval could be either a single point or the empty set we choose it to
be the empty set. This is purely for convenience, and any other choice would make no difference to any of the arguments.
An alternative would have been to be more careful with this choice, as a way of ensuring that closed sets always do map
to closed sets, for example, but there is no need for this.

More generally, given an integer $d\times d$ matrix $M$ with $\det(M)=1$, i.e., $M\in SL_d(\Z)$,
the Steiner symmetrization $S_M$ transforms a measurable set $X$ in $\T^d$ into the set $X^M=S_M(X)$
in $\T^d$ where $X^M=S_M(X)=M^{-1}(S_d(M(X)))$.
Note that $M$ induces a measure preserving bijection $\T^d\to\T^d$.

\begin{lem}\label{second'}
 For every $M\in SL_d(\Z)$ and for any measurable subsets $X$ and\/ $Y$ of\/ $\T^d$ we have
 \[
  |X^M \sdiff Y^M| \le |X\sdiff Y|.
 \]
\end{lem}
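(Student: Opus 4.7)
The plan is to reduce the claim to the case $M = I$, and then use a Fubini decomposition to push the problem down to a one-dimensional statement about symmetric differences of centred intervals in $\T$.

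First I would use that $M \in SL_d(\Z)$ induces a measure-preserving bijection $\T^d \to \T^d$. Consequently, for measurable $X,Y \subseteq \T^d$,
\[
 |X^M \sdiff Y^M|
 = |M^{-1}(S_d(MX)) \sdiff M^{-1}(S_d(MY))|
 = |S_d(MX) \sdiff S_d(MY)|,
\]
while also $|X \sdiff Y| = |MX \sdiff MY|$. So, writing $X' = MX$ and $Y' = MY$, it suffices to prove the statement for the plain Steiner symmetrization: $|S_d(X') \sdiff S_d(Y')| \le |X' \sdiff Y'|$.

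Next I would apply Fubini to both sides with respect to the decomposition $\T^d = \T \times \T^{d-1}$. Since $S_d(X')_u$ is by definition the closed interval in $\T$ centred at $0$ of length $|X'_u|$ (with the convention that a degenerate interval is empty), for each $u \in \T^{d-1}$ the sets $S_d(X')_u$ and $S_d(Y')_u$ are two centred closed intervals of lengths $a = |X'_u|$ and $b = |Y'_u|$ in $\T$. Since a centred interval of length $a \le 1$ is contained in a centred interval of length $b \le 1$ whenever $a \le b$ (and similarly in the reverse case), a quick check of the degenerate cases shows that in all cases
\[
 |S_d(X')_u \sdiff S_d(Y')_u| = \bigl||X'_u| - |Y'_u|\bigr|.
\]

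Finally, the ordinary triangle inequality for the measure on $\T$ gives $\bigl||X'_u| - |Y'_u|\bigr| \le |X'_u \sdiff Y'_u|$ for every $u$. Integrating over $u \in \T^{d-1}$ and using Fubini on both sides yields
\[
 |S_d(X') \sdiff S_d(Y')|
 = \int_{\T^{d-1}} \bigl||X'_u| - |Y'_u|\bigr|\,du
 \le \int_{\T^{d-1}} |X'_u \sdiff Y'_u|\,du
 = |X' \sdiff Y'|,
\]
which is the required bound. There is no real obstacle in this argument; the only subtlety is the verification of the slice-wise equality for symmetric differences of centred intervals, and in particular making sure the edge cases ($a=0$, $a=1$, and $a=b$) all fall under the same formula given the stated convention that degenerate centred intervals are taken to be empty.
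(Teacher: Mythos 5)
Your proof is correct and matches the paper's proof in all essentials: reduce to $M=I$ via the measure-preserving map, then observe that Steiner symmetrization acts fibre-by-fibre so the claim reduces to the one-dimensional fact that two centred intervals have symmetric difference $\bigl||X'_u|-|Y'_u|\bigr| \le |X'_u \sdiff Y'_u|$. The paper phrases the fibre-wise step as reducing to $d=1$ rather than writing the Fubini integral explicitly, but this is a presentational difference only.
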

\begin{proof}
As $\det(M)=1$, for any measurable sets $X'$ and $Y'$ we have
\[
 |M(X') \sdiff M(Y')|=|M(X'\sdiff Y')|=|X'\sdiff Y'|.
\] 
Therefore, it is enough to show the result when $M$ is the identity:
\[
 |S_d(X) \sdiff S_d(Y)| \le |X\sdiff Y|.
\]
As $S_d$ acts in each section $\T \times \{u\}$ independently, it is enough
to show the result in dimension $d=1$. So assume $d=1$ and note that
\[
 |S_1(X) \sdiff S_1(Y)| = \big||S_1(X)| - |S_1(Y)|\big| = \big||X|-|Y|\big| \le |X\sdiff Y|.
 \qedhere
\]
\end{proof}

We now start to work towards our result, but just in two dimensions. As we will see,
this is where the work actually comes, and afterwards it will be a straightforward
matter to deduce the result in higher dimensions.

\begin{lem}\label{compress_limit_2}
 There exist matrices $M_j\in SL_2(\Z)$ for $j \in \N$ such that the following holds.
 Let\/ $X$ be a measurable set in $\T^2$ and let\/ $X'=[-c,c]\times\T$
 be a band in $\T^2$ with the same area as~$X$. Define recursively the sets 
 $X_0=X$ and\/ $X_{j+1}=X_j^{M_j}$, each $j \in \N$. Then
 \[
  \lim_{j\to\infty} |X_j \sdiff X'| = 0
 \]
 uniformly in\/~$X$.
\end{lem}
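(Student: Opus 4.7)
My plan is to choose the sequence $(M_j)$ so that each lower-shear $T_n := \begin{pmatrix} 1 & 0 \\ n & 1 \end{pmatrix}$, $n\in\Z$, appears infinitely often. Each $T_n$ preserves the band: indeed $T_n(X')_{y_2} = [-c,c]$ for every $y_2\in\T$, so $S_{T_n}(X')=X'$. By \Cref{second'} applied with $Y = X'$, the sequence $d_j := |X_j\sdiff X'|$ is non-increasing and converges to some $L\ge 0$; it remains to show $L = 0$.

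A useful reformulation is the identity
\[
|X\sdiff X'| = \|\rho_X - \id_{[-c,c]}\|_{L^1(\T)},\qquad \rho_X(x_1):=|\{x_2:(x_1,x_2)\in X\}|,
\]
valid because the $x_1$-fibres of $X'$ are either $\T$ (on $[-c,c]$) or empty. It therefore suffices to prove $\rho_{X_j}\to\id_{[-c,c]}$ in $L^1(\T)$. After each $S_{T_0}=S_I$ step, $X_j$ has centred $x_1$-slices, so can be written as $\{(x_1,x_2):|x_1|\le h_j(x_2)/2\}$ for some $h_j\colon\T\to[0,1]$ with $\int h_j = 2c$; the target corresponds to $h_j\equiv 2c$.

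The decisive step is a compactness-and-structural argument. Pass to a subsequence along which $X_{j_k}\to Y$ in $L^1(\T^2)$, with $|Y\sdiff X'|=L$; then, taking a sub-subsequence along which $M_{j_k}=T_n$ is constant for a fixed $n$, the $L^1$-continuity of $S_{T_n}$ (itself a consequence of \Cref{second'}) gives $X_{j_k+1}\to S_{T_n}(Y)$ in $L^1$, and since $d_{j_k+1}\to L$ we conclude $|S_{T_n}(Y)\sdiff X'| = L = |Y\sdiff X'|$ for every $n\in\Z$. The equality case of the slice-wise rearrangement inequality underlying \Cref{second'} then forces the following structural condition on $Y$: for every $n\in\Z$ and almost every $u\in\T$, the slice $(T_n Y)_u$ is nested with $[-c,c]$ (one of the two contains the other). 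Using in addition $S_I(Y)=Y$, so that $Y=\{|x_1|\le h(x_2)/2\}$, a Fourier-type analysis of the shear-nesting condition (whose first-order obstruction at the $m$-th Fourier mode of $h-2c$ is controlled by $\cos(2\pi mnc)$, which is non-trivial for appropriate $n$) forces $h$ to be constant a.e., hence $Y = X'$ and $L=0$.

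The main obstacles I expect are two. First, producing the $L^1$-limit $Y$: characteristic functions of subsets of $\T^2$ of prescribed measure are not sequentially $L^1$-compact, so one must exploit the structural restriction imposed by $S_I$---for instance, passing to the symmetric decreasing rearrangement of the slice function $h_j$, which is $L^1(\T)$-compact by Helly's selection theorem for monotone functions, and lifting back to the set level. Second, the uniformity in $X$ is not automatic and requires an explicit rate. On the Fourier side, a direct first-order computation shows that one step of $S_I\circ S_{T_n}$ contracts the $m$-th Fourier mode of $h_j-2c$ by $\cos^2(2\pi mnc)$; since for any $c\in(0,1/2)$ the product $\prod_n\cos^2(2\pi mnc)$ vanishes when $n$ cycles through $\Z$ (by equidistribution when $c$ is irrational and by elementary arithmetic when $c$ is rational), this yields quantitative decay of every Fourier mode at a rate depending only on $c$, enough to give uniformity in $X$.
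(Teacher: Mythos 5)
You propose a genuinely different route from the paper: a compactness-and-equality-case argument, supplemented by a separate Fourier estimate for uniformity. The paper instead obtains a direct quantitative gain of $2j^{-1}\eps$ per symmetrization using the \emph{growing} shear $M_{2j+1}(x,y)=(x,y+jx)$, and sums the harmonic series to bound the number of steps by a quantity depending only on $\eps$, so that uniformity in $X$ comes for free. Your proposal has genuine gaps that the paper's strategy sidesteps.

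First, the $L^1$-compactness step is not sound as stated. Passing to the decreasing rearrangement of the slice function $h_j$ and invoking Helly's theorem gives a convergent subsequence of \emph{rearrangements}, but rearrangement discards the positions of the slices, so this does not yield an $L^1$-limit set $Y$ of the $X_{j_k}$; and weak-$*$ limits of characteristic functions of fixed-measure sets are in general not characteristic functions, so there is no free lifting ``back to the set level.'' Second, even granting a limit $Y$ with $|Y\sdiff X'|=L$ and that the nesting conditions force $Y=X'$ (which you only sketch), this shows $|X_j\sdiff X'|\to 0$ for each fixed $X$ but not uniformly. Your proposed fix -- contraction of the $m$-th Fourier mode of $h_j-2c$ by $\cos^2(2\pi mnc)$ under $S_I\circ S_{T_n}$ -- contains a concrete error: for rational $c=p/q$ and $m$ a multiple of $q$, $\cos^2(2\pi mnc)=1$ for every $n$, so $\prod_n\cos^2(2\pi mnc)=1$, not $0$, contradicting your claim that the product vanishes ``by elementary arithmetic when $c$ is rational.'' In addition, this is a first-order linearization about the fixed point $h\equiv 2c$, and there is no argument bringing an arbitrary $h_0$ into that regime. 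By contrast, the paper shows directly that if $|X_{2j+1}\sdiff X'|>4\eps$ and $j\ge 2/\eps$ then $|X_{2j+2}\sdiff X'|\le |X_{2j}\sdiff X'|-2j^{-1}\eps$, and the divergence of $\sum j^{-1}$ then gives a cutoff index $j_0(\eps)$ independent of $X$, which is exactly the uniformity you need.
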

Here, as usual, by `uniformly in $X$' we mean that for each $\eps>0$ there exists
$n$ such that for any $X$ the value of $|X_j \sdiff X'|$ is at most $\eps$ for all $j \ge n$.
\begin{proof}
For $j \in \N$ define $M_j\in SL_2(\Z)$ so that $M_{2j}(x,y)=(x,y)$ and $M_{2j+1}(x,y)=(x,y+jx)$
and note that $M_j(X')=X'$ and hence $(X')^{M_j}=X'$ for all~$j$.
Hence, by \Cref{second'}, $|X_j \sdiff X'|$ decreases as $j$ increases.

Fix $X$ with $|X|=|X'|=2c$ and assume for some $j>0$, $|X_{2j+1}\sdiff X'|>4\eps$.
Note that we must have $2c\in[2\eps,1-2\eps]$ otherwise we automatically have
$|X_{2j+1} \sdiff X'|\le4\eps$. For each $y\in \T$ we have $X'_y=[-c,c]$ and
$X_{2j+1,y}=S_2(X_{2j})_y=[-c(y),c(y)]$ is also an interval in $\T$ centred
at the origin whose width in general will depend on~$y$. Define the sets
\[
 S=\{ y \in \T : c(y) \le c-\eps/2\}\quad\text{and}\quad
 B=\{ y \in \T : c(y) \ge c+\eps/2\}.
\]
Note that $|X_{2j+1}\sdiff X'|>4\eps$ and $|X_{2j+1}|=|X'|$ implies
$|X'\setminus X_{2j+1}|$, $|X_{2j+1}\setminus X'|>2\eps$, and hence
\[
 |S|\ge \eps \quad \text{and}\quad |B| \ge \eps.
\]

By the definition of the Steiner symmetrization $S_{M_{2j+1}}$, we note that for every
$t \in \T$ the set $I'_t=\{x:(x,t-jx)\in X_{2j+2}\}$ is an interval in $\T$ centred
at the origin with the same size as the set $I_t=\{x:(x,t-jx)\in X_{2j+1}\}$. Moreover,
for every $t \in \T$ we have that the set $\{x:(x,t-jx)\in X'\}$ is equal to $[-c,c]$. which we will call
$I$ for convenience. 

\begin{clm}\label{increment}
 For every $j\ge 2/\eps$ and $t\in \T$ we have 
 \[
  |I'_t\cap I| \ge |I_t\cap I|+ j^{-1}\eps,
 \]
\end{clm}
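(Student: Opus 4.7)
The plan is to reduce the claim to two symmetric ``deficit'' inequalities. Since $I$ and $I'_t$ are both closed intervals in $\T$ centered at the origin, and $|I'_t|=|I_t|$, the smaller of the two is contained in the larger, so $|I'_t\cap I|=\min(|I_t|,|I|)$. Subtracting $|I_t\cap I|$ from both sides yields
\[
 |I'_t\cap I|-|I_t\cap I|=\min\bigl(|I_t\setminus I|,\,|I\setminus I_t|\bigr),
\]
so it suffices to prove both $|I_t\setminus I|\ge\eps/j$ \emph{and} $|I\setminus I_t|\ge\eps/j$.

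For these two bounds I would use the sets $B$ and $S$ respectively. The key observation is that, for any fixed $t$, the map $\phi\colon\T\to\T$ given by $\phi(x)=t-jx\pmod 1$ is measure-preserving and exactly $j$-to-one, since $j$ is a positive integer. On any arc of $\T$ of length at least $1/j$, the map $\phi$ is surjective, and for any measurable $T\subseteq\T$ the preimage of $T$ inside such an arc has measure at least $|T|/j$. The hypothesis $j\ge 2/\eps$ is exactly what guarantees $j\cdot(\eps/2)\ge 1$, so arcs of length $\eps/2$ are long enough for this bound.

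To derive $|I_t\setminus I|\ge\eps/j$ I would take $J_B=(c,c+\eps/2]$, which lies inside $\T$ since $2c\le 1-2\eps$ gives $c+\eps/2<1/2$. Unpacking the definition of $I_t$, for any $x\in J_B$ with $\phi(x)\in B$ we get $c(\phi(x))\ge c+\eps/2\ge x=|x|$, so $x\in I_t$; meanwhile $x>c$ forces $x\notin I$. The measure of $\phi^{-1}(B)\cap J_B$ is at least $|B|/j\ge\eps/j$, giving the desired bound. For $|I\setminus I_t|\ge\eps/j$ I would argue symmetrically with $J_S=(c-\eps/2,c]$ and $S$: for $x\in J_S$ with $\phi(x)\in S$ we have $c(\phi(x))\le c-\eps/2<x=|x|$, so $x\notin I_t$, while $x\le c$ gives $x\in I$.

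The main obstacle I expect is the measure-theoretic bookkeeping for the preimage-in-an-arc estimate, which crucially uses both that $j$ is a positive integer (so $\phi$ is genuinely $j$-to-one and measure preserving) and that $j\ge 2/\eps$ (so that one full wrap around $\T$ fits inside an arc of length $\eps/2$). Beyond that, everything is a direct unpacking of the definition of $I_t$ against the ``small-section'' set $S$ and the ``big-section'' set $B$, combined with the elementary identity $|I'_t\cap I|=\min(|I_t|,|I|)$ for centered intervals.
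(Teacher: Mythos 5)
Your proof is correct and takes essentially the same approach as the paper: both reduce the claim to showing $|I_t\setminus I|\ge\eps/j$ and $|I\setminus I_t|\ge\eps/j$ via the observation that centred intervals of equal length give $|I'_t\cap I|=\min(|I_t|,|I|)$, and both exploit the $j$-fold measure scaling of $x\mapsto t-jx$ on short arcs just above and just below $c$ to locate pieces of $B$- and $S$-preimages there. The only cosmetic difference is that you use arcs of length $\eps/2$ while the paper uses arcs of length exactly $1/j$; since $j\ge 2/\eps$ the estimates are identical.
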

\begin{proof}
As $I$ and $I'_t$ are centred intervals with $|I'_t|=|I_t|$, it is enough to show that
\[
 | I \setminus I_t | \ge j^{-1}\eps
 \quad\text{and}\quad
 | I_t \setminus I | \ge j^{-1}\eps.
\]

Consider the functions $f \colon [c-j^{-1},c) \to \T$ and $g\colon (c,c+j^{-1}] \to \T$
defined by $f(x)=g(x)=t-jx$ and note that these functions are bijective and scale
the measure by a factor~$j$. Hence if we set $S'=f^{-1}(S)$ and $B'=g^{-1}(B)$ then
\[
 |S'|\ge j^{-1}\eps \quad\text{and}\quad |B'|\ge j^{-1}\eps.
\]

As $j^{-1}\le \eps/2$ and $2c\in[2\eps,1-2\eps]$, we have
$c-j^{-1}>-c$ and $c+j^{-1}<1-c$, so $S' \subseteq I=[-c,c]$ and $B' \cap I=\emptyset$
in~$\T$. Therefore, it is enough to show that
\[
 S' \cap I_t = \emptyset
 \quad\text{and}\quad
 B' \subseteq I_t.
\]

Assume for a contradiction that
\[
 x \in S' \cap I_t,
\]
and note that if we let $y=t-jx$ then this assumption is equivalent to
\[
 x \in S' \quad\text{and}\quad (x,y) \in X_{2j+1}.
\]
As $x \in S'$ and $y=f(x)$, we have $y \in S$. By construction of~$S$,
we have $x \in [-c+\eps/2, c-\eps/2]$. However, by construction
$x \in S' \subseteq [c-j^{-1},c)$. This gives the desired contradiction
as $j^{-1}\le \eps/2$. We conclude that
\[
 S' \cap I_t = \emptyset.
\]
Analogously, we conclude that
\[
 B' \subseteq I_t.
\]
This finishes the proof of the claim.
\end{proof}

To conclude our proof, note that \Cref{increment} implies that for every $j\ge 2/\eps$
either $|X_{2j+1}\sdiff X'|\le 4\eps$, or 
\[
 |X_{2j+2} \cap X'| \ge |X_{2j+1} \cap X'| + j^{-1}\eps,
\]
which is equivalent to
\[
 |X_{2j+2} \sdiff X'| \le |X_{2j+1} \sdiff X'| - 2j^{-1}\eps.
\]
Again by \Cref{second'}, $|X_j \sdiff X'|$ decreases as $j$ increases, which implies that
\[
 |X_{2j+2} \sdiff X'| \le |X_{2j} \sdiff X'| - 2j^{-1}\eps.
\]
As $\sum_j j^{-1}$ diverges, there must exist some $j_0$ depending only on $\eps$
such that $|X_j\sdiff X'|\le 4\eps$ for all $j\ge j_0$, and we note that $j_0$
can be chosen independently of~$X$. We conclude that 
\[
 \lim_{j \to \infty} |X_j \sdiff X'| =0\quad\text{uniformly in }X.
 \qedhere
\]
\end{proof}

\begin{cor}\label{compress_limit_d}
 There exists matrices $M_j\in SL_d(\Z)$ for $j \in \N$ such that the following holds.
 Let\/ $X$ be a measurable set in $\T^d$ and let\/ $X'=[-c,c]\times\T^{d-1}$
 be a band in $\T^d$ with the same volume as~$X$. Define recursively 
 the sets $X_0=X$ and\/ $X_{j+1}=X_j^{M_j}$, each $j \in \N$. Then
 \[
  \lim_{j\to\infty} |X_j \sdiff X'| = 0
 \]
 uniformly in $X$.
\end{cor}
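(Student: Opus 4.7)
The plan is to proceed by induction on the dimension~$d$, with base case $d=2$ given by Lemma~\ref{compress_limit_2}. For the inductive step I would combine the sequence $(N_j) \subseteq SL_{d-1}(\Z)$ supplied by the inductive hypothesis with the $2$-dimensional sequence $(M_j) \subseteq SL_2(\Z)$ from Lemma~\ref{compress_limit_2}. Inspecting Lemma~\ref{compress_limit_2} and unwinding the induction, every one of these matrices has first row $(1,0,\ldots,0)$. I would embed them into $SL_d(\Z)$ as $\tilde N_j = \bigl(\begin{smallmatrix} N_j & 0 \\ 0 & 1\end{smallmatrix}\bigr)$ and as $\hat M_j$ acting as $M_j$ on the pair $(x_1,x_d)$ and trivially on $x_2,\ldots,x_{d-1}$. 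Both still fix $x_1$ and hence fix $X' = [-c,c] \times \T^{d-1}$, so by \Cref{second'} any application of either family to the running set $X_t$ can only decrease $|X_t \sdiff X'|$.

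I would build the sequence of matrices in batches. In batch~$k$ I apply $\tilde N_1,\ldots,\tilde N_{L_k}$ and then $\hat M_1,\ldots,\hat M_{L_k}$, where $L_k$ is chosen (using the uniform convergence in both the inductive hypothesis and Lemma~\ref{compress_limit_2}) so that after $L_k$ steps either sequence yields an $\eps_k$-approximation to the appropriate band, with $\eps_k \to 0$. The first phase makes each $x_d$-section (in $\T^{d-1}$) of the current set $Z$ lie within $\eps_k$ of a centred $(d-1)$-band whose half-width equals half the section area; crucially, $\tilde N_j$ preserves each $x_d$-section setwise, so these targets depend only on the section areas. Writing $Y(Z) = \{(x_1,\ldots,x_d) : |x_1| \le c(x_d)\}$ with $2c(x_d) = |Z_{x_d}|$, integration over $x_d$ then gives $|Z \sdiff Y(Z)| \le \eps_k$.

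The key property of the second phase is that $Y(Z)$ is translation-invariant in $(x_2,\ldots,x_{d-1})$, so it factors as $\tilde Y \times \T^{d-2}$ for some $\tilde Y \subseteq \T^2$ living in the $(x_1,x_d)$-plane, and a direct computation shows that $\hat M_j$ acts on $Y(Z)$ exactly as $M_j$ acts on $\tilde Y$. Lemma~\ref{compress_limit_2} therefore yields $|Y(Z)^{\hat M_1 \cdots \hat M_{L_k}} \sdiff X'| \le \eps_k$; combined with the iterated contraction $|Z^{\hat M_1 \cdots \hat M_{L_k}} \sdiff Y(Z)^{\hat M_1 \cdots \hat M_{L_k}}| \le |Z \sdiff Y(Z)| \le \eps_k$ from \Cref{second'}, the triangle inequality yields $|X_t \sdiff X'| \le 2\eps_k$ at the end of batch~$k$, and monotonicity preserves this bound for all later~$t$. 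Hence $|X_t \sdiff X'| \to 0$ uniformly in~$X$. The main obstacle is the interleaving itself: the matrices $\hat M_j$ only help once the current set is already close to a fibered band, so one cannot just alternate single matrices; instead one must drive the fibration error down in each $\tilde N$-phase and transfer the resulting estimate to the running set via \Cref{second'}.
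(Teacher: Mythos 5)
Your proof is correct and is essentially the same as the paper's: both reduce the $d$-dimensional statement to repeated fiberwise applications of Lemma~\ref{compress_limit_2} in the $(x_1,x_k)$-planes, using \Cref{second'} to guarantee that $|X_j\sdiff X'|$ never increases, and both control the accumulated error via ever-larger batches and the triangle inequality. The paper organizes the batches as a flat round-robin over $k=2,\dots,d$ while you organize them as an induction on $d$, but once the induction is unwound the two schemes use the same matrices and the same estimates, so this is a bookkeeping difference rather than a different route.
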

\begin{proof}
Let $M_{j,k}\in SL_d(\Z)$ be the matrix that applies the $M_j$ from \Cref{compress_limit_2}
to coordinates 1 and $k$ and leaves all other coordinates fixed.
Using the sequence $M_{j,2}$, $j=1,\dots,n_1$, we have that for sufficiently large
$n_1$, $|X_{n_1}\sdiff X^{(2)}|<\eps$, where $X^{(2)}$ is a set with $|X^{(2)}|=|X|$
that is independent of the 2nd coordinate, i.e., $(x_1,x_2,\dots,x_d)\in X^{(2)}$
iff $(x_1,x'_2,\dots,x_d)\in X^{(2)}$ for any $x_2,x'_2\in\T$.
Continuing with the sequence $M_{j,3}$, $j=1,\dots,n_1$, and recalling \Cref{second'},
we have that for sufficiently large $n_1$,
$|X_{2n_1}\sdiff X^{(3)}|\le |X_{n_1}\sdiff X^{(2)}|+|X^{(2)}_{n_1}\sdiff X^{(3)}|<2\eps$,
where $X^{(2)}_{n_1}$ is the result of applying these symmetrizations to $X^{(2)}$ and
$X^{(3)}$ is independent of both 2nd and 3rd coordinates. Continuing this process for each coordinate
in turn gives a sequence such that $|X_{(d-1)n_1}\sdiff X^{(d)}|<(d-1)\eps$ for sufficiently
large~$n_1$. But clearly $X^{(d)}=X'$. Concatenating this sequence of matrices with
progressively larger $n_1<n_2<\cdots$ and noting that $|X_j\sdiff X'|$ can never increase,
we have a sequence of matrices such that $|X_j\sdiff X'|\to 0$ uniformly in~$X$.
\end{proof}

\begin{lem}\label{thm_compression_T'}
 Let $d,n\in\N$, $M\in SL_d(\Z)$ and let\/ $X_1,\dots,X_n$ and\/ $Y_1,\dots,Y_n$ be
 sets of positive measure in~$\T^d$. For $i\in[n]$, let sets $X_i^M$ and\/ $Y_i^M$ be
 obtained from the sets $X_i$ and\/ $Y_i$, respectively, by applying the Steiner
 symmetrization~$S_M$. Then for points $y_i$ and\/ $y^M_i$ chosen independently and
 uniformly at random from $Y_i$ and\/ $Y^M_i$, respectively, we have 
 \[
  \E(|\medcup_i(X_i+y_i)|) \ge \E(|\medcup_i(X^M_i+y^M_i)|).
 \]
\end{lem}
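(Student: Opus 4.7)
The plan is to first reduce to the case $M=I$ via the measure-preserving change of variables induced by $M$, and then to slice $\T^d$ as $\T\times\T^{d-1}$ and apply \Cref{main_corollary} on each vertical slice, coupling the symmetrized and unsymmetrized random points so that their projections to $\T^{d-1}$ agree.

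For the reduction to $M=I$, I would set $\tilde X_i=M(X_i)$, $\tilde Y_i=M(Y_i)$ and $\tilde y_i=M(y_i)$. Since $M$ is linear, $M(X_i+y_i)=\tilde X_i+\tilde y_i$, and since $M$ is measure-preserving, $\tilde y_i$ is uniform on $\tilde Y_i$. By definition $M(X_i^M)=S_d(\tilde X_i)$ and $M(Y_i^M)=S_d(\tilde Y_i)$, so the target inequality is equivalent to the statement of the lemma for $M=I$ applied to $\tilde X_i$ and $\tilde Y_i$. Hence I may assume $M=I$, and I will write $X_i'=S_d(X_i)$ and $Y_i'=S_d(Y_i)$, replacing $y_i^M$ by $y_i'$ uniform on $Y_i'$.

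For the slicing step, write $y_i=(t_i,u_i)\in\T\times\T^{d-1}$. Then the vertical section $(X_i+y_i)_v=X_{i,v-u_i}+t_i$ for $v\in\T^{d-1}$, so by Fubini
\[
 \big|\medcup_i(X_i+y_i)\big|
 =\int_{\T^{d-1}}\Big|\medcup_i\big(X_{i,v-u_i}+t_i\big)\Big|\,dv.
\]
The uniform law on $Y_i$ disintegrates as: $u_i$ has density $|Y_{i,u_i}|/|Y_i|$ on $\T^{d-1}$, and conditional on $u_i$ the coordinate $t_i$ is uniform on $Y_{i,u_i}$. Since Steiner symmetrization preserves the one-dimensional measure of each section, the marginal law of the vertical coordinate of a uniform point in $Y_i'$ coincides with that of $u_i$. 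I would therefore couple the two random vectors by taking $u_i'=u_i$, with $t_i'\mid u_i$ uniform on the centred interval $Y_{i,u_i}'$ of length $|Y_{i,u_i}|$.

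Finally, for each fixed $(u_1,\dots,u_n)$ and $v$, the inner integrand is exactly the one-dimensional setting of \Cref{main_corollary} with $A_i=X_{i,v-u_i}$ and $B_i=Y_{i,u_i}$, whose centred-interval counterparts are $A_i'=X_{i,v-u_i}'$ and $B_i'=Y_{i,u_i}'$ by the definition of $S_d$. Hence
\[
 \E_{t_1,\dots,t_n}\Big|\medcup_i\big(X_{i,v-u_i}+t_i\big)\Big|
 \ge \E_{t_1',\dots,t_n'}\Big|\medcup_i\big(X_{i,v-u_i}'+t_i'\big)\Big|.
\]
Integrating over $v$ and then over $(u_1,\dots,u_n)$ with the disintegration density above, the right-hand side reassembles, via Fubini and the coupling, into $\E|\medcup_i(X_i'+y_i')|$, which gives the lemma. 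I do not anticipate a real obstacle: the entire content sits in \Cref{main_corollary}, and the remainder is a clean slicing reduction. The only points requiring mild care are the standard Fubini measurability of the sections $X_{i,v-u_i}$ for almost every $(v,u_i)$, and the matching of the two marginal laws of the vertical coordinate noted above.
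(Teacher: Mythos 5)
Your proof is correct and follows essentially the same route as the paper's: reduce to $M=I$ by the measure-preserving change of variables, couple the $\T^{d-1}$-components of the sample points (which have the same marginal law since Steiner symmetrization preserves section measures), and apply \Cref{main_corollary} on each one-dimensional fibre. The only cosmetic difference is that you phrase the slicing via Fubini directly, whereas the paper introduces an auxiliary uniform random point $(z,u)$ and rewrites the expectations as probabilities before conditioning.
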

\begin{proof}
We may assume without loss of generality that $M$ is the identity.
Recall that for $u\in\T^{d-1}$ and any subset $S$ of $\T^d$, $S_u=\{x:(x,u)\in S\}$,
where we identify $\T^d$ with $\T\times\T^{d-1}$. For $i\in [n]$, write $y_i=(z_i,u_i)$ and
$y^M_i=(z^M_i,u^M_i)$ where $z_i,z^M_i\in\T$ and $u_i,u^M_i\in\T^{d-1}$.
Now $u_i$ and $u^M_i$ have the same distribution as $|Y_{i,u}|=|Y^M_{i,u}|$ for all $u\in\T^{d-1}$.
Thus we can couple $y_i$ and $y^M_i$ so that $u^M_i=u_i$ and, conditioned on~$u_i$,
$z_i$ and $z^M_i$ are independent and uniform in $Y_{i,u_i}$ and $Y^M_{i,u_i}$ respectively.
Also choose $(z,u)\in\T\times\T^{d-1}$ uniformly at random and independent from everything else. Then
\begin{align*}
 \E(|\medcup_i(X_i+y_i)|)&=\Prb((z,u)\in\medcup_i(X_i+(z_i,u_i)))\\
 &=\Prb(z\in\medcup_i(X_{i,u-u_i}+z_i))=\E(|\medcup_i(X_{i,u-u_i}+z_i)|)
\end{align*}
and
\begin{align*}
 \E(|\medcup_i(X^M_i+y^M_i)|)&=\Prb((z,u)\in\medcup_i(X^M_i+(z^M_i,u_i)))\\
 &=\Prb(z\in\medcup_i(X^M_{i,u-u_i}+z^M_i))=\E(|\medcup_i(X^M_{i,u-u_i}+z^M_i)|).
\end{align*}
We now condition on $u,u_1\dots,u_n\in\T$ and show that if we choose $z_i$ and $z_i^M$ uniformly
at random from $Y_{i,u_i}$ and $Y^M_{i,u_i}$ respectively for all $i\in [n]$, then
\[
 \E(|\medcup_i(X_{i,u-u_i}+z_i)|) \ge \E(|\medcup_i(X^M_{i,u-u_i}+z^M_i)|).
\]
Write $A_i=X_{i,u-u_i}$ and $B_i=Y_{i,u_i}$ for $i\in[n]$. By the definition of symmetrization
$A'_i=X^M_{i,u-u_i}$ and $B'_i=Y^M_{i,u_i}$ are intervals centred around $0$ with the same size
as $A_i$ and $B_i$ respectively. Hence, by \Cref{main_corollary}, we can conclude
\[
 \E(|\medcup_i (A_i+b_i)|)\ge \E(|\medcup_i (A'_i+b'_i)|),
\]
Where $b_i=z_i$ and $b'_i=z^M_i$ are chosen uniformly
at random from $B_i=Y_{i,u_i}$ and $B'_i=Y^M_{i,u_i}$ respectively.
\end{proof}

\begin{thm}\label{lem_d}
 Let\/ $d,n\in \N$ and let\/ $A_1,\dots,A_n$ and\/ $B_1,\dots,B_n$ be sets of positive
 measure in~$\T^d$. For $i\in[n]$, let $A'_i$ and\/ $B'_i$ be bands in $\T^d$ centred at the
 origin with the same volumes as $A_i$ and\/ $B_i$ respectively, i.e., sets of the form
 $I \times \T^{d-1}$, where $I$ is a closed interval in $\T$ centred at the origin.
 Then for $b_1,\dots,b_n$, $b'_1,\dots,b'_n$ chosen independently and uniformly at random from
 $B_1,\dots,B_n$ and\/ $B'_1,\dots,B'_n$, respectively, we have 
 \[
  \E(|\medcup_i(A_i+b_i)|) \ge \E(|\medcup_i(A'_i+b'_i)|).
 \]
\end{thm}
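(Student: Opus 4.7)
The plan is to apply a single deterministic sequence of Steiner symmetrizations simultaneously to all $2n$ sets $A_1,\dots,A_n,B_1,\dots,B_n$, using \Cref{thm_compression_T'} to ensure the expected volume cannot increase at any step, and then use \Cref{compress_limit_d} plus \Cref{lem_close} to identify the limit with the expected volume for the bands.

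Concretely, let $M_j\in SL_d(\Z)$, $j\in\N$, be the sequence provided by \Cref{compress_limit_d}. Define recursively $A_i^{(0)}=A_i$, $B_i^{(0)}=B_i$, and
\[
 A_i^{(j)}=(A_i^{(j-1)})^{M_j}, \qquad B_i^{(j)}=(B_i^{(j-1)})^{M_j}.
\]
Apply \Cref{thm_compression_T'} with $X_i=A_i^{(j-1)}$ and $Y_i=B_i^{(j-1)}$ at each step and chain the resulting inequalities to obtain, for every $J\in\N$,
\[
 \E\big(|\medcup_i(A_i+b_i)|\big)\ge \E\big(|\medcup_i(A_i^{(J)}+b_i^{(J)})|\big),
\]
where $b_i^{(J)}$ is drawn uniformly from $B_i^{(J)}$.

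Next, fix $\eps>0$. By the uniform version of \Cref{compress_limit_d} (applied to each of the $2n$ sets separately, using $A_i'$ and $B_i'$ as the target bands of the correct volumes), there exists $J$ such that $|A_i^{(J)}\sdiff A_i'|$ and $|B_i^{(J)}\sdiff B_i'|$ are all at most $\eps\cdot\min_i(|A_i|,|B_i|)$. \Cref{lem_close}, applied with $X_i=A_i^{(J)}$, $X'_i=A'_i$, $Y_i=B_i^{(J)}$, $Y'_i=B'_i$, then gives
\[
 \big|\E(|\medcup_i(A_i^{(J)}+b_i^{(J)})|)-\E(|\medcup_i(A'_i+b'_i)|)\big|\le C\eps,
\]
where $C$ depends only on $n$ and the volumes $|A_i|$. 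Combining the two displayed inequalities and letting $\eps\to0$ yields the desired conclusion
\[
 \E(|\medcup_i(A_i+b_i)|)\ge \E(|\medcup_i(A'_i+b'_i)|).
\]

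The only genuinely delicate point is that the \emph{same} sequence $(M_j)$ must succeed in symmetrizing every one of the $2n$ sets, not just one; this is exactly the content of the word \emph{uniformly} in \Cref{compress_limit_d}, so there is no real obstacle here. Everything else is the routine combination of monotonicity under a single symmetrization step (\Cref{thm_compression_T'}) with continuity of the expected measure in the set data (\Cref{lem_close}).
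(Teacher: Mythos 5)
Your proof is correct and is essentially identical to the paper's own proof: both iterate the single sequence $(M_j)$ from \Cref{compress_limit_d} on all $2n$ sets simultaneously, use \Cref{thm_compression_T'} for step-by-step monotonicity, and finish with \Cref{lem_close} to transfer the uniform convergence of the symmetric differences to convergence of the expectations. The explicit $\eps$-bookkeeping you add is a harmless unpacking of the limit statement the paper invokes directly.
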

\begin{proof}
For $j \in \N$ let $M_j\in SL_d(\Z)$ be the matrices given by \Cref{compress_limit_d}.
For $i \in [n]$ set $A_{i,0}=A_i$ and $B_{i,0}=B_i$ and for $j \in \N$ recursively define
the sequences $A_{i,j+1}=A_{i,j}^{M_j}$ and $B_{i,j+1}=B_{i,j}^{M_j}$. For $i \in [n]$ and
$j \in \N$ choose points $b_{i,j}$ independently and uniformly at random from $B_{i,j}$.
Then by \Cref{thm_compression_T'} we have
\[
 \E(|\medcup_i(A_{i,j}+b_{i,j})|) \ge \E(|\medcup_i(A_{i,j+1}+b_{i,j+1})|)
\]
for all $j\in\N$. But by \Cref{compress_limit_d} we have
\[
 \lim_{j\to\infty} |A_{i,j} \sdiff A'_i|=\lim_{j \to \infty} |B_{i,j} \sdiff B'_i|=0
\]
which, by \Cref{lem_close}, implies
\[
 \lim_{j\to\infty}\E(|\medcup_i(A_{i,j}+b_{i,j})|) = \E(|\medcup_i(A'_i+b'_i)|).
\]
Thus we conclude that
\[
 \E(|\medcup_i(A_i+b_i)|)=\E(|\medcup_i(A_{i,0}+b_{i,0})|) \ge \E(|\medcup_i(A'_i+b'_i)|).
 \qedhere
\]
\end{proof}

Note that \Cref{main_torus} now follows immediately from \Cref{lem_d}, by taking all $A_i=A$ and all $B_i=B$.

We mention in passing another approach to proving \Cref{main_torus}. In the above, we have started with the
result in the discrete setting of $\Z_p$, namely \Cref{main_tool}. We then passed to the circle
(\Cref{main_corollary}) and went from there to the result in the torus. Alternatively, one could
use a result of Tao (Theorem~1.1 in~\cite{Tao}), which is the analogue in compact connected abelian
groups of Theorem~2 in \cite{pollard} (the result we used in Section~\ref{tool}), to get to \Cref{main_torus} by
using it instead of the Pollard-type inequalities from \cite{pollard} in (a generalisation of) \Cref{main_tool}.

\section{Derivation of \Cref{main_sphere}}

For any subset $X$ of $\R^d$ and $u\in \R^{d-1}$, we define the section $X_u=\{x:(x,u)\in X\}$, where we identify
$\R^d$ with $\R\times\R^{d-1}$. We define the \emph{Steiner symmetrization} $S_d(X)$ of a measurable subset $X$ of
$\R^d$ by setting $S_d(X)_u$ to be a closed interval in $\R$ centred around the origin such that
$|S_d(X)_u|=|X_u|$.

More generally, given a matrix $M\in SL_d(\R)$, the Steiner symmetrization $S_M$ transforms a measurable
set $X$ in $\R^d$ into the set $X^M=S_M(X)$ in $\R^d$ where $S_M(X)=M^{-1}(S_d(M(X)))$.

\begin{lem}\label{thm_compression_R}
 Let $d,n\in \N$, $M\in SL_d(\R)$ and let $X$ and $Y$ be sets of positive finite measure in~$\R^d$.
 Let sets $X^M$ and $Y^M$ be obtained from sets $X$ and~$Y$, respectively, by applying the Steiner
 symmetrization~$S_M$. Then for points $y_1,\dots,y_n$, $y^M_1,\dots,y^M_n$ chosen independently
 and uniformly at random from $Y$, $Y^M$, respectively, we have 
 \[
  \E(|\medcup_i(X+y_i)|) \ge \E(|\medcup_i(X^M+y^M_i)|).
 \]
\end{lem}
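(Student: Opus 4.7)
The plan is to mirror the proof of Lemma \ref{thm_compression_T'} almost verbatim. Since $M \in SL_d(\R)$ preserves Lebesgue measure, changing variables by $\tilde X = M(X)$, $\tilde Y = M(Y)$ reduces the statement to the case $M = I$, where $X^M = S_d(X)$ is the standard vertical Steiner symmetrization.

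Now decompose each random point as $y_i = (z_i, u_i) \in \R \times \R^{d-1}$ and couple the symmetrized points so that $u_i^M = u_i$ (valid because $|Y^M_u| = |Y_u|$ for every $u$). For any $w = (x, v) \in \R^d$ one has $w \in X + (z_i, u_i)$ iff $x \in X_{v - u_i} + z_i$, so Fubini gives
\[
 \E(|\medcup_i (X + y_i)|) = \int_{\R^{d-1}} \E_{u_i, z_i}|\medcup_i (X_{v - u_i} + z_i)| \, dv,
\]
and analogously for $X^M$. Hence it suffices to prove, for almost every $v$ and almost every $u_1, \dots, u_n$, the one-dimensional inequality
\[
 \E_{z_i \in Y_{u_i}}|\medcup_i(X_{v - u_i} + z_i)| \ge \E_{z_i^M \in Y^M_{u_i}}|\medcup_i(X^M_{v - u_i} + z_i^M)|,
\]
in which $X^M_{v-u_i}$ and $Y^M_{u_i}$ are centred closed intervals with the same measure as $X_{v-u_i}$ and $Y_{u_i}$ respectively.

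This is an $\R$-analogue of Corollary \ref{main_corollary} that allows different sets $A_i, B_i$ at each index. I would derive it by embedding into a torus. By inner regularity, all sets involved may be assumed to lie in some large box $[-L/4, L/4]$; scaling by $L^{-1}$ places everything inside $(-1/4, 1/4] \subset \T = \R/\Z$. For any $b_i \in B_i = Y_{u_i}$ the sumset $A_i + b_i$ lies in $(-1/2, 1/2]$, so no wrap-around occurs and the unions $\medcup_i(A_i+b_i)$ computed in $\R$ agree with those in $\T$; centred intervals in $\R$ correspond to centred intervals in $\T$ for the same reason. Corollary \ref{main_corollary} then yields the inequality on $\T$, and rescaling back gives it on $\R$.

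The main obstacle is the bookkeeping in this torus embedding, especially verifying that no wrap-around occurs after rescaling and that the Steiner-symmetrized sets in $\R$ really coincide with those produced on $\T$. The measure-zero set of $u$ with $|Y_u| = 0$ (where the conditional distribution of $z_i$ is undefined) is handled harmlessly by the paper's convention on sections. Everything else follows the outline of Lemma \ref{thm_compression_T'}.
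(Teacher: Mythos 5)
Your proof is correct, but it takes a different (and somewhat longer) route than the paper. The paper proves the lemma in essentially three lines: after WLOG $M=I$ and approximating by bounded sets via Lemma~\ref{lem_close}, it rescales so that $X,Y,X^M,Y^M$ all lie in $[-\tfrac14,\tfrac14)^d$, observes that $X+Y, X^M+Y^M\subseteq[-\tfrac12,\tfrac12)^d$, and then pushes everything through the canonical map $\pi\colon[-\tfrac12,\tfrac12)^d\to\T^d$ and invokes Lemma~\ref{thm_compression_T'} directly on $\pi(X),\pi(Y)$. You instead re-derive the internal structure of Lemma~\ref{thm_compression_T'} (the coupling $u_i^M=u_i$, the Fubini reduction to sections) in $\R^d$ from scratch, and only at the one-dimensional level do you embed into the torus and cite Corollary~\ref{main_corollary}. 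Both arguments work and use the same underlying ideas; the trade-off is that the paper's proof reuses the already-proved $\T^d$ lemma wholesale (shorter, no bookkeeping beyond the $d$-dimensional rescaling), while yours carries the sections argument along explicitly, at the cost of having to redo the coupling and handle boundedness of the one-dimensional slices. One small point of hygiene: it is cleaner to first reduce (via Lemma~\ref{lem_close}) to bounded $X$ and $Y$ at the top level, so that every section is automatically bounded, rather than invoking inner regularity separately inside each one-dimensional slice; and the set of $u$ with $|Y_u|=0$ need not be Lebesgue-null, but as you note, it is null for the law of $u_i$, which is what matters.
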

\begin{proof}
We may assume without loss of generality that $M$ is the identity. In addition, by scaling $X$ and $Y$
by the same sufficiently small parameter $\alpha>0$, we may assume that $X$ and $Y$, as well as $X^M$
and~$Y^M$, are measurable subsets of $[-\frac{1}{4},\frac{1}{4})^d \subseteq \R^d$. (By \cref{lem_close} we may approximate $X$ and $Y$ by
bounded sets.)
In particular, $X+Y, X^M+Y^M \subseteq [-\frac{1}{2},\frac{1}{2})^d$.
As the canonical map $\pi\colon [-\frac{1}{2},\frac{1}{2})^d \to \T^d$ is a linear isometry,
\Cref{thm_compression_R} for sets $X$ and $Y$ reduces to \Cref{thm_compression_T'} for sets $\pi(X)$
and $\pi(Y)$.
\end{proof}

\begin{proof}[Proof of \Cref{main_sphere}]
By \cite{volcic}, there exist a sequence of Steiner symmetrizations which we can apply inductively
to the measurable sets $A=A^0$ and $B=B^0$ in order to produce measurable sets $A^k$ and $B^k$ for
$k \in \N$, with the property that $|A^k \sdiff A'|, |B^k \sdiff B'| \to 0$ as $k \to \infty$.
Choose points $b^k_1,\dots, b^k_n$ uniformly at random from $B^k$.

By \Cref{thm_compression_R}, for $k\in \N$ we have
\[
 \E(|\medcup_i(A^k+b^k_i)|) \ge \E(|\medcup_i(A^{k+1}+b^{k+1}_i)|).
\]
We also have by \Cref{lem_close}
\[
 \lim_{k\to\infty}\E(|\medcup_i(A^k+b^k_i)|) = \E(|\medcup_i(A'+b'_i)|).
\]
Combining the last two equations, we conclude that
\[
 \E(|\medcup_i(A+b_i)|) = \E(|\medcup_i(A^0+b^0_i)|) \ge \E(|\medcup_i(A'+b'_i)|).
 \qedhere
\]
\end{proof}

Actually, the proof above can be easily adapted to give the following generalization.
\begin{thm}
 Let $d,n \in \N$ and let $A_1,\dots,A_n$, $B_1,\dots,B_n$ be measurable sets in~$\R^d$
 of finite positive measure. For $i\in[n]$ let $A'_i$ and\/ $B'_i$ be balls in $\R^d$
 centred at the origin with the same volumes as $A_i$ and\/ $B_i$, respectively.
 Then for $b_i$, $b'_i$ chosen independently and uniformly at random from $B_i$, $B'_i$
 respectively, we have 
 \[
  \E(|\medcup_i(A_i+b_i)|) \ge \E(|\medcup_i(A'_i+b'_i)|).
 \]
\end{thm}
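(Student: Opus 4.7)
The plan is to run the proof of \Cref{main_sphere} verbatim, with two mild generalizations.

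First, I would state and prove a multi-set analog of \Cref{thm_compression_R}: for $M\in SL_d(\R)$ and sets $X_1,\dots,X_n,Y_1,\dots,Y_n$ of finite positive measure in~$\R^d$, applying $S_M$ to all of them gives
\[
  \E(|\medcup_i(X_i+y_i)|)\ge \E(|\medcup_i(X_i^M+y_i^M)|),
\]
with $y_i$, $y_i^M$ chosen independently and uniformly from $Y_i$, $Y_i^M$ respectively. The proof is word-for-word that of \Cref{thm_compression_R}: reduce to $M$ the identity, use \Cref{lem_close} to approximate by bounded sets, scale uniformly so that all $X_i,Y_i,X_i^M,Y_i^M$ lie inside $[-\tfrac14,\tfrac14)^d$, and transport through the canonical linear isometry $\pi\colon[-\tfrac12,\tfrac12)^d\to\T^d$ to invoke \Cref{thm_compression_T'}, which is already stated in the multi-set form needed here.

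Second, I need a single sequence of Steiner symmetrizations that simultaneously brings each $A_i$ close to $A_i'$ and each $B_i$ close to $B_i'$. I would appeal to \cite{volcic} one set at a time, concatenating: run the Vol\v{c}i\v{c} sequence applied to $A_1$ until $|A_1^{k_1}\sdiff A_1'|<\eps$, then append a sequence for $B_1$, then $A_2$, and so on through all $2n$ sets. The key observation making this work is that every ball centered at the origin is invariant under every axial Steiner symmetrization, so by the Euclidean analog of \Cref{second'} (which reads $|X^M\sdiff A_i'|\le|X\sdiff A_i'|$), once some $|A_i^k\sdiff A_i'|$ or $|B_i^k\sdiff B_i'|$ has been made small, later symmetrizations in the concatenated sequence cannot undo that progress. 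A diagonal argument in $\eps$ then produces a single infinite sequence $\sigma_1,\sigma_2,\dots$ with $|A_i^k\sdiff A_i'|\to 0$ and $|B_i^k\sdiff B_i'|\to 0$ for every $i$.

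Given these two ingredients, the theorem follows immediately. Iterating the multi-set compression lemma along the chosen sequence gives $\E(|\medcup_i(A_i+b_i)|)\ge\E(|\medcup_i(A_i^k+b_i^k)|)$ for every $k$, and \Cref{lem_close} shows the right-hand side tends to $\E(|\medcup_i(A_i'+b_i')|)$ as $k\to\infty$. The one genuinely new point, and the only real obstacle, is the simultaneous convergence of all $2n$ sets under a common sequence of symmetrizations; as explained above, the invariance of origin-centered balls reduces this to a routine concatenation plus a diagonal extraction.
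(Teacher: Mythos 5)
Your argument is correct and matches what the paper intends when it says the proof of \Cref{main_sphere} ``can be easily adapted'': a multi-set \Cref{thm_compression_R} obtained by the same reduction to \Cref{thm_compression_T'}, plus a single symmetrization sequence driving all $2n$ sets to centred balls, finished off by \Cref{lem_close}. The only place you take a slightly longer road is in producing the common symmetrization sequence: your concatenation-plus-diagonal construction (valid, and correctly justified by the Euclidean contraction property and the invariance of centred balls) is an explicit alternative to simply observing that Vol\v{c}i\v{c}'s theorem already gives almost sure convergence of a random direction sequence for each fixed set, so a countable intersection of full-measure events yields a single deterministic sequence working for all $2n$ sets at once---which is how the paper already invokes \cite{volcic} for the two sets $A$ and $B$.
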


\section{Note on the size of $A$ + $n$ random points of $B$}\label{calculations}

In this short section we give the estimates needed to
prove \Cref{estimate_intervals} and \Cref{estimate_sphere}.

\begin{proof}[Proof of \Cref{estimate_intervals}]
By \Cref{main_torus}, we may assume that $A$ and $B$ are intervals with $A=[0,a]$ and $B=[0,b]$, say.
Order the points $\{b_1,\dots,b_n\}$ so that $0<b_1<b_2<\dots<b_n<b$. Then it is easy to see that,
for $i>1$, $b_i-b_{i-1}$ has the same distribution as $b_1$. Indeed, if we condition on $b_i$ the set
$\{b_1,\dots,b_{i-1}\}$ is just a uniformly chosen random set of $i-1$ points in $[0,b_i]$ and so by
symmetry $b_i-b_{i-1}$ has the same distribution as $b_1-0=b_1$. As this holds conditioned on $b_i$
for any choice of $b_i$, it also holds unconditionally. We also note that the distribution of $b_1$
is just the minimum of $n$ i.i.d.{} random variables that are uniform on the interval $[0,b]$.

Clearly $A+b_1=[b_1,a+b_1]$ has length~$a$. Now each $A+b_i$, $i>1$ adds an additional length
of $\min\{a,b_i-b_{i-1}\}$. Indeed, it adds the interval $(a+b_{i-1},a+b_i]$ except when
$b_i-b_{i-1}>a$, in which case it adds the interval $[b_i,a+b_i]$. Hence, provided $a+b\le 1$
so that there is no wrap around the torus, linearity of expectation and the above result on the
distribution of $b_i-b_{i-1}$ implies
\[
 \E(|\medcup_i (A+b_i)|)
 =a+(n-1)\E(\min\{a,b_1\})
 =a+(n-1)b\cdot \Prb(b_*\le\min\{a,b_1\}),
\]
where $b_*$ is chosen independently and uniformly at random from $[0,b]$.
But this last probability is just
the probability that on picking $n+1$ points $\{b_*,b_1,\dots,b_n\}$ at random in $[0,b]$  
that $b_*$ is the smallest, and that the smallest is at most~$a$. By symmetry this is $1/(n+1)$ times the
probability that the smallest of $n+1$ uniform numbers in $[0,b]$ is at most $a$, which is 1 if
$a\ge b$ and $1-(1-a/b)^{n+1}$ if $a<b$. The result follows for $a+b\le1$.

For $a+b>1$ we need to subtract the overlap $X:=|A+(b_n-1)\cap A+b_1|$ that can occur as a result of wrap around.
We note that the result holds for $n=1$, so assume $n\ge 2$.
By reflecting the interval $[b_1,b]$ we see that $b-b_n$ has the same distribution as $b_2-b_1$,
even conditioned on~$b_1$. Thus $X$ has the same distribution as $\max\{a+b-1-b_2,0\}$. 
It is easy to see that the pdf of $b_2$ is $n(n-1)x(b-x)^{n-2}b^{-n}$ and thus $\E X$ can be
calculated (after some algebra) as
\[
 \int_{0}^{a+b-1}\mkern-20mu n(n-1)x(b-x)^{n-2}b^{-n}(a+b-1-x)\,dx
 =a+\tfrac{n-1}{n+1}b-1+\big(\tfrac{1-a}{b}\big)^n\big(b-\tfrac{n-1}{n+1}(1-a)\big).
\]
The result then follows after some algebraic simplifications.
\end{proof}

\begin{proof}[Proof of \Cref{estimate_sphere}]
By \Cref{main_sphere}, we may assume $A$ and $B$ are balls centred at the origin.
If $b_i\in B$ then a point $x\in A+B$ is covered by $A+b_i$ if and only if $b_i\in (x-A)\cap B$.
Hence for a uniformly random choice of $b_i$,
\[
 \Prb(x\in A+b_i)=\tfrac{|(x-A)\cap B|}{|B|}.
\]
The expected volume $A+B$ \emph{not} covered by $\medcup_i(A+b_i)$ is then given simply as
\[
 \int_{A+B} \prod_{i=1}^n(1-\Prb(x\in A+b_i))\,dx=\int_{A+B}\big(1-\tfrac{|(x-A)\cap B|}{|B|}\big)^n\,dx.
\]
As we are considering $n$ very large, the integral will dominated by the region very close to the boundary
of the ball $A+B$ where $(x-A)\cap B$ is very small. Let the radii of $A$ and $B$ be $a$ and $b$ respectively,
and assume $x$ is at distance $a+b-\eps$ from the centre of $A+B$. Then
\begin{equation}\label{e:px}
 \frac{|(x-A)\cap B|}{|B|}=\frac{a^d\int_0^\alpha \sin^d\theta\,d\theta
 +b^d\int_0^\beta \sin^d\theta\,d\theta}{b^d\int_0^\pi \sin^d\theta\,d\theta},
\end{equation}
where the triangle with side lengths $a$, $b$ and $a+b-\eps$ forms angles $\alpha$ and $\beta$ between
the side of length $a+b-\eps$ and the sides of length $a$ and $b$ respectively. To see this note that
$(x-A)\cap B$ is formed from two caps, a cap of $x-A$ subtending an angle $\alpha$ at its centre and a
cap of $B$ subtending an angle $\beta$ at its centre. The integrals that give the volumes of these caps
is of the form $\int_{a\cos\alpha}^a v_{d-1}(\sqrt{a^2-t^2})^{d-1}\,dt$ for $x-A$, and similarly for $B$ with
$v_{d-1}$ equal to the volume of a unit $(d-1)$-sphere. A change of variables $t=a\cos\theta$
puts this in the form $v_{d-1}a^d\int_0^\alpha \sin^d\theta\,d\theta$, and similarly for the cap of~$B$.

Now fix $a$, $b$ and $d$ and consider the case when $\eps\to 0$. We have $a\sin\alpha=b\sin\beta$
and $a\cos\alpha+b\cos\beta=a+b-\eps$. As $\eps\to0$ this gives $\alpha^2=(1+o(1))\frac{2\eps b/a}{a+b}$,
$\beta^2=(1+o(1))\frac{2\eps a/b}{a+b}$ and, for $0\le \theta\le\max\{\alpha,\beta\}$,
$\sin^d\theta = (1+o(1))\theta^d$. Thus
\[
 \frac{|(x-A)\cap B|}{|B|}
 =\frac{(1+o(1))(a^{-1}+b^{-1})(2\eps ab)^{(d+1)/2}}{(d+1)(a+b)^{(d+1)/2} b^d(v_d/v_{d-1})}
 =(1+o(1))C_{a,b,d}\eps^{(d+1)/2},
\]
where $C_{a,b,d}$ is a positive constant that depends on $a$, $b$ and $d$, and the $o(1)$ is as $\eps\to0$.
The expected fraction of $A+B$ not covered by $\medcup_i(A+b_i)$ is then
\begin{align*}
 \frac{1}{|A+B|}\int_{A+B}\big(1-\tfrac{|(x-A)\cap B|}{|B|}\big)^n\,dx
 &=(1+o(1))\tfrac{d}{a+b}\int_0^\infty \exp\big(-C_{a,b,d}n\eps^{(d+1)/2}\big)\,d\eps\\
 &=(1+o(1))\tfrac{d}{a+b}\Gamma(\tfrac{d+3}{d+1})(C_{a,b,d}n)^{-2/(d+1)}.
\end{align*}
Indeed, both integrals are dominated by the range when $\eps$ is very small, in which case we can approximate
the $(1-\cdots)^n$ term by an exponential and the volume of a spherical shell of thickness $d\eps$ by $\frac{d}{a+b}d\eps$
times the volume of the sphere $A+B$. Evaluating the final integral gives the expected fraction of $A+B$ uncovered
as $(c+o(1))n^{-2/(d+1)}$ for some constant $c>0$ depending on $a$, $b$ and~$d$. Clearly $c$ is unchanged by
uniform scaling of $A$ and $B$ and hence $c$ depends only on the dimension and the ratio of
the volumes of $A$ and~$B$.
\end{proof}

We note that for the case when there is a bounded ratio between the radii $a$ and $b$, one has
$c=(\frac{b}{2a}+o(1))d=\Theta(d)$ as $d\to\infty$. Thus we need
$n$ to be at least of order $d^{(d+1)/2}$ for the asymptotic formula in \Cref{estimate_sphere}
to be a reasonable guide to the size of $\E|A+\{b_1,\dots,b_n\}|$.

\section{Sharp Stability Result  for \Cref{main_torus}}\label{stability}

Our aim in this section is to prove a stability result for \Cref{main_torus}. 
Recall that a \emph{Bohr set} $X$ in $\T^d$ is a set of the form
$X=\{x \in \T^d \colon |\phi(x)-c| \le \rho \}$, where $\phi \colon \T^d \to \T$
is a continuous homomorphism, $c \in \T$ and $\rho \in [0,\frac{1}{2}]$.
Two Bohr sets in $\T^d$ are said to be parallel if they use the same
homomorphism~$\phi$. The result we will prove, which is sharp, is as follows.

\begin{thm}\label{stab}
 For $\eps>0$ and\/ $d,n\in\N$, there exists $K=K(\eps,n)>0$ such that the following holds.
 Let\/ $A$ and\/ $B$ be measurable subsets of\/ $\T^d$ with $\eps\le |A|,|B|\le 1-\eps$.
 Let\/ $A'$ and\/ $B'$ be intervals in $\T$ centred around\/ $0$ with $|A'|=|A|$ and\/ $|B'|=|B|$.
 Assume that if we choose $b_1,\dots,b_n$ and\/
 $b_1',\dots,b_n'$ independently and uniformly at random from $B$ and~$B'$, respectively, then 
 \[
  \E(|A+\{b_1,\dots,b_n\}|)\le \delta+\E(|A'+\{b'_1,\dots,b'_n\}|).
 \]
 Then, there exist parallel Bohr sets $X$ and\/ $Y$ in $\T^d$ such that
 \[
  |A\sdiff X|,|B\sdiff Y|\le K\delta^{1/2}.
 \]
\end{thm}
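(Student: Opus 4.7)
The plan is to reformulate the hypothesis as a quantitative near-equality in Macbeath's sumset inequality on~$\T^d$, and then invoke a sharp stability version of that inequality.

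First, I would rewrite the expectations in convolution form. Setting $f=\id_A*\id_B$ on~$\T^d$, the identity $\Prb(x\notin A+\{b_1,\dots,b_n\})=(1-f(x)/|B|)^n$ gives $\E(|A+\{b_i\}|)=\int_{\T^d} G(f)\,dx$ with $G(s)=1-(1-s/|B|)^n$ strictly concave on $[0,|B|]$, and similarly $\E(|A'+\{b'_i\}|)=\int_{\T^d} G(f')\,dx$ with $f'=\id_{A'}*\id_{B'}$. The hypothesis reads
\[
 \int_{\T^d} G(f)\,dx-\int_{\T^d} G(f')\,dx\le\delta,
\]
and by the layer-cake formula this equals $\int_0^{|B|} G'(s)\big[|\{f>s\}|-|\{f'>s\}|\big]\,ds$. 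The pointwise level-set inequality $|\{f>s\}|\ge|\{f'>s\}|$ is a direct consequence of Tao's analogue of Pollard's theorem for compact connected abelian groups (cited by the authors at the end of Section~\ref{torus}). Since $G'(s)$ is positive on $[0,|B|)$ and is bounded below on $[0,(1-\eps)|B|]$ by a constant $c(n,\eps)>0$, this gives
\[
 \int_0^{(1-\eps)|B|}\big[|\{f>s\}|-|\{f'>s\}|\big]\,ds\le C_1(n,\eps)\,\delta.
\]

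Next, I would convert this integral defect into the pointwise bound $|A+B|\le|A'+B'|+C_2(n,\eps)\,\delta$. The convolution $f$ is continuous on $\T^d$, satisfies $f\le|B|$, and obeys $\int f=|A||B|\ge\eps^2$, which provides a priori regularity of $f$ near the boundary of its support: the measure of the thin level set $\{0<f\le s\}$ can be bounded by a controlled function of~$s$. This regularity lets one promote the integrated level-set defect on $[0,(1-\eps)|B|]$ into a defect at the level $s=0^+$, namely $|A+B|-|A'+B'|$, with only a constant loss. Since $|A'+B'|=\min(|A|+|B|,1)$, one obtains a quantitative near-equality in Macbeath's inequality $|A+B|\ge\min(|A|+|B|,1)$ on~$\T^d$ with defect linear in~$\delta$.

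Finally, I would invoke a sharp stability theorem for Macbeath's inequality on $\T^d$---one of the deeper recent tools alluded to in the introduction---of the form: if $|A+B|\le\min(|A|+|B|,1)+\eta$ and $|A|,|B|\in[\eps,1-\eps]$, then there exist parallel Bohr sets $X,Y\subseteq\T^d$ with $|A\sdiff X|,|B\sdiff Y|\le K_0\sqrt{\eta}$. Applying this with $\eta=C_2\delta$ yields the desired bound $|A\sdiff X|,|B\sdiff Y|\le K\sqrt\delta$. The main obstacle is the middle step: one must keep the relationship between the integral level-set defect and the sumset defect linear in~$\delta$ (not of order $\sqrt\delta$), so that the unavoidable square-root loss is paid only once at the final Macbeath-stability step and matches the sharp exponent $1/2$ asserted in the theorem.
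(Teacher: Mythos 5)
Your reformulation in the first paragraph is correct and matches the paper's, but the argument breaks down at several later points, and the overall strategy of reducing to a stability theorem for the \emph{two}-set Macbeath inequality is not the route the paper takes and in fact cannot work in the stated generality.

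\textbf{The pointwise level-set claim is false.} You assert that Tao's analogue of Pollard's theorem gives $|\{f>s\}|\ge|\{f'>s\}|$ for every $s$. It does not: Pollard/Tao gives only the \emph{integrated} inequality $\int_0^t|\{f>s\}|\,ds\ge\int_0^t|\{f'>s\}|\,ds$ for every $t$ (equivalently $\int\min(f,t)\ge\int\min(f',t)$). The pointwise version genuinely fails. For instance in $\Z_7$ take $A=\{0,2,4\}$, $B=\{0,1\}$: then $\id_A*\id_B\le 1$ everywhere, whereas with $A'$ an interval of length $3$ one has $\id_{A'}*\id_{B'}\ge 2$ on two points. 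Thus $|\{f\ge 2\}|=0<2=|\{f'\ge 2\}|$. Because the integrand $G'(s)\,[\,|\{f>s\}|-|\{f'>s\}|\,]$ is not of one sign, you cannot truncate the layer-cake integral to $[0,(1-\eps)|B|]$ and then divide by the lower bound on $G'$, so the displayed bound $\int_0^{(1-\eps)|B|}[\,|\{f>s\}|-|\{f'>s\}|\,]\,ds\le C_1\delta$ is unsupported. (One can instead integrate by parts against $G''\le 0$ and use the Pollard quantity $D(s)=\int_0^s[\,|\{f>u\}|-|\{f'>u\}|\,]\,du\ge0$; this is closer in spirit to what the paper's Theorem~\ref{cont_arbitrary_functions} does, but it yields control of $\int D$ or of $D$ at a single point, not of $D'(0^+)=|A+B|-|A'+B'|$.)

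\textbf{The regularity step does not hold, and the Macbeath stability endpoint is unavailable.} Passing from a bound on the integral $\int_0^{(1-\eps)|B|}D(s)\,ds$ (or on $D$ at one value of $s$) to a bound on $|A+B|-|A'+B'|$ requires control of the small level set $\{0<f\le s\}$, but no such control follows from continuity of $f$ and $\int f\ge\eps^2$: a set $A$ of measure $\ge\eps$ consisting of many well-spread thin components produces a convolution that is small on a set of measure close to $1$. More fundamentally, the reduction to the two-set Macbeath inequality cannot work when $|A|+|B|\ge 1$, which the hypotheses $\eps\le|A|,|B|\le 1-\eps$ allow. In that regime $|A'+B'|=\min(|A|+|B|,1)=1$, so $|A+B|\le|A'+B'|+\eta$ holds trivially for \emph{every} pair $A,B$, and no stability statement is possible. (The paper's own example $A=[0,\tfrac12]$, $B=[0,\tfrac12-\alpha]\cup[\tfrac12,\tfrac12+\alpha]$ has $|A+B|=|A|+|B|=1$, i.e., zero Macbeath defect, yet $|B\sdiff X|\ge\alpha$ for every Bohr set $X$.) Finally, the ``sharp stability theorem for Macbeath's inequality on $\T^d$'' with a $\sqrt\eta$ conclusion that you invoke at the end is not in the literature and the paper does not use it.

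\textbf{What the paper actually does.} The paper also rewrites the hypothesis as $\int_{\T^d}(\id_D*\id_B)^n\ge -\delta(1-\eps)^n+\int_{\T}(\id_{D'}*\id_{B'})^n$ after taking complements $D=A^c$, but then proves (Theorem~\ref{Main_ingredient}, via the rearrangement inequality of Theorem~\ref{cont_arbitrary_functions}) that this forces \emph{near-equality in the three-set Riesz--Sobolev inequality} $\int_C\id_D*\id_B\le\int_{C'}\id_{D'}*\id_{B'}$ for a suitably sized test set $C$ in a non-degenerate regime. It then applies the Christ--Iliopoulou stability theorem for the three-set Riesz--Sobolev inequality (Theorem~\ref{Stab_Riesz-Sobolev}), which has the sharp $\sqrt\delta$ dependence and, crucially, remains non-degenerate across the full range of $|A|,|B|$ because $C$ can be chosen to satisfy the required non-degeneracy constraints. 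Replacing the two-set Macbeath endpoint by the three-set Riesz--Sobolev endpoint is exactly what avoids the obstruction when $|A|+|B|\ge1$, and the rearrangement step (applied to $f^n$ versus $g^n$) is what makes the reduction linear in $\delta$ so that the square-root loss is incurred only once, at the Christ--Iliopoulou step. Your proposal identifies the correct starting reformulation and the correct target shape of the bound, but the middle reduction and the endpoint theorem you lean on are both missing.
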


We remark that the exponent $1/2$ in \Cref{stab} is optimal as the following example shows.
Fix $\alpha>0$ small and let $A=[0,\frac{1}{2}]$ and
$B=[0,\frac{1}{2}-\alpha] \cup [\frac{1}{2},\frac{1}{2}+\alpha]$,
considered as a subset of~$\T^d$ with $d=1$.
Then $A'=B'=[-\frac{1}{4},\frac{1}{4}]$. It is a simple computational
task to check that
\[
 \E(|A+\{b_1,b_2\}|)= \Theta(\alpha^2)+\E(|A'+\{b_1',b_2'\}|)
\]
and for any Bohr set $X$ in $\T$
\[
 |B\sdiff X|\ge \alpha.
\]
A corresponding statement also holds with
$A=[0,\frac{1}{2}-\alpha] \cup [\frac{1}{2},\frac{1}{2}+\alpha]$ and $B=[0,\frac{1}{2}]$.

We will use some inequalities of Riesz--Sobolev type, and stability versions of these
inequalities, which are due to Christ and Iliopoulou~\cite{Christ}. We state them below just
in the form we need them, namely for $\T^d$, but we mention that, as proved in~\cite{Christ},
they also have more general formulations, being true in any compact connected abelian group.
Here are these two results.

\begin{thm}[\cite{Christ}]\label{Riesz-Sobolev}
 Let\/ $A$, $B$ and\/ $C$ be compact subsets of $\T^d$. Let $A'$, $B'$ and $C'$ be intervals
 in $\T$ centred around\/ $0$ with the same measures as $A$, $B$ and\/~$C$, respectively. Then
 \[
  \int_{C}\id_{A}*\id_{B}(x)\,dx \le \int_{C'}\id_{A'}*\id_{B'}(x)\,dx.
 \]
\end{thm}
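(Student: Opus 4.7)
The plan is to reduce the $d$-dimensional statement to the one-dimensional case on $\T$, and then prove the one-dimensional case by discretizing and invoking Pollard's inequality, exactly in the spirit of the passage from \Cref{main_tool} to \Cref{main_corollary}.

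\textbf{Reduction to bands via $SL_d(\Z)$-symmetrizations.} The first step is to prove that for every $M\in SL_d(\Z)$,
\[
 \int_C \id_A*\id_B(x)\,dx \le \int_{C^M}\id_{A^M}*\id_{B^M}(x)\,dx.
\]
Since $M$ induces a measure-preserving bijection of $\T^d$, I may assume $M=I$, so that $S_M=S_d$ acts slice-wise: $(S_d X)_u=S_1(X_u)$ for every $u\in\T^{d-1}$. Writing $x=(x_1,u)\in\T\times\T^{d-1}$, Fubini together with the slice identity $\id_A*\id_B(x_1,u)=\int_{\T^{d-1}}\id_{A_v}*\id_{B_{u-v}}(x_1)\,dv$ gives
\[
 \int_C\id_A*\id_B = \int_{\T^{d-1}}\!\int_{\T^{d-1}}\!\int_{C_u}\id_{A_v}*\id_{B_{u-v}}(x_1)\,dx_1\,dv\,du.
\]
For each fixed pair $(u,v)$, applying the one-dimensional Riesz--Sobolev inequality on $\T$ to the slices $A_v$, $B_{u-v}$, $C_u$ bounds the inner integral by the same integral after replacing each slice by the centred interval of the same size; integrating in $(u,v)$ yields the Steiner-symmetrization inequality. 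Iterating along the sequence of matrices from \Cref{compress_limit_d} drives $A,B,C$ in symmetric difference to bands $I_A\times\T^{d-1}$, $I_B\times\T^{d-1}$, $I_C\times\T^{d-1}$, where $I_A,I_B,I_C\subseteq\T$ are intervals centred at $0$ of sizes $|A|,|B|,|C|$. The trilinear form $(A,B,C)\mapsto\int_C\id_A*\id_B$ is $L^1$-continuous in each argument, by a coupling/Fubini computation of exactly the flavour of \Cref{lem_close}, so the inequality passes to the limit. A direct computation using that bands are products shows that for such bands the left-hand side collapses to $\int_{I_C}\id_{I_A}*\id_{I_B}$, which is the desired right-hand side $\int_{C'}\id_{A'}*\id_{B'}$. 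Hence the $d$-dimensional inequality reduces to its $d=1$ case.

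\textbf{The one-dimensional case.} For $A,B,C\subseteq\T$ I would follow the scheme of \Cref{main_corollary}: approximate each of $A,B,C$ in symmetric difference by unions of intervals with endpoints in $\tfrac{1}{p}\Z$ for $p$ a large prime, and transfer the problem to $\Z_p$. The $\Z_p$ statement needed is
\[
 \sum_{x\in C}\id_A*\id_B(x) \le \sum_{x\in C'}\id_{A'}*\id_{B'}(x)
\]
for $A',B',C'\subseteq\Z_p$ the intervals centred at $0$ of the same sizes as $A,B,C$. For fixed $A,B$, the left-hand side is maximized over $C$ of prescribed size $k$ by taking $C$ to be a top-$k$ super-level set of $\id_A*\id_B$; so it suffices to show that the $k$ largest values of $\id_{A'}*\id_{B'}$ sum to at least the $k$ largest values of $\id_A*\id_B$. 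Pollard's theorem delivers precisely this, in the form used in the proof of \Cref{clm}: from $\sum_x\min(\id_A*\id_B,j)\ge\sum_x\min(\id_{A'}*\id_{B'},j)$ for all $j\ge0$ and the equality $\sum_x\id_A*\id_B=\sum_x\id_{A'}*\id_{B'}=|A|\,|B|$, subtraction gives $\sum_x\max(0,\id_A*\id_B-j)\le\sum_x\max(0,\id_{A'}*\id_{B'}-j)$ for every $j$, which is the majorization inequality for top-$k$ partial sums. Since $\id_{A'}*\id_{B'}$ is symmetric-decreasing about $0$, its top-$k$ set is exactly the centred interval $C'$ of size $k$, closing the $\Z_p$ case. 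Letting $p\to\infty$ as in \Cref{main_corollary} then delivers the one-dimensional result on $\T$.

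\textbf{Main obstacle.} The genuine content lies in the one-dimensional step, and specifically in the identification of the top-$k$ super-level set of $\id_{A'}*\id_{B'}$ with an interval centred at the origin; this is where the centring conventions built into \Cref{main_tool} are being used crucially. A smaller, routine technical point is the $L^1$-continuity of the trilinear form under perturbations in symmetric difference, which is needed both for the discretization in the $d=1$ step and for the passage to the limit in the symmetrization step. The higher-dimensional reduction itself follows the blueprint established for \Cref{main_torus} almost verbatim, so I would not expect surprises there.
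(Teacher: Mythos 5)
The paper does not actually prove \Cref{Riesz-Sobolev}; it is imported as a black box from Christ and Iliopoulou~\cite{Christ}, so there is no ``paper proof'' to compare against. What you have written is a self-contained alternative derivation of the $\T^d$ case of this inequality using the paper's own toolkit (Pollard's theorem, discretization to~$\Z_p$, $SL_d(\Z)$-Steiner symmetrization, and the convergence statement of \Cref{compress_limit_d}), and I believe it is essentially correct.

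The genuinely new step beyond what the paper records is your observation in the one-dimensional case that the Pollard inequality $\sum_x \max(0,\id_{A}*\id_{B}(x)-j)\le\sum_x\max(0,\id_{A'}*\id_{B'}(x)-j)$ for all $j\ge0$ is, together with equality of total sums, exactly equivalent to majorization of the top-$k$ partial sums, i.e.\ $\sum_{i\le k} (\id_A*\id_B)^*(i)\le\sum_{i\le k}(\id_{A'}*\id_{B'})^*(i)$ for every $k$; this is the correct bridge from \Cref{clm}(a) to the Riesz--Sobolev form, since for fixed $A,B$ the left side is optimized by taking $C$ to be a top-$k$ super-level set. The rest of your argument --- the slice-wise Fubini identity reducing the $S_d$-symmetrization step to the $d=1$ inequality, the $L^1$-continuity of $(A,B,C)\mapsto\int_C\id_A*\id_B$ (which is clear from the symmetric form $\iint\id_A(a)\id_B(b)\id_C(a+b)\,da\,db$), and the use of a universal matrix sequence from \Cref{compress_limit_d} applied simultaneously to $A,B,C$ --- faithfully mirrors the route from \Cref{main_tool} through \Cref{thm_compression_T'} to \Cref{lem_d}, with no circularity since \Cref{compress_limit_d} is proved independently of \Cref{Riesz-Sobolev}.

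The one point that needs care, which you already flag, is the parity/centring convention in $\Z_p$: for even set sizes one cannot centre an interval exactly at $0$, so the top-$k$ super-level set of $\id_{A'}*\id_{B'}$ is an interval that may be off-centre by one element. This is precisely the phenomenon handled by the convention in \Cref{main_tool} (centring $A'_i+B'_i$ at $0$ or $1/2$) and by the zig-zag ordering $(0,-1,1,-2,2,\dots)$ in \Cref{clm}(b); the off-by-one discrepancy vanishes in the $p\to\infty$ limit exactly as in \Cref{main_corollary}. With that spelled out, your proof gives the $\T^d$ case of \Cref{Riesz-Sobolev}, which is all that the paper needs (Christ--Iliopoulou prove the more general statement for compact connected abelian groups, which your argument does not attempt).
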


\begin{thm}[\cite{Christ}, Theorem 1.3]\label{Stab_Riesz-Sobolev}
 For $\eps>0$ there exists $c=c(\eps)>0$ such that the following holds.
 Let\/ $A$, $B$ and\/ $C$ be compact subsets of\/ $\T^d$ with $\eps\le |A|,|B|,|C|\le 1-\eps$,
 $|A|+|B|+|C|\le 2-\eps$ and\/ $2\max(|A|,|B|,|C|) \le |A|+|B|+|C|-\eps$.
 Let\/ $A'$, $B'$ and\/ $C'$ be intervals in $\T$ centred around\/ $0$ with the same
 measures as $A$, $B$ and\/ $C$, respectively. Assume that
 \[
  \int_{C}\id_{A}*\id_{B}(x)\,dx \ge -\delta +\int_{C'}\id_{A'}*\id_{B'}(x)\,dx.
 \]
 Then there exist parallel Bohr sets $X$, $Y$ and\/ $Z$ in $\T^d$ such that
 \[
  |A\sdiff X|, |B\sdiff Y|, |C\sdiff Z| \le c\delta^{1/2}.
 \]
\end{thm}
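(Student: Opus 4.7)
The plan is to prove the stability result by reducing to the one-dimensional torus via a Fourier-analytic extraction of a privileged homomorphism $\phi\colon\T^d\to\T$, and then applying one-dimensional Riesz--Sobolev stability. The extremizers in \Cref{Riesz-Sobolev} -- namely centred intervals, viewed as subsets of a distinguished copy of $\T\subseteq\T^d$ -- are characterised on the Fourier side as functions whose Fourier coefficients are supported on a single rank-one sublattice $\Z n_0\subseteq\Z^d$. Thus near-extremality of $A,B,C$ ought to force the Fourier masses of $\id_A,\id_B,\id_C$ to concentrate along some such sublattice.

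First, I would expand the functional by Plancherel as
\[
 \int_C\id_A*\id_B(x)\,dx=\sum_{n\in\Z^d}\hat{\id_A}(n)\hat{\id_B}(n)\overline{\hat{\id_C}(n)},
\]
so that for the extremizers $A',B',C'$ only characters of the form $(n_1,0,\dots,0)$ contribute. Second, I would search over primitive $n_0\in\Z^d$ and show that one such vector captures almost all of the triple-product Fourier mass, in the sense that zeroing out $\hat{\id_A}(n)\hat{\id_B}(n)\hat{\id_C}(n)$ for $n\notin\Z n_0$ alters the functional by only $O(\delta)$. The non-degeneracy hypotheses $|A|+|B|+|C|\le2-\eps$ and $2\max(|A|,|B|,|C|)\le|A|+|B|+|C|-\eps$ enter precisely here: they ensure the 1D maximum is strictly below the trivial bound (so the triangle in lengths is genuinely non-degenerate), and hence Fourier mass off a single line carries real functional cost. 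Third, letting $\tilde A,\tilde B,\tilde C$ denote the pushforward densities of $\id_A,\id_B,\id_C$ under $\phi(x)=n_0\cdot x$, the contribution from $\Z n_0$ is exactly the 1D Riesz--Sobolev integral for these pushforwards, which therefore achieves within $O(\delta)$ of the 1D maximum.

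Fourth, I would invoke a 1D Riesz--Sobolev stability statement (qualitatively easier than the $\T^d$ case, following rearrangement arguments in the spirit of Burchard) to conclude that $\tilde A,\tilde B,\tilde C$ are $O(\delta^{1/2})$-close in $L^1(\T)$ to intervals $I_A,I_B,I_C$ centred at $0$. Finally, to pass from closeness of pushforwards to closeness of sets, I would use a slicing/Chebyshev argument: if the pushforward of $\id_A$ is concentrated on $I_A$, then $A$ is essentially contained in the Bohr set $\phi^{-1}(I_A)$ up to error $O(\delta^{1/2})$, and since $|\phi^{-1}(I_A)|=|A|$ a volume count gives $|A\sdiff\phi^{-1}(I_A)|=O(\delta^{1/2})$. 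Setting $X=\phi^{-1}(I_A)$, $Y=\phi^{-1}(I_B)$, $Z=\phi^{-1}(I_C)$ produces the required parallel Bohr sets.

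The main obstacle is the second step, extracting a single dominant direction $n_0$ from the Fourier data. A priori the Fourier masses of $A,B,C$ could be spread across many different sublattices, each individually contributing a small but nontrivial amount to the triple product; ruling this out requires exploiting non-negativity of the convolution together with arithmetic structure of $\Z^d$, and is where most of the work of \cite{Christ} goes. The square-root exponent $\delta^{1/2}$ in the conclusion is intrinsic and sharp: it reflects the vanishing of the first variation of the Riesz--Sobolev functional at any extremizer together with a non-degenerate second variation in directions transverse to the orbit of the extremizer under translations (and, in higher dimensions, under change of the distinguished homomorphism).
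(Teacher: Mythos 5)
There is nothing in the paper to compare against here: \Cref{Stab_Riesz-Sobolev} is imported verbatim from Christ and Iliopoulou (\cite{Christ}, Theorem 1.3) and is used as a black box in Section~\ref{stability}; the authors give no proof. So your proposal has to stand on its own as a proof of that theorem, and it does not: it is a strategy outline whose central step is explicitly deferred. The heart of the matter is your second step --- showing that, for a near-extremal triple, all but $O(\delta)$ of the triple-product Fourier mass $\sum_n\hat{\id_A}(n)\hat{\id_B}(n)\overline{\hat{\id_C}(n)}$ lives on a single rank-one sublattice $\Z n_0$ --- and you concede yourself that this is ``where most of the work of \cite{Christ} goes.'' Nothing you write supplies that step: non-negativity of the convolution and the $\eps$-non-degeneracy hypotheses do not by themselves prevent the mass from being spread over many incommensurable directions, each contributing a little, and converting such spreading into a quantitative loss in the functional is precisely the hard content of the theorem. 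A sketch that black-boxes the hard part is not a proof.

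There are two further gaps even granting that step. First, the one-dimensional input you invoke is not ``qualitatively easier'' in the form you need it: Burchard-type rearrangement arguments give equality characterisations, not the sharp quantitative stability with exponent $\delta^{1/2}$, which on the circle (with the degenerate regimes only $\eps$-separated) is itself a substantial theorem. Second, the objects you would feed into that 1D statement are the pushforward densities $\tilde A,\tilde B,\tilde C\colon\T\to[0,1]$, which are not indicator functions; you therefore need a functional (bathtub-type) sharp stability theorem asserting that near-extremal triples of $[0,1]$-valued functions are $L^1$-close to indicators of centred intervals. That is strictly stronger than set stability --- near-extremality must force the densities to be nearly $\{0,1\}$-valued in addition to forcing their level sets to be nearly intervals --- and you give no argument for it. (Your final slicing step is fine: by Fubini, $|A\sdiff\phi^{-1}(I_A)|=\lVert\tilde A-\id_{I_A}\rVert_{L^1(\T)}$, so closeness of pushforwards does transfer to closeness of sets to Bohr sets.) As it stands, the proposal identifies a plausible architecture but leaves the two decisive quantitative ingredients unproved, so it cannot replace the citation of \cite{Christ}.
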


The main new ingredient we need is the following result.

\begin{thm}\label{Main_ingredient}
 For $\eps>0$ and $d,n \in \N$ there exists $\tau=\tau(\eps,n)$ such that the following holds.
 Let $A$ and\/ $B$ be compact sets in $\T^d$ with $2\eps \le |A|,|B| \le 1-2\eps$, and let $A'$ and $B'$ be intervals in $\T$ centred at the origin with the same measures
 as $A$ and\/ $B$ respectively.
 
 Assume that
 \[
  \int_{C}\id_{A}*\id_{B}(x)\,dx \le -\alpha +\int_{C'}\id_{A'}*\id_{B'}(x)\,dx
 \]
 for every compact set\/ $C\subseteq\T^d$ with  $\eps \le |C| \le 1-\eps$,
 $|A|+|B|+|C| \le 2-\eps$ and\/ $2\max(|A|,|B|,|C|) \le |A|+|B|+|C|-\eps$, where $C'$is the interval in $\T$ centred at the origin with the same measure as $C$. Then 
 \[\int_{\T^d}(\id_{A}*\id_{B}(x))^n dx
  \le 
  -\tau\alpha +\int_{\T}(\id_{A'}*\id_{B'}(x))^n dx.
 \]
\end{thm}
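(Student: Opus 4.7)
My plan is to reformulate the hypothesis as a pointwise majorization deficit for the decreasing rearrangements of $f:=\id_A*\id_B$ and $f':=\id_{A'}*\id_{B'}$, and then convert this into the $n$-th moment deficit via Stieltjes integration by parts, using the explicit piecewise-linear structure of the rearrangement of $f'$. Denote by $\phi,\psi\colon[0,1]\to[0,\infty)$ the (right-continuous) decreasing rearrangements of $f$ and $f'$, and set $F(s)=\int_0^s\phi$, $F'(s)=\int_0^s\psi$, $G=F'-F$. By layer cake, $\int_{\T^d}f^n\,dx=\int_0^1\phi(s)^n\,ds$ and $\int_\T(f')^n\,dx=\int_0^1\psi(s)^n\,ds$, and by \Cref{Riesz-Sobolev} one has $G\ge 0$ on $[0,1]$ with $G(0)=G(1)=0$.

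Applying the hypothesis to the compact super-level set $C=\{f\ge\phi(s)\}$ (trimmed on any plateau so that $|C|=s$), for which $\int_Cf=F(s)$, the three size constraints on $|C|$ in the hypothesis are exactly equivalent to $s\in I:=[s_1,s_2]$ where $s_1=||A|-|B||+\eps$ and $s_2=\min(|A|+|B|,\,2-|A|-|B|)-\eps$. Hence the hypothesis yields $G(s)\ge\alpha$ for all $s\in I$.

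Now let $Q(s)=\sum_{k=0}^{n-1}\psi(s)^k\phi(s)^{n-1-k}$, which is non-negative and decreasing on $[0,1]$, as each factor is. Since $\psi^n-\phi^n=(\psi-\phi)Q$ and $dG=(\psi-\phi)\,ds$, Stieltjes integration by parts and $G(0)=G(1)=0$ give
\[
 \int_0^1(\psi^n-\phi^n)\,ds=\int_{[0,1]}G(s)\,(-dQ(s))\ge\alpha\bigl(Q(s_1)-Q(s_2)\bigr)\ge\alpha\bigl(\psi(s_1)^{n-1}-\psi(s_2)^{n-1}\bigr),
\]
where I used that $-dQ$ is a non-negative Borel measure, $G\ge\alpha$ on $I$, and that the summands of $Q(s_1)-Q(s_2)$ with $k<n-1$ are non-negative (as differences of products of non-negative decreasing functions between $s_1\le s_2$).

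The remaining task is to bound $\psi(s_1)^{n-1}-\psi(s_2)^{n-1}$ below by a constant $c(\eps,n)>0$. Because $A',B'$ are centred intervals in $\T$, $f'$ is trapezoidal (with a constant floor $|A|+|B|-1$ in the wrap case $|A|+|B|>1$), so $\psi$ is explicitly piecewise linear: constant equal to $\min(|A|,|B|)$ on $[0,||A|-|B||]$, linear with slope $-1/2$ on $[||A|-|B||,\,\min(|A|+|B|,2-|A|-|B|)]$, and (in the wrap case) constant equal to $|A|+|B|-1$ thereafter. Using $|A|,|B|\in[2\eps,1-2\eps]$, a short two-case check (corresponding to $|A|+|B|\le 1$ and $|A|+|B|>1$) verifies that $I$ lies entirely in the linear regime, with $s_2-s_1\ge 2\eps$ and $\psi(s_2)\ge\eps/2$. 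Hence $\psi(s_1)-\psi(s_2)=(s_2-s_1)/2\ge\eps$, and by the mean value theorem $\psi(s_1)^{n-1}-\psi(s_2)^{n-1}\ge(n-1)\psi(s_2)^{n-2}\cdot\eps\ge(n-1)\eps^{n-1}/2^{n-2}$. Taking $\tau(\eps,n):=(n-1)\eps^{n-1}/2^{n-2}$ concludes the proof. The main technical point is the elementary but case-based verification that $I$ sits inside the linear part of $\psi$ with quantitative lower bounds on $s_2-s_1$ and $\psi(s_2)$; without these uniform bounds the argument would collapse whenever $\psi$ has large flat regions.
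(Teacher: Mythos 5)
Your proposal is correct and takes essentially the same route as the paper: pass to decreasing rearrangements, use the Riesz--Sobolev inequality to show that the primitives satisfy $F\le F'$ with a uniform gap of $\alpha$ on the interval $I$, and then transfer this gap to the $n$-th powers by an Abel/integration-by-parts argument against the telescoping factor $Q=\sum_k\psi^k\phi^{n-1-k}$ (the paper's $h=g^{n-1}+g^{n-2}f+\cdots+f^{n-1}$), exploiting the explicit piecewise-linear form of $\psi$ to get a quantitative drop $Q(s_1)-Q(s_2)\gtrsim_{\eps,n}1$. The only structural difference is that the paper extracts the integration-by-parts step as a standalone lemma (Theorem~\ref{cont_arbitrary_functions}); your inline Stieltjes formulation is slightly more careful, since it avoids the paper's tacit (and unjustified) assumption that $h$ is continuously differentiable, while otherwise being the same computation.
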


We shall derive this result from the following general result about arbitrary functions. 

\begin{thm}\label{cont_arbitrary_functions}
 Let $f,g \colon [0,1] \to [0,1]$ be continuous decreasing functions with
 \[
  \int_0^t f(x)\,dx \le \int_0^t g(x)\,dx
 \]
 for $0 \le t \le 1$. Assume that there exists an interval $I \subseteq [0,1]$
 with $g'(t)=-\gamma$ and
 \[
  \int_0^t f(x)\,dx \le -\alpha+\int_0^t g(x)\,dx
 \]
 for all\/ $t\in I$. Then
 \[
  \int_0^{1} f(x)^n dx \le -(\gamma |I|)^{n-1}\alpha+\int_0^{1}g(x)^n dx.
 \]
\end{thm}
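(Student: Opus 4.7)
The plan is to factorise $g^n-f^n=(g-f)\,P$ where $P$ is a sum of products of $f$ and $g$, then to transfer the hypothesis on $\Phi(x):=\int_0^x(g-f)$ onto $P$ by Riemann--Stieltjes integration by parts, and finally to extract the factor $(\gamma|I|)^{n-1}$ from the linearity of $g$ on~$I$.

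I would set
\[
 P(x):=\sum_{i=0}^{n-1} g(x)^i f(x)^{n-1-i},
\]
so that $g^n-f^n=(g-f)\,P$. Since $f,g$ are non-negative, continuous and decreasing, every summand $g^i f^{n-1-i}$ (and hence $P$) is non-negative, continuous and decreasing on $[0,1]$. Integration by parts, combined with $\Phi(0)=0$, gives
\[
 \int_0^1 (g-f)P\,dx=\int_0^1 P\,d\Phi=P(1)\Phi(1)+\int_0^1\Phi\,d(-P).
\]
Both terms on the right are non-negative: the first because $P(1),\Phi(1)\ge 0$, the second because $\Phi\ge 0$ on $[0,1]$ (by the global hypothesis) and $-dP$ is a non-negative Borel measure since $P$ is decreasing. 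Dropping the boundary term and then restricting the remaining integral to $I=[a,b]$, where $\Phi\ge\alpha$, yields
\[
 \int_0^1 (g^n-f^n)\,dx\;\ge\;\int_I\Phi\,d(-P)\;\ge\;\alpha\,\bigl(P(a)-P(b)\bigr).
\]

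The main obstacle is then the lower bound $P(a)-P(b)\ge(\gamma|I|)^{n-1}$; this is the only place where the hypothesis $g'\equiv-\gamma$ on~$I$ enters. I would obtain it by retaining only the single summand $g^{n-1}$ of~$P$: every other summand $g^i f^{n-1-i}$ is non-negative and decreasing, so the differences $g(a)^i f(a)^{n-1-i}-g(b)^i f(b)^{n-1-i}$ are non-negative and can be discarded. Thus $P(a)-P(b)\ge g(a)^{n-1}-g(b)^{n-1}$. Since $g(a)-g(b)=\gamma|I|$, the elementary inequality $(u+v)^{n-1}\ge u^{n-1}+v^{n-1}$ for $u,v\ge 0$ and $n\ge 2$ (retain only the two extreme terms in the binomial expansion), applied to $u=g(b)$ and $v=\gamma|I|$, yields $g(a)^{n-1}-g(b)^{n-1}\ge(\gamma|I|)^{n-1}$. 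Chaining the estimates completes the proof. The case $n=1$ is degenerate --- the sum $P$ collapses to the constant $1$ and the bound from $P(a)-P(b)$ vanishes --- but only $n\ge 2$ arises in the intended application.
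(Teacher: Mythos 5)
Your proof is correct and follows essentially the same route as the paper's: the same factorisation $g^n-f^n=(g-f)P$ with $P=\sum_{i=0}^{n-1}g^if^{n-1-i}$, the same integration-by-parts manoeuvre transferring the hypothesis on $\Phi=\int_0^{\cdot}(g-f)$ to $P$, and the same final estimate $P(a)-P(b)\ge g(a)^{n-1}-g(b)^{n-1}\ge(\gamma|I|)^{n-1}$. Two minor differences: you use Riemann--Stieltjes integration by parts against $d(-P)$, which is cleaner than the paper's tacit assumption that $P$ is continuously differentiable (not automatic when $f$ and $g$ are merely continuous); and for the final step you apply the superadditivity $(u+v)^{n-1}\ge u^{n-1}+v^{n-1}$ to $u=g(b)$, $v=\gamma|I|$, whereas the paper writes $g(a)^{n-1}-g(b)^{n-1}\ge g(a)^{n-2}(g(a)-g(b))\ge(\gamma|I|)^{n-1}$ using $g(a)\ge\gamma|I|$ --- both are elementary and equivalent in spirit. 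Your observation that the bound degenerates at $n=1$ is also accurate (the theorem as stated is in fact vacuous or false for $n=1$ unless $1\in I$), though the paper does not remark on this.
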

\begin{proof}
Define $F(t)=\int_0^t f(x)\,dx$ and $G(t)=\int_0^t g(x)\,dx$ and assume $h\colon [0,1]\to[0,\infty)$
is decreasing and continuously differentiable. Then using integration by parts
\begin{align*}
 \int_0^{1} (g(x)-f(x))h(x)dx
 &=(G(x)-F(x))h(x)\Big|_0^{1}-\int_0^{1} (G(x)-F(x))h'(x)dx\\
 &\ge -\alpha \int_I h'(x)=\alpha(h(a)-h(b)),
\end{align*}
where $I=(a,b)$ and we have used that 
$G(0)=F(0)$, $G(x)-F(x)\ge 0$ and $G(x)-F(x)\ge\alpha$ on $I$. 

Apply this to the function $h(x)=g(x)^{n-1}+g(x)^{n-2}f(x)+\dots+f(x)^{n-1}$
and note that $h$ is decreasing and $g(a)-g(b)\ge \gamma(b-a)$, so
\[
 h(a)-h(b)\ge g(a)^{n-1}-g(b)^{n-1}\ge g(a)^{n-2}(g(a)-g(b))\ge (\gamma|I|)^{n-1}.
\]
We deduce that
\[
 \int_0^{1}g(x)^ndx-\int_0^{1}f(x)^ndx=\int_0^{1}(g(x)-f(x))h(x)\,dx
 \ge \alpha(h(a)-h(b))\ge (\gamma|I|)^{n-1}\alpha.
 \qedhere
\]
\end{proof}

\begin{proof}[Proof of \Cref{Main_ingredient}]
Given a continuous function $h \colon \T^d \to [0,1]$,
the \emph{decreasing rearrangement} of $h$ is the unique continuous decreasing
function $\tilde h\colon [0,1] \to [0,1]$ such that for every $\lambda \in [0,1]$ 
\[
 |\{x : h(x) \ge \lambda\}|=|\{x : \tilde h(x) \ge \lambda\}|.
\]

Note that $\id_{A}*\id_{B} \colon \T^d \to [0,1]$ and $\id_{A'}*\id_{B'} \colon \T \to [0,1]$
are continuous functions. Let $f$ and $g$ be the decreasing rearrangements of $\id_{A}*\id_{B}$
and $\id_{A'}*\id_{B'}$, respectively. For $x\in [-\frac{1}{2},\frac{1}{2})=:\T$, We have
\[
 \!\!\id_{A'}*\id_{B'}(x)=\begin{cases}
  \min(|A|,|B|), & \text{if } 2|x| \le ||A|-|B||;\\
  \max(0,|A|+|B|-1), & \text{if } 2|x| \ge \min(|A|+|B|,2-|A|-|B|);\\
  \frac{|A|+|B|}{2}-x, & \text{otherwise.}
 \end{cases}
\]
For $x \in [0,1]$, we have
\[
 \qquad g(x)=\begin{cases}
  \min(|A|,|B|), & \text{if } x \le ||A|-|B||;\\
  \max(0,|A|+|B|-1), & \text{if } x \ge \min(|A|+|B|,2-|A|-|B|);\\
  \frac{|A|+|B|}{2}-\frac{x}{2}, & \text{otherwise.}
 \end{cases}
\]

For $t \in [0,1]$ set $C'_t=[-\frac{t}{2},\frac{t}{2}]\subseteq\T$.
As $\id_{A'}*\id_{B'}(x) \ge \id_{A'}*\id_{B'}(y)$ for all $x \in C'_t$
and $y \notin C'_t$, it follows that
\[
 \int_{C_t'} \id_{A'}*\id_{B'}(x)\,dx= \int_0^t g(x)\,dx.
\]
As $f$ is a decreasing rearrangement of $\id_{A}*\id_{B}$, it follows that for $t \in [0,1]$
there exists a compact set $C_t$ in $\T^d$ with $|C_t|=t$ such that 
\[
 \int_{C_t} \id_{A}*\id_{B}(x)\,dx= \int_0^t f(x)\,dx.
\]
%By construction, $C'_1=[-\frac{1}{2},\frac{1}{2}] =\T$.
%Because $C_1$ is a compact set in $\T^d$ with $|C_1|=1$, we also have $C_1=\T^d$. We deduce that
%\[
%\int_0^1 f(x)dx=\int_{\T^d} \id_{A}*\id_{B}(x) dx= |A||B|=|A'||B'|= \int_{\T} \id_{A'}*\id_{B'}(x) dx= \int_0^1 g(x)dx.
%\]
For $t \in [0,1]$, $C'_t$, $A'$ and $B'$ are centred intervals with the same measures
as the compact sets $C_t$, $A$ and $B$ in $\T^d$, respectively, so \Cref{Riesz-Sobolev} implies that 
\[
 \int_0^t f(x)\,dx=\int_{C_t} \id_{A}*\id_{B}(x)\,dx
 \le \int_{C_t'} \id_{A'}*\id_{B'}(x)\,dx= \int_0^t g(x)\,dx.
\]
Set $I=[||A|-|B||+\eps,\min(|A|+|B|,2-|A|-|B|)-\eps]$.
As $2\eps \le |A|,|B| \le 1-2\eps$, we have $|I| \ge 2\eps$. By construction, for $t \in I$, $g'(t)=-1/2$. 
Moreover, for $t \in I$, the compact set $C_t$ in $\T^d$, has size~$t$. Hence, $\eps \le |C_t| \le 1-\eps$,
$|A|+|B|+|C_t| \le 2-\eps$ and $2\max(|A|,|B|,|C_t|) \le |A|+|B|+|C_t|-\eps$. By hypothesis, for $t\in I$ we have
\[
 \int_0^t f(x)\,dx=\int_{C_t} \id_{A}*\id_{B}(x)\,dx
 \le -\alpha+\int_{C_t'} \id_{A'}*\id_{B'}(x)\,dx= -\alpha +\int_0^t g(x)\,dx.
\]
Using these inequalities together with the fact that $|I| \ge 2\eps$ and 
$g'(t)=-1/2$ for $t \in I$, \Cref{cont_arbitrary_functions} gives
\[
 \int_0^1 f(x)^n dx \le -\eps^{n-1}\alpha+\int_0^1 g(x)^n dx.
\]
As $f$ and $g$ are decreasing rearrangements of $\id_{A}*\id_{B}$ and $\id_{A'}*\id_{B'}$,
respectively, we deduce that $f^n$ and $g^n$ are decreasing rearrangements of
$(\id_{A}*\id_{B})^n$ and $(\id_{A'}*\id_{B'})^n$, respectively. Hence
\[
 \int_{\T^d} (\id_{A}*\id_{B}(x))^n dx= -\eps^{n-1}\alpha+ \int_{\T} (\id_{A'}*\id_{B'}(x))^n dx.
\]
The conclusion follows with $\tau=\eps^{n-1}$.
\end{proof}

\begin{proof}[Proof of \Cref{stab}]
Let $c=c(\eps/2)$ be the constant given by \Cref{Stab_Riesz-Sobolev} and let
$\tau=\tau(\eps/2, n)$ be the constant given by \Cref{Main_ingredient}.
Set $K=c\tau^{-1/2}(1-\eps)^{n/2}$. By approximating measurable sets with compact sets
and applying \Cref{lem_close}, we can assume $A$ and $B$ are compact sets in~$\T^d$.
By hypothesis, we know that
\[
 \E(|\medcup_i (A+b_i)|) \le \delta + \E(|\medcup_i (A'+b'_i)|),
\]
which after taking complements inside $\T^d$ and $\T$ is equivalent to
\[
 \E(|\medcap_i (A^c+b_i)|) \ge -\delta + \E(|\medcap_i (A^{\prime c}+b'_i)|).
\]
After scaling and using $\eps\le  |B|=|B'| \le 1-\eps$, we can rewrite the last inequality as 
\[
 \int_{b_1,\dots,b_n\in B} |\medcap_i (A^c+b_i)|\,db_1\dots db_n
 \ge -\delta (1-\eps)^n +
 \int_{b'_1,\dots,b'_n\in B'} |\medcap_i (A^{\prime c}+b'_i)|\,db'_1\dots db'_n,
\]
which is equivalent to
\[
 \int_{\T^d} |(x-A^c) \cap B|^n\,dx
 \ge -\delta (1-\eps)^n + \int_{\T}|(x-A^{\prime c}) \cap B'|^n\,dx,
\]
or in other words
\[
 \int_{\T^d}(\id_{A^c}*\id_{B}(x))^n dx
 \ge -\delta (1-\eps)^n + \int_{\T}(\id_{A^{\prime c}}*\id_{B'}(x))^n dx.
\]
To simplify the notation, set
\[
 D=A^c\qquad\text{and}\qquad
 D'=A^{\prime c} + \tfrac{1}{2}.
\]
Recall $B'$ is a centred interval with the same size as $B$ and note that $D'$
is a centred interval with the same size as~$D$. Moreover, note that $\eps \le |D|,|B| \le 1-\eps$. 
With this notation, we have
\begin{equation}\label{eq_equiv_cont}
 \int_{\T^d}(\id_{D}*\id_{B}(x))^n dx
 \ge -\delta (1-\eps)^n+\int_{\T}(\id_{D'}*\id_{B'}(x))^n dx.
\end{equation}

By \Cref{Main_ingredient} with parameters $\eps/2$, $n$ and
$\alpha=\tau^{-1}\delta(1-\eps)^n$ applied to the sets $D$ and~$B$, it follows that there exists
a compact set $C$ in $\T^d$ with $\eps/2 \le |C| \le 1-\eps/2$, $|D|+|B|+|C| \le 2-\eps/2$
and $2\max(|D|,|B|,|C|) \le |D|+|B|+|C|-\eps/2$ such that, if $C'$ is a centred interval
in $\T$ with the same size as~$C$, then
\[
 \int_{C}\id_{D}*\id_{B}(x)\,dx
 \ge - \tau^{-1}\delta (1-\eps)^n + \int_{C'}\id_{D'}*\id_{B'}(x)\,dx.
\]

By \Cref{Stab_Riesz-Sobolev} with parameters $\eps/2$
applied to the compact sets $D$, $B$ and $C$ in~$\T^d$, there exist parallel Bohr
sets $W$, $Y$ and $Z$ in $\T^d$ such that
\[
 |D\sdiff W|, |B\sdiff Y|, |C\sdiff Z|
 \le c\tau^{-1/2}\delta^{1/2}(1-\eps)^{n/2} = K\delta^{1/2}.
\]
Recalling that $D=A^c$, and setting $X=W^c$ (which is also a Bohr set, or more precisely the interior of a Bohr set, parallel
to $Y$ and~$Z$), we conclude that 
 \[
 |A\sdiff X|, |B\sdiff Y|, |C\sdiff Z| \le K\delta^{1/2}.
 \qedhere
\]
\end{proof}

\section{Sharp Stability Result  for \Cref{main_sphere}}\label{stability_sphere}

Given a measurable set $X$ in $\R^d$ we define $X'$ to be  the ball in $\R^d$ centred
around $0$ with the same volume as~$X$. 
In this section we prove the following sharp stability result for \Cref{main_sphere}. 

\begin{thm}\label{stab_Rd}
 For $\eps>0$ and\/ $d,n\in\N$, there exists $K=K(\eps,d,n)>0$ such that the following holds.
 Let\/ $A$ and\/ $B$ be measurable subsets of\/ $\R^d$ with $\eps \le |A|,|B| \le 1$.
 Let\/ $A'$ and\/ $B'$ be balls in $\R^d$ centred around $0$ with the same volumes
 as $A$ and\/ $B$ respectively. Assume that if we choose $b_1,\dots,b_n$ and\/ $b'_1,\dots,b'_n$
 uniformly at random from $B$ and\/~$B'$, respectively, then 
 \[
  \E(|A+\{b_1,\dots,b_n\}|)\le \delta+\E(|A'+\{b'_1,\dots,b'_n\}|).
 \]
 Then there exist homothetic ellipsoids $X$ and\/ $Y$ in $\R^d$ such that
 \[
  |A \sdiff X|,|B \sdiff Y|\le K\delta^{1/2}.
 \]
\end{thm}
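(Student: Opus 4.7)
The plan is to mirror the proof of \Cref{stab} for the torus, replacing its three key ingredients by their Euclidean analogues: (i) a reformulation of $\E(|A+\{b_i\}|)$ as an integral of a function of a convolution (bypassing the complement trick, which is unavailable in $\R^d$); (ii) a decreasing-rearrangement/majorization argument based on the Euclidean Riesz--Sobolev inequality, whose extremizers are balls up to the action of $SL_d(\R)$; and (iii) a sharp stability statement for Euclidean Riesz--Sobolev, whose extremizers are homothetic ellipsoids. The starting point is the identity
\[
 \E(|A+\{b_1,\dots,b_n\}|)=\int_{\R^d}\Phi\bigl((\id_A*\id_B)(x)/|B|\bigr)\,dx,
\]
with $\Phi(t)=1-(1-t)^n$ strictly concave and increasing on $[0,1]$, and similarly for $A'$, $B'$. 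After approximating $A$ and $B$ by compact sets via \Cref{lem_close}, I pass to the decreasing rearrangements $\tilde f,\tilde h\colon[0,\infty)\to[0,\infty)$ of $\id_A*\id_B$ and $\id_{A'}*\id_{B'}$, so that the hypothesis becomes $\int\Phi(\tilde f/|B|)\,dt\le\delta+\int\Phi(\tilde h/|B|)\,dt$, while Riesz--Sobolev in $\R^d$ gives $F(t):=\int_0^t\tilde f\le\int_0^t\tilde h=:H(t)$ with equality at $t=0$ and $t=\infty$.

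The key new step, the Euclidean analogue of \Cref{Main_ingredient}, is to extract from the near-equality of $\Phi$-integrals a quantitative bound on the majorization gap $H-F$. A Hardy--Littlewood--P\'olya integration-by-parts argument in the spirit of \Cref{cont_arbitrary_functions}, adapted to the concave $\Phi$, gives the identity
\[
 \int_0^\infty\bigl(\Phi(\tilde f/|B|)-\Phi(\tilde h/|B|)\bigr)\,dt=\int_0^\infty(H(t)-F(t))\,dP(t),
\]
where $P(t)=|B|^{-1}\int_0^1\Phi'\bigl((s\tilde h(t)+(1-s)\tilde f(t))/|B|\bigr)\,ds$ is a nonnegative increasing function (since $\Phi'$ is decreasing and both $\tilde h$ and $\tilde f$ are decreasing in $t$). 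For the ball--ball convolution $\id_{A'}*\id_{B'}$, $\tilde h$ is strictly decreasing with derivative bounded away from zero on a subinterval of length $\ge c(\eps,d)$ (corresponding to the spherical shell between the difference and sum of the two radii). On such an interval $I=[a,b]$ one has $dP/dt\ge c(\eps,d,n)>0$, so an averaging argument produces a point $t_0\in I$ with $H(t_0)-F(t_0)\le K_1\delta$. Taking $C$ to be the super-level set $\{\id_A*\id_B\ge\tilde f(t_0)\}$ and $C'$ the centred ball of volume $t_0$ (so $\int_C\id_A*\id_B=F(t_0)$ and $\int_{C'}\id_{A'}*\id_{B'}=H(t_0)$), this yields $\int_C\id_A*\id_B\ge\int_{C'}\id_{A'}*\id_{B'}-K_1\delta$, and the choice of $t_0$ automatically ensures the non-degeneracy conditions needed below.

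Finally, applying the sharp stability theorem for Euclidean Riesz--Sobolev (due to Christ) to $A$, $B$, $C$ produces homothetic ellipsoids $X,Y,Z\subset\R^d$ with $|A\sdiff X|,|B\sdiff Y|,|C\sdiff Z|\le K\delta^{1/2}$, which is the desired conclusion. The main obstacle will be the Euclidean adaptation of \Cref{Main_ingredient}: unlike in the torus setting, $\id_{A'}*\id_{B'}$ has a curved, non-piecewise-linear radial profile, so the lower bound on $|\tilde h'|$ on a subinterval of definite length must be computed explicitly in terms of $\eps$ and $d$; moreover, because $\Phi$ is concave rather than convex the signs in \Cref{cont_arbitrary_functions} must be inverted, and additional care is needed where $\tilde f/|B|\approx 1$, since $\Phi'$ vanishes there and the argument can degenerate unless $t_0$ is chosen inside the strictly decreasing range of $\tilde h$ (which is guaranteed by the hypotheses $\eps\le|A|,|B|\le 1$).
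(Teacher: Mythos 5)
Your proposal follows the same route as the paper: rewrite $\E(|A+\{b_i\}|)$ as $\int_{\R^d}\bigl(1-(1-\id_A*\id_B/|B|)^n\bigr)\,dx$, pass to decreasing rearrangements, control $\int_0^t$ via the Euclidean Riesz--Sobolev inequality, use a Hardy--Littlewood--P\'olya integration-by-parts argument on an interval where the ball--ball profile has negative slope to extract a single set $C$ witnessing near-equality in Riesz--Sobolev, and finish with Christ's sharp stability theorem. This is exactly the paper's strategy (Theorems~\ref{Main_ingredient_Rd}, \ref{cont_arbitrary_functions_Rd}, \ref{Stab_Riesz-Sobolev_Rd}), modulo presentation of the divided-difference step.

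One detail to tighten in your version: you claim $dP/dt\ge c(\eps,d,n)$ pointwise on $I$, but $dP/dt$ involves $\Phi''$ evaluated at $\bigl(s\tilde h(t)+(1-s)\tilde f(t)\bigr)/|B|$, and $\Phi''(u)=-n(n-1)(1-u)^{n-2}\to 0$ as $u\to 1$ when $n\ge 3$. Choosing $t_0$ in the strictly decreasing range of $\tilde h$ does not by itself keep this argument away from $1$, because $\tilde f(t)$ can still be near $|B|$ for small $s$; the fix is to lower-bound $dP/dt$ using only the portion $s\in[\tfrac12,1]$, where the $\tilde h$-term dominates the mixture and $\tilde h(t)/|B|$ is bounded away from $1$. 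The paper avoids the issue entirely by working with the algebraic factorization $h(x)=(1-g(x))^{n-1}+\dots+(1-f(x))^{n-1}$ and bounding $h(b)-h(a)\ge(1-g(b))^{n-1}-(1-g(a))^{n-1}$, which depends only on the explicit profile $g$.
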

Note that the exponent $1/2$ in \Cref{stab_Rd} is optimal by the same examples as
given after \Cref{stab}. We will make use of the following Riesz--Sobolev inequality,
due to Riesz~\cite{riesz}, together with its stability version, which
is due to Christ~\cite{Chr}.

\begin{thm}[\cite{riesz}]\label{Riesz-Sobolev_Rd}
 Let\/ $A$, $B$ and\/ $C$ be measurable subsets of\/~$\R^d$. Let\/ $A'$, $B'$ and\/ $C'$ be balls
 in $\R^d$ centred around\/ $0$ with the same volumes as $A$, $B$ and\/ $C$, respectively. Then
 \[
  \int_{C}\id_{A}*\id_{B}(x)\,dx \le \int_{C'}\id_{A'}*\id_{B'}(x)\,dx.
 \]
\end{thm}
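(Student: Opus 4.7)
The plan is to prove the Riesz--Sobolev inequality by Steiner symmetrization: I would show that every Steiner symmetrization in a fixed direction is monotone for the relevant trilinear functional, and then iterate to pass from arbitrary measurable sets to balls. To begin, I rewrite
\[
 I(A,B,C):=\int_C \id_A*\id_B(x)\,dx
 = \int_{\R^d}\!\int_{\R^d}\id_A(y)\id_B(z)\id_C(y+z)\,dy\,dz,
\]
which is continuous in each of its three arguments with respect to symmetric difference (via the trivial bound $|I(A'',B,C)-I(A,B,C)|\le |B|\cdot|A\sdiff A''|$ and its analogues) and invariant under the two-parameter family of joint shifts $(A,B,C)\mapsto(A+s,B+t,C+s+t)$.

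The main reduction is the assertion that, for every direction $v\in S^{d-1}$,
\[
 I(A,B,C) \le I(S_v A, S_v B, S_v C),
\]
where $S_v$ is Steiner symmetrization in direction~$v$. By rotation invariance it suffices to take $v=e_1$, and by Fubini with $\R^d=\R\times\R^{d-1}$,
\[
 I(A,B,C) = \int_{\R^{d-1}}\!\int_{\R^{d-1}} I_1\bigl(A_{u_1},B_{u_2},C_{u_1+u_2}\bigr)\,du_1\,du_2,
\]
where $X_u=\{t\in\R:(t,u)\in X\}$ and $I_1$ is the one-dimensional analogue of~$I$. Since $(S_{e_1}X)_u$ is by definition the centred interval in $\R$ of length $|X_u|$, the $d$-dimensional monotonicity reduces at once to the one-dimensional Riesz--Sobolev inequality
\[
 I_1(E,F,G) \le I_1(E^\ast,F^\ast,G^\ast) \qquad (E,F,G\subseteq \R\text{ measurable}),
\]
applied pointwise in $(u_1,u_2)$.

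The main obstacle, where essentially all the actual work lies, is proving this one-dimensional inequality. I would approximate $E,F,G$ from inside by finite disjoint unions of closed intervals and then run the classical compression/sliding argument: if any of the three sets has two distinct components, then the rightmost component can be translated leftward by a small amount without decreasing $I_1$, as one checks directly from the representation
\[
 I_1(E,F,G)=\int_F |E\cap(G-z)|\,dz,
\]
which is a continuous piecewise-linear function of the translation parameter whose one-sided derivatives have the right sign. Iterating this move for each of the three arguments reduces us to the case where $E$, $F$ and $G$ are each a single interval; the two-parameter translation invariance noted above then centres $E$ and $F$ at $0$, and a direct check that $\id_{E^\ast}*\id_{F^\ast}$ is a symmetric unimodal `tent' function (so that among sets of given measure $G^\ast$ is the one maximizing the integral against it) closes the case.

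With Steiner monotonicity in hand, I conclude by invoking the theorem of Volčič already used in the paper's derivation of \Cref{main_sphere}: there exists a single sequence of directions $v_1,v_2,\dots$ such that the iterated symmetrizations $A_k=S_{v_k}\cdots S_{v_1}A$, and analogously $B_k$ and $C_k$, satisfy $|A_k\sdiff A'|,|B_k\sdiff B'|,|C_k\sdiff C'|\to 0$. By Steiner monotonicity the sequence $I(A_k,B_k,C_k)$ is non-decreasing and bounded above by $\min(|A||B|,|A||C|,|B||C|)$, and continuity of $I$ in symmetric difference identifies its limit as $I(A',B',C')$. Hence
\[
 I(A,B,C) \le \lim_{k\to\infty} I(A_k,B_k,C_k) = I(A',B',C'),
\]
which is the Riesz--Sobolev inequality.
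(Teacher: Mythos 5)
The paper does not prove this statement at all: it is quoted verbatim from Riesz \cite{riesz}, so what you are really offering is a from-scratch proof of a classical theorem. Your outer scaffolding is correct and standard: the identity $I(A,B,C)=\iint \id_A(y)\id_B(z)\id_C(y+z)\,dy\,dz$, its continuity in each argument with respect to symmetric difference, the Fubini computation showing that monotonicity of $I$ under Steiner symmetrization in the direction $e_1$ follows from the one-dimensional inequality applied to the sections $(A_{u_1},B_{u_2},C_{u_1+u_2})$, and the final passage to balls via Vol\v{c}i\v{c}'s convergence theorem together with that continuity. The single-interval case via the unimodal tent function $\id_{E^\ast}*\id_{F^\ast}$ is also fine.

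The gap is exactly where you say the work lies: the reduction of the one-dimensional inequality to the single-interval case. The claim that if one of the three sets is disconnected then its rightmost component can be slid to the left by a small amount without decreasing $I_1$ is false. Take $E=[0,\tfrac12]\cup[\tfrac{19}{2},10]$, $F=[0,1]$, $G=[10,11]$. Only the right component of $E$ contributes, and $I_1(E,F,G)=\int_{19/2}^{10}(y-9)\,dy=\tfrac38$; translating that component left by $\eps$ gives $\tfrac38-\tfrac{\eps}{2}$, a strict decrease (sliding it right increases the value). So the sign of the one-sided derivative depends on the positions of $F$ and $G$, not just on which component is rightmost; moreover, in configurations where the favourable direction is away from the rest of the set, the move never merges components, so the iteration also need not terminate. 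The classical arguments (Riesz's original proof and its expositions in Hardy--Littlewood--P\'olya, Lieb--Loss, Burchard) avoid this by sliding in a coordinated fashion: components of two, or all three, of the sets are translated simultaneously, exploiting precisely the invariance $(E,F,G)\mapsto(E+s,F+t,G+s+t)$ you noted, so that the relevant one-sided derivative can be controlled, with an induction on the total number of intervals. As written, your argument establishes the $d$-dimensional reduction and the single-interval case but not the one-dimensional Riesz--Sobolev inequality itself; you should either carry out the coordinated sliding argument or simply cite the one-dimensional result, which is in effect what the paper does by attributing the whole statement to \cite{riesz}.
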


%Here is the stability version of this result. 

\begin{thm}[\cite{Chr}, Theorem 2]\label{Stab_Riesz-Sobolev_Rd}
 For $\eps>0$ and\/ $d \in \N$ there exists $c=c(\eps,d)>0$ such that the following holds.
 Let\/ $A$, $B$ and\/ $C$ be compact subsets of\/ $\R^d$ with $\eps \le |A|^{1/d},|B|^{1/d},|C|^{1/d}\le 2$
 and\/ $2\max(|A|^{1/d},|B|^{1/d},|C|^{1/d}) \le |A|^{1/d}+|B|^{1/d}+|C|^{1/d}-\eps$.
 Let $A'$, $B'$ and\/ $C'$ be balls in $\R^d$ centred around\/ $0$ with the same volumes
 as $A$, $B$ and\/~$C$, respectively. Assume that
 \[
  \int_{C}\id_{A}*\id_{B}(x)\,dx \ge -\delta +\int_{C'}\id_{A'}*\id_{B'}(x)\,dx.
 \]
 Then there exist homothetic ellipsoids $X$, $Y$ and\/ $Z$ in $\R^d$ such that
 \[
  |A\sdiff X|, |B\sdiff Y|, |C\sdiff Z| \le c\delta^{1/2}.
 \]
\end{thm}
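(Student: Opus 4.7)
The plan is to derive \Cref{Stab_Riesz-Sobolev_Rd} by first establishing a one-dimensional analogue with the sharp exponent $1/2$, and then propagating this to $\R^d$ via Steiner symmetrization applied in many directions, followed by a rigidity argument characterising ellipsoids.

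\textbf{Step 1 (Sharp 1-D stability).} For $d=1$, one establishes: for $A,B,C\subseteq\R$ compact and satisfying the hypotheses, if
\[
\int_C \id_A*\id_B \ge \int_{C'}\id_{A'}*\id_{B'} - \delta,
\]
then $|A\sdiff I_A|,|B\sdiff I_B|,|C\sdiff I_C|\le c\delta^{1/2}$ for intervals $I_A,I_B,I_C$ with the same measures as $A,B,C$ and with $I_A+I_B$ and $I_C$ appropriately centred. The sharpness of the exponent $1/2$ is seen from the ``split interval'' example: separating a sub-interval by a gap $\eta$ costs deficit $\Theta(\eta^2)$ while $|A\sdiff I_A| = \Theta(\eta)$. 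A natural route is via the layer-cake representation $\id_A*\id_B=\int_0^1 \id_{\{\id_A*\id_B>t\}}\,dt$: analysing the level sets, one shows that unless each level set is close to an interval the deficit grows quadratically in the distance, and joint behaviour of $A,B,C$ is controlled by a simultaneous compression argument.

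\textbf{Step 2 (Sectional reduction via Steiner symmetrization).} For a unit vector $v\in S^{d-1}$, let $S_v$ denote Steiner symmetrization in direction~$v$. The classical Riesz--Sobolev proof shows that $\int_{S_v C}\id_{S_v A}*\id_{S_v B} \ge \int_C \id_A*\id_B$, and a Fubini decomposition over hyperplanes perpendicular to $v$ writes the global deficit as an integral of 1-D deficits along lines parallel to $v$. By Markov's inequality, the 1-D deficit exceeds $\delta^{1/2}$ only on a set of lines of measure $O(\delta^{1/2})$. Invoking Step~1 on the remaining lines, and absorbing the exceptional lines into the error, yields
\[
|A\sdiff S_v A|,\ |B\sdiff S_v B|,\ |C\sdiff S_v C|\le c'\delta^{1/2},
\]
provided centres are tracked so that the symmetrised sets are consistently placed.

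\textbf{Step 3 (Many directions to an ellipsoid).} A set equal to its Steiner symmetrisation in every direction is a ball; quantitatively, a set that is $\eta$-close to such a symmetrisation in every direction is $O(\eta)$-close to an ellipsoid, once one fixes the correct $SL_d(\R)$-normalisation. The $SL_d(\R)$-invariance of the Riesz--Sobolev functional is precisely why ellipsoids, rather than only balls, appear in the conclusion. The equality case of Riesz--Sobolev forces $A,B,C$ to be \emph{homothetic} ellipsoids, and the stability version retains this: any deviation in their relative shapes contributes to the deficit, so a single linear transformation simultaneously straightens $A,B,C$ into balls of the appropriate radii.

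\textbf{Main obstacle.} The hardest step is Step~1 --- the sharp 1-D stability with exponent $1/2$. Extracting the correct exponent requires a careful simultaneous analysis of $A,B,C$, since $A$ can be far from an interval while $B$ and $C$ partially compensate. The supporting technical difficulty is Step~2: upgrading the ``almost every line'' information to a genuine $O(\delta^{1/2})$ bound on $|A\sdiff S_v A|$ without losing the exponent, which forces a careful treatment of the exceptional set of lines and of sections with near-trivial measure.
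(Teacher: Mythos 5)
You should first note that the paper contains no proof of this statement at all: \Cref{Stab_Riesz-Sobolev_Rd} is quoted verbatim from Christ \cite{Chr} (Theorem~2 there) and is used purely as a black box in the proof of \Cref{stab_Rd}. So what you have written is not an alternative to an in-paper argument but an attempted re-proof of a substantial external theorem, and as such it would need to be essentially complete to be acceptable; at present it is a sketch with genuine gaps. The most concrete one is in Step~2. The Riesz--Sobolev functional $\int_C \id_A*\id_B$ is invariant under independent translations of $A$, $B$ and $C$ (a translation of $A$ can be absorbed by translating $C$), whereas Steiner symmetrization $S_v$ is not: taking $A,B,C$ to be balls centred far from the symmetrization hyperplane gives deficit $\delta=0$ while $|A\sdiff S_vA|$ is comparable to $|A|$. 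Hence no inequality of the form $|A\sdiff S_vA|\le c'\delta^{1/2}$ can follow from the deficit alone, and the clause ``provided centres are tracked'' is not a technicality to be deferred: one must show that the centres of the near-interval sections are nearly an affine function of the transverse variable, which is the real content of such fibering arguments. Related difficulties: the fibred deficit is a function of the pair of transverse variables $(x',y')$ while closeness of a single section of $A$ depends on $x'$ alone; the one-dimensional stability theorem carries nondegeneracy hypotheses (all three measures bounded below, strict triangle-type inequality with a uniform $\eps$) that fail for thin sections near the boundary; and on the exceptional set of lines the sections can have uncontrolled one-dimensional measure, so they cannot simply be ``absorbed'' at cost $O(\delta^{1/2})$.

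Beyond that, Step~1 is itself a deep theorem (sharp one-dimensional Riesz--Sobolev stability with exponent $1/2$), which you assert via a layer-cake heuristic rather than prove, and Step~3 rests on an unproved quantitative rigidity claim: that a set $\eta$-close to its Steiner symmetral in every direction is $O(\eta)$-close to an ellipsoid (after an $SL_d(\R)$ normalisation this should be a ball, and even then the linear dependence on $\eta$ needs an argument), together with the further claim that the three approximating ellipsoids can be taken homothetic, which requires a separate argument coupling $A$, $B$ and $C$. For what it is worth, Christ's proof in \cite{Chr} does not proceed by iterating one-dimensional stability over directions; it builds on earlier qualitative/compactness results to reduce to configurations already close to ellipsoids and then carries out a second-order perturbative analysis of the functional near the extremizing triples. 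If you want to use this theorem in the present paper, the correct move is the one the authors make: cite it.
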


The main new ingredient for us is the following result.

\begin{thm}\label{Main_ingredient_Rd}
 For $\eps>0$ and\/ $d,n\in\N$ there exists $\tau=\tau(\eps,d,n)$ such that the following holds.
 Let\/ $A$ and\/ $B$ be compact sets in $\R^d$ with $2\eps \le |A|^{1/d},|B|^{1/d} \le 1$. Assume that
 \[
  \int_{C}\id_{A}*\id_{B}(x)\,dx \le -\alpha +\int_{C'}\id_{A'}*\id_{B'}(x)\,dx
 \]
 for every compact set\/ $C$ in $\R^d$ with $\eps \le |C|^{1/d} \le 2$ and
 \[
  2\max(|A|^{1/d},|B|^{1/d},|C|^{1/d}) \le |A|^{1/d}+|B|^{1/d}+|C|^{1/d}-\eps.
 \]
 Then 
 \[
  \int_{\R^d} 1-\bigg(1-\frac{\id_{A}*\id_{B}(x)}{|B|}\bigg)^n dx
  \ge \tau\alpha +\int_{\R^d} 1-\bigg(1-\frac{\id_{A'}*\id_{B'}(x)}{|B'|}\bigg)^n dx
 \]
\end{thm}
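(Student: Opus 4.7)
My plan is to follow closely the proof of \Cref{Main_ingredient}, replacing the torus Riesz--Sobolev inequality by \Cref{Riesz-Sobolev_Rd} and the power $u\mapsto u^n$ by the increasing concave function $\phi(u):=1-(1-u/|B|)^n$. Writing $h:=\id_A*\id_B$ and $h':=\id_{A'}*\id_{B'}$, the target inequality reads $\int_{\R^d}\phi(h)\,dx\ge\tau\alpha+\int_{\R^d}\phi(h')\,dx$.

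First I would let $f,g\colon[0,\infty)\to[0,\infty)$ be the one-dimensional decreasing rearrangements of $h$ and $h'$, and set $F(t):=\int_0^t f$, $G(t):=\int_0^t g$. Applying \Cref{Riesz-Sobolev_Rd} with $C$ a super-level set of $h$ of volume $t$ (and $C'$ the origin-centred ball of volume $t$) gives $F(t)\le G(t)$ for every $t\ge0$, with $F(\infty)=G(\infty)=|A|\,|B|$. For $t$ in the interval
\[
I:=[a,b]=\bigl[(\,\bigl||A|^{1/d}-|B|^{1/d}\bigr|+\eps)^d,\,(|A|^{1/d}+|B|^{1/d}-\eps)^d\bigr],
\]
the set $C_t$ satisfies the triangle-type constraint in the hypothesis (using $2\eps\le|A|^{1/d},|B|^{1/d}\le 1$), so applying the hypothesis with $C=C_t$ yields the strengthened bound $F(t)\le G(t)-\alpha$ throughout $I$. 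Because $h'$ is the convolution of two centred balls, its radial profile $\rho(r)=h'(x)$ (with $|x|=r$) is smooth and strictly decreasing on the open annulus where $0<h'<\min(|A|,|B|)$; reparametrising by enclosed volume yields $g$, and a direct cap-volume computation produces constants $\gamma=\gamma(\eps,d)>0$ and $c_3=c_3(\eps,d)>0$ with $-g'(t)\ge\gamma$ on $I$ and with $u_a:=1-g(a)/|B|\ge c_3$.

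The algebraic identity $u^n-v^n=(u-v)\sum_{k=0}^{n-1}u^{n-1-k}v^k$, applied with $u=1-f/|B|$ and $v=1-g/|B|$, gives
\[
\phi(g(x))-\phi(f(x))=(g(x)-f(x))\,H(x),\qquad H(x):=\frac{1}{|B|}\sum_{k=0}^{n-1}(1-f(x)/|B|)^{n-1-k}(1-g(x)/|B|)^k.
\]
Since $f,g$ are non-negative and decreasing, each summand in $H$ is non-decreasing in $x$, hence so is $H$. Using equimeasurability (together with $\phi(0)=0$) to pass from integrals over $\R^d$ to integrals of $\phi(f),\phi(g)$ over $[0,\infty)$, and then Lebesgue--Stieltjes integration by parts using $(G-F)(0)=(G-F)(\infty)=0$, I obtain
\[
\int_{\R^d}\phi(h')\,dx-\int_{\R^d}\phi(h)\,dx=\int_0^\infty(g-f)H\,dx=-\int_0^\infty(G-F)\,dH,
\]
which is $\le-\int_I(G-F)\,dH\le-\alpha(H(b)-H(a))$, yielding $\int_{\R^d}\phi(h)\,dx-\int_{\R^d}\phi(h')\,dx\ge\alpha(H(b)-H(a))$.

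The hard part is then bounding $H(b)-H(a)$ from below by a constant $\tau(\eps,d,n)>0$: a priori $H$ could be nearly constant on $[a,b]$ if $f$ and $g$ were. To get around this I isolate the $k=n-1$ term of $H$, which depends only on the explicit function $g$, and use convexity of $u\mapsto u^{n-1}$:
\[
H(b)-H(a)\ge\frac{(1-g(b)/|B|)^{n-1}-(1-g(a)/|B|)^{n-1}}{|B|}\ge\frac{(n-1)u_a^{n-2}}{|B|^2}(g(a)-g(b)).
\]
Combined with $g(a)-g(b)\ge\gamma(b-a)$, the mean value bound $b-a\ge 2d\eps^d$ (obtained by applying the MVT to $u\mapsto u^d$ using that $(|A|^{1/d}+|B|^{1/d}-\eps)-(||A|^{1/d}-|B|^{1/d}|+\eps)\ge2\eps$), the positivity $u_a\ge c_3$, and $|B|\le 1$, this yields the required $\tau=\tau(\eps,d,n)>0$. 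The main obstacle is therefore precisely this final uniform lower bound on $H(b)-H(a)$: it forces a cap-volume estimate showing that $g(a)$ stays quantitatively below $\min(|A|,|B|)$, and both this estimate and the bound on $-g'$ rely crucially on the $+\eps$ slack built into the definition of $I$.
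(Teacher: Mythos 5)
Your proposal is correct and follows essentially the same route as the paper: pass to one-dimensional decreasing rearrangements $f,g$, use the Riesz--Sobolev inequality (\Cref{Riesz-Sobolev_Rd}) to get $F\le G$ (with $F(\infty)=G(\infty)=|A||B|$), use the hypothesis to get $F\le G-\alpha$ on the window $I$, factor $\phi(g)-\phi(f)=(g-f)H$ via the telescoping identity for $u^n-v^n$, integrate by parts against the nondecreasing function $H$, and finally show $H(b)-H(a)\ge\tau>0$. The paper packages the integration-by-parts step as a separate lemma (\Cref{cont_arbitrary_functions_Rd}) while you do it inline, but the mechanism is identical, and both proofs isolate the same $g$-only term $(1-g/|B|)^{n-1}$ of $H$.

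The one place you diverge is the final lower bound on $(1-g(b)/|B|)^{n-1}-(1-g(a)/|B|)^{n-1}$. You invoke convexity at the smaller argument, giving a factor $(n-1)u_a^{n-2}$ with $u_a=1-g(a)/|B|$, and you then correctly flag that you must additionally prove $u_a\ge c_3(\eps,d)>0$ via a cap-volume estimate. That estimate does hold (because $a=(\rho_-+\eps)^d$ already sits a definite distance past $\rho_-^d$, and the radii of $A',B'$ are bounded below by a multiple of $\eps$), so your argument goes through, but it is an extra ingredient the paper avoids. The paper instead uses the elementary bound $u^{n-1}-v^{n-1}=(u-v)\sum_{i}u^iv^{n-2-i}\ge u^{n-2}(u-v)$ at the \emph{larger} argument $u=1-g(b)/|B|$, and the needed lower bound $u\ge\gamma|I|$ follows for free from the derivative estimate $g'\le-\gamma$ that you already have (since $g(b)\le g(a)-\gamma(b-a)\le 1-\gamma(b-a)$). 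Switching to this bound would let you delete the $c_3$ cap-volume step entirely and would make your $\tau$ match the paper's $(\gamma\eps^d)^{n-1}$ type constant.

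One small caveat, shared by the paper and not specific to you: for $n=1$ the conclusion (as stated, with $\tau>0$) cannot hold, since both sides then equal $|A|$; both your bound on $H(b)-H(a)$ and the paper's degenerate to $0$ in that case. Implicitly $n\ge 2$ is needed.
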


We shall derive this result from a general result about arbitrary functions. 

\begin{thm}\label{cont_arbitrary_functions_Rd}
 Let\/ $f,g \colon \R_+ \to [0,1]$ be continuous decreasing functions with bounded support with
 \[
  \int_0^\infty f(x)\,dx = \int_0^\infty g(x)\,dx
 \]
 and 
 \[
  \int_0^t f(x)\,dx \le \int_0^t g(x)\,dx
 \]
 for all $t\ge 0$. Assume that there exists an interval\/ $I \subseteq \R_+$
 with $g'(t)=-\gamma$ and
 \[
  \int_0^t f(x)\,dx \le -\beta+\int_0^t g(x)\,dx
 \]
 for all\/ $t\in I$. Then
 \[
  \int_0^\infty 1-(1-f(x))^n dx \ge (\gamma|I|)^{n-1}\beta+\int_0^\infty 1-(1-g(x))^n dx.
 \]
\end{thm}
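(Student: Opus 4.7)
The plan is to adapt the integration-by-parts argument from the preceding \Cref{cont_arbitrary_functions}, but with the function $x\mapsto 1-(1-x)^n$ in place of $x\mapsto x^n$. First I would rewrite the target inequality as
\[
 \int_0^\infty\bigl[(1-g(x))^n-(1-f(x))^n\bigr]dx\ge(\gamma|I|)^{n-1}\beta,
\]
and then factor using the identity $a^n-b^n=(a-b)\sum_{k=0}^{n-1}a^kb^{n-1-k}$ with $a=1-g$ and $b=1-f$. This gives $(1-g)^n-(1-f)^n=(f-g)\,h$ where
\[
 h(x):=\sum_{k=0}^{n-1}(1-g(x))^k(1-f(x))^{n-1-k}.
\]
Since $f,g$ are decreasing, $1-f$ and $1-g$ are increasing, and each summand of $h$ is a product of nonnegative nondecreasing functions; hence $h$ itself is nondecreasing.

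Next, set $E(x):=G(x)-F(x)$, so that $dE=(g-f)\,dx$. The hypotheses give $E(0)=E(\infty)=0$, $E\ge0$ everywhere, and $E\ge\beta$ on $I$. Integration by parts (in the Lebesgue--Stieltjes form, to avoid needing $h$ to be $C^1$) yields
\[
 \int_0^\infty (f-g)\,h\,dx=-\int_0^\infty h\,dE=\int_0^\infty E\,dh,
\]
with the boundary terms $[Eh]_0^\infty$ vanishing. Since both $E$ and the measure $dh$ are nonnegative, this integral is bounded below by its contribution from $I=(a,b)$, namely $\beta\bigl(h(b)-h(a)\bigr)$.

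It remains to show $h(b)-h(a)\ge(\gamma|I|)^{n-1}$. Each summand of $h$ is nondecreasing, so I may keep only the $k=n-1$ term, which equals $(1-g(x))^{n-1}$. On $I$ we have $g'=-\gamma$, so $(1-g(b))-(1-g(a))=\gamma|I|$, and in particular $1-g(b)\ge\gamma|I|$. Using the elementary bound $y^{n-1}-x^{n-1}\ge(y-x)y^{n-2}$ for $y\ge x\ge 0$ with $y=1-g(b)$ and $x=1-g(a)$ gives
\[
 (1-g(b))^{n-1}-(1-g(a))^{n-1}\ge\gamma|I|\cdot(1-g(b))^{n-2}\ge(\gamma|I|)^{n-1},
\]
as required. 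Putting the chain together completes the proof.

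The only real technical point is the integration by parts when $h$ is merely of bounded variation rather than $C^1$; but since $h$ is monotone and $E$ is absolutely continuous with $E(0)=E(\infty)=0$, the Stieltjes form applies cleanly. Everything else is a direct parallel of the earlier argument for $x^n$, with the key observation being that the single term $(1-g)^{n-1}$ inside $h$ is already enough to produce the factor $(\gamma|I|)^{n-1}$ on $I$.
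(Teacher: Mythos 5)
Your proof is correct and follows essentially the same route as the paper: integration by parts against the monotone weight $h=\sum_k(1-g)^k(1-f)^{n-1-k}$, discard everything outside $I$, and bound $h(b)-h(a)$ below by $(1-g(b))^{n-1}-(1-g(a))^{n-1}\ge(\gamma|I|)^{n-1}$. Your use of the Lebesgue--Stieltjes form is a mild improvement in rigor, since the paper's version nominally assumes $h$ is continuously differentiable even though $f$ and $g$ are only assumed continuous.
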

\begin{proof}
Define $F(t)=\int_0^t f(x)dx$ and $G(t)=\int_0^t g(x)dx$ and assume $h\colon \R_+\to[0,\infty)$
is increasing and continuously differentiable. Then using integration by parts and choosing
$T$ sufficiently large so that $f(T)=g(T)=0$,
\begin{align*}
 \int_0^T (g(x)-f(x))h(x)dx
 &=(G(x)-F(x))h(x)\Big|_0^T-\int_0^T (G(x)-F(x))h'(x)dx\\
 &\le -\beta \int_I h'(x)=-\beta(h(b)-h(a)),
\end{align*}
where $I=[a,b]$ and we have used that 
$F(0)=G(0)$, $F(T)=G(T)$, $G(x)-F(x)\ge 0$ and $G(x)-F(x)\ge\beta$ on~$I$.

Apply this to the function $h(x)=(1-g(x))^{n-1}+(1-g(x))^{n-2}(1-f(x))+\dots+(1-f(x))^{n-1}$
and note that $h$ is increasing and $g(b)\le g(a)- \gamma(b-a) \le 1- \gamma(b-a) $, and so
\[
 h(b)-h(a)\ge (1-g(b))^{n-1}-(1-g(a))^{n-1}\ge (1-g(b))^{n-2}(g(a)-g(b))\ge (\gamma(b-a))^{n-1}.
\]
We deduce that
\begin{align*}
 \int_0^\infty 1-(1-g(x))^ndx-\int_0^\infty 1-(1-f(x))^ndx
 &=\int_0^\infty(g(x)-f(x))h(x)\,dx\\
 &\le -\beta(h(b)-h(a))\le - (\gamma |I|)^{n-1}\beta.
 \qedhere
\end{align*}
\end{proof}

\begin{proof}[Proof of \Cref{Main_ingredient_Rd}]
Given a continuous function $h \colon \R^d \to [0,1]$ with bounded support, as before, the
\emph{decreasing rearrangement} of $h$ is the unique
continuous decreasing function $\tilde h\colon [0,\infty) \to [0,1]$
with bounded support such that for every $\lambda \in [0,1]$ 
\[
 |\{x \colon h(x) \ge \lambda\}|=|\{x \colon \tilde h(x) \ge \lambda\}|.
\]

Note that $\id_{A}*\id_{B}/|B| \colon \R^d \to [0,1]$ and $\id_{A'}*\id_{B'}/|B'| \colon \R^d \to [0,1]$ are
continuous functions with bounded support. Let $f$ and $g$ be the decreasing rearrangements of
$\id_{A}*\id_{B}/|B|$ and $\id_{A'}*\id_{B'}/|B'|$, respectively. 

Let $v_d$ be the volume of the unit ball in $\R^d$; thus, $v_dr^d$ is the volume of the ball
of radius $r$ in $\R^d$. Note that $A'$ and $B'$ are balls centred at the origin of radii
$v_d^{-1/d}|A|^{1/d}$ and $v_d^{-1/d}|B|^{1/d}$, respectively.

Now $\id_{A'}*\id_{B'}/|B'| \colon \R^d \to [0,1]$ is spherically symmetrical
and $\id_{A'}*\id_{B'}(x)/|B'| \le \id_{A'}*\id_{B'}(y)/|B'|$ if $|x| \le |y|$.
Thus
\[
 g(v_d|x|^d)=\frac{\id_{A'}*\id_{B'}(x)}{|B'|}.
\]
Moreover, $g(x)=\min(|A|,|B|)/|B|$ for $x\le \rho_-^d$, where $\rho_-:=||A|^{1/d}-|B|^{1/d}|$, and $g(x)=0$ for
$x\ge \rho_+^d$, where $\rho_+:=|A|^{1/d}+|B|^{1/d}$, and $g$ has a continuous negative derivative
for $\rho_-^d<x<\rho_+^d$.

Set
\[
 I=[\rho_-+\eps, \rho_+-\eps]
\]
and
\[
 J=[(\rho_-+\eps)^d, (\rho_+-\eps)^d]
\]
Recall that $2\eps \le |A|^{1/d},|B|^{1/d} \le 1$ so that
$|I|\ge \eps$, $I \subseteq [\eps,2]$, $|J| \ge \eps^d$ and $J \subseteq [\eps^d, 2^d]$.
Also, there exists a $\gamma>0$ depending on $d$ and $\eps$ such that for $x \in J$
\[
 g'(x) \le -\gamma.
\]

For $t \in \R_+$ let $C'_t$ be the ball centred at the origin in $\R^d$ with volume~$t$.
Now as $\id_{A'}*\id_{B'}(x)$ is a decreasing function of~$|x|$, it follows that
\[
 \int_{C_t'}\frac{\id_{A'}*\id_{B'}(x)}{|B'|}\,dx = \int_0^t g(x)\,dx.
\]

Since $f$ is a decreasing rearrangement of $\id_{A}*\id_{B}/|B|$, it follows that for
$t\in\R_+$ there exists a compact set $C_t$ in $\R^d$ with $|C_t|=t$ such that 
\[
 \int_{C_t}\frac{\id_{A}*\id_{B}(x)}{|B|}\,dx = \int_0^t f(x)\,dx.
\]

By the definition of decreasing rearrangement, we have
\[
 \int_0^\infty f(x)\,dx
 =\int_{\R^d}\frac{\id_{A}*\id_{B}(x)}{|B|}\,dx
 =|A|=|A'|
 =\int_{\R^d}\frac{\id_{A'}*\id_{B'}(x)}{|B'|}\,dx
 =\int_0^\infty g(x)\,dx.
\]

For $t \in \R_+$, $C'_t$, $A'$ and $B'$ are centred balls with the same size as the
compact sets $C_t$, $A$ and $B$ in~$\R^d$, respectively, so \Cref{Riesz-Sobolev_Rd} implies 
\[
 \int_0^t f(x)\,dx
 =\int_{C_t}\frac{\id_{A}*\id_{B}(x)}{|B|}\,dx
 \le \int_{C'_t}\frac{\id_{A'}*\id_{B'}(x)}{|B'|}\,dx
 = \int_0^t g(x)\,dx.
\]

Moreover, for $t \in J$, the compact set $C_t$ in $\R^d$ has size $t$ so $|C_t|^{1/d} \in I$.
By the definition of~$I$, $\eps \le |C_t|^{1/d} \le 2$ and
$2\max(|A|^{1/d},|B|^{1/d},|C_t|^{1/d}) \le |A|^{1/d}+|B|^{1/d}+|C_t|^{1/d}-\eps$.
By hypothesis, for $t \in J$ we have
\[
 \int_0^t f(x)\,dx
 =\int_{C_t}\frac{\id_{A}*\id_{B}(x)}{|B|}\,dx
 \le -\frac{\alpha}{|B|}+\int_{C_t'}\frac{\id_{A'}*\id_{B'}(x)}{|B'|}\,dx
 = -\frac{\alpha}{|B|} + \int_0^t g(x)\,dx.
\]

Using the last three centred inequalities together with the fact that $|B| \le 1$, $|J| \ge \eps^d$
and $g'(t)\le -\gamma$ for $t \in J$, \Cref{cont_arbitrary_functions_Rd} gives
\[
 \int_0^\infty 1-(1-f(x))^n dx \ge (\gamma \eps^d)^n\alpha+\int_0^\infty 1-(1-g(x))^n dx.
\]

As $f$ and $g$ are decreasing rearrangements of $\id_{A}*\id_{B}/|B|$ and $\id_{A'}*\id_{B'}/|B'|$,
respectively, we deduce that $1-(1-f)^n$ and $1-(1-g)^n$ are decreasing rearrangements of
$1-\big(1-\frac{\id_{A}*\id_{B}(x)}{|B|}\big)^n$ and $1-\big(1-\frac{\id_{A'}*\id_{B'}(x)}{|B|}\big)^n$,
respectively. Hence
\[
 \int_{\R^d}1-\bigg(1-\frac{\id_{A}*\id_{B}(x)}{|B|}\bigg)^n dx
 \ge (\gamma \eps^d)^n\alpha +
 \int_{\R^d}1-\bigg(1-\frac{\id_{A'}*\id_{B'}(x)}{|B'|}\bigg)^n dx
\]
The conclusion follows with $\tau=(\gamma \eps^d)^n$.
\end{proof}

\begin{proof}[Proof of \Cref{stab_Rd}]
Let $c=c(\eps^{1/d}/2,d)$ be the constant given by \Cref{Stab_Riesz-Sobolev_Rd} and
$\tau=\tau(\eps^{1/d}/2,d,n)$ the constant given by \Cref{Main_ingredient_Rd}.
Set $K=c\tau^{-1/2}$. By approximating measurable sets with compact sets,
we can assume $A$ and $B$ are compact sets in~$\R^d$. From the hypothesis, we know that
\[
 \E(|\medcup_i (A+b_i)|) \le \delta + \E(|\medcup_i (A'+b'_i)|).
\]
This is equivalent to
\[
 \int_{\R^d}\Prb\big(x\in\medcup_i (A+b_i)\big)\,dx
 \le \delta + \int_{\R^d}\Prb\big(x\in\medcup_i (A'+b'_i)\big)\,dx,
\]
which can be expressed as
\[
 \int_{\R^d} 1-\bigg(1-\frac{|(x-A)\cap B|}{|B|}\bigg)^n dx
 \le \delta + \int_{\R^d} 1-\bigg(1-\frac{|(x-A')\cap B'|}{|B'|}\bigg)^n dx,
\]
or in other words
\[
 \int_{\R^d} 1-\bigg(1-\frac{\id_{A}*\id_{B}(x)}{|B|}\bigg)^n dx
 \le \delta + \int_{\R^d} 1-\bigg(1-\frac{\id_{A'}*\id_{B'}(x)}{|B'|}\bigg)^n dx.
\]

Recall that $A'$ and $B'$ are balls centred at the origin in $\R^d$ with the same size as the
compact sets $A$ and~$B$, respectively. Moreover, $\eps^{1/d} \le |A|^{1/d}, |B|^{1/d} \le 1$.

By \Cref{Main_ingredient_Rd} with parameters $\eps^{1/d}/2$, $d$, $n$ and
$\alpha = \tau^{-1}\delta$ applied to the sets $A$ and~$B$, it follows that there exists
a compact set $C$ in $\R^d$ with $\eps^{1/d}/2 \le |C|^{1/d} \le 2$ and
$2\max(|A|^{1/d},|B|^{1/d},|C|^{1/d}) \le |A|^{1/d}+|B|^{1/d}+|C|^{1/d}-\eps^{1/d}/2$ such that,
if $C'$ is a centred ball in $\R^d$ with the same size as~$C$, then
\[
 \int_C\id_{A}*\id_{B}(x)\,dx \ge -\tau^{-1}\delta + \int_{C'}\id_{A'}*\id_{B'}(x)\,dx.
\]

By \Cref{Stab_Riesz-Sobolev_Rd} with parameters $\eps^{1/d}/2$ and $d$ applied to the compact sets
$A$, $B$ and $C$ in $\R^d$, there exists homothetic ellipsoids $X$, $Y$, $Z$ in $\R^d$ such that
\[
 |A\sdiff X|, |B\sdiff Y|, |C\sdiff Z| \le c \tau^{-1/2}\delta^{1/2} = K \delta^{1/2}.
 \qedhere
\]
\end{proof}

\section{Open problems}

Perhaps the most interesting question that leads on from our results is to ask what would happen if we chose points uniformly at random not from the whole of $B$ but just from its
boundary (whether we are in Euclidean space or the torus). There are two different natural ways to quantify this
problem. For a given $\eps>0$, we could choose uniformly from the set $(1+\eps)B - B$, or we could
choose uniformly from $B_\eps - B$, where $B_\eps$ denotes the set of all points at distance at most
$\eps$ from~$B$. The difference between these two lies in how they `bias' the boundary measure towards the actual
shape and curvature of the boundary -- we feel that both forms are very natural for this problem.

In a different direction, what happens if we use $n$ random points in $A$ and in $B$ (in $\R^d$) and take the convex hull?
Is it still true that Euclidean balls minimize the expected value of this quantity? Here $A$ and $B$
would be convex sets of given volumes.

In the results in the paper, we have formed the sumset of $A$ with $n$ random points of~$B$. But what would
happen if, instead of \emph{points}, we used random copies of a fixed body? For example, what would happen
if we used unit intervals, with centres chosen uniformly at random from $B$ but with orientations also chosen at random? To be more precise, given our sets
$A$ and $B$ in $\R^d$, we first
choose $n$ points uniformly at random from $B$ and then for each of these points we take a unit interval centred at
this point with direction chosen uniformly at random -- 
we wish to minimize the expected volume of the sum of
$A$ with the union of these $n$ intervals.

One could also allow this `extra' body (above, the unit
interval) to vary not by a random rotation but rather by being related to~$B$:
perhaps the most natural example would be if it were a homothetic copy of~$B$. Thus one could
ask for example the following question: given convex sets $A$ and $B$ in $\R^d$ of given volumes, how do we minimize
the expected volume of the sumset of $A$ with $n$ random copies of $B/2$, chosen uniformly at random from
the copies of $B/2$ inside~$B$? In an equivalent, but
perhaps less attractive, formulation, this is asking to
minimize the sum of $A+B$ with $n$ random points of $B$.

\end{document}